\documentclass[11pt]{amsart}

\usepackage{amsmath}
\usepackage{amssymb}
\usepackage{url}
\usepackage{amscd}

\usepackage{bbm}
\usepackage{dsfont}
\usepackage{yfonts}
\usepackage{mathrsfs}
\usepackage{mathtools}
\mathtoolsset{showonlyrefs,showmanualtags}

\usepackage[backref=page]{hyperref} 
\hypersetup{
    colorlinks=true,       
    linkcolor=blue,        
    citecolor=magenta,       
    filecolor=magenta,    
    urlcolor=cyan          
}

 \newtheorem{theorem}{Theorem}[section]

 \newtheorem{corollary}[theorem]{Corollary}
 \newtheorem{lemma}[theorem]{Lemma}
 \newtheorem{proposition}[theorem]{Proposition}

 \theoremstyle{definition}
 
 \theoremstyle{remark}
 \newtheorem{remark}[theorem]{Remark}
 \newtheorem*{remark_v2}{Remark}
 
 \numberwithin{equation}{section}

\def\C{\mathbb C}

\def\D{{\mathbb D}}

\def\i{\textbf{i}}

\def\I{{1\!\!1}}

\begin{document}

\title[Volterra-type inner derivations]{Volterra-type inner derivations\\ on Hardy spaces}

\author[H. Arroussi]{H. Arroussi}
\address{\newline Hicham Arroussi \newline Department of Mathematics and Statistics, University of Helsinki, Finland
\newline Department of Mathematics and Statistics, University of Reading, England}

\email{arroussihicham@yahoo.fr, h.arroussi@reading.ac.uk}

\author[C. Tong]{C. Tong}
\address{\newline Cezhong Tong\newline Department of Mathematics,
	Hebei University of Technology, Tianjin 300401, P.R. China.}
\email{ctong@hebut.edu.cn,
	cezhongtong@hotmail.com}

\author[J. A. Virtanen]{J. A. Virtanen}

\address{\newline Jani A. Virtanen\newline Department of Physics and Mathematics, University of Eastern Finland\newline Department of Mathematics and Statistics, University of Reading, England\newline Department of Mathematics and Statistics, University of Helsinki, Finland}
\email{jani.virtanen@uef.fi, j.a.virtanen@reading.ac.uk, \newline jani.virtanen@helsinki.fi}

\author[Z. Yuan]{Z. Yuan}

\address{\newline Zixing Yuan\newline School of Mathematics and Statistics,
Wuhan University,  Wuhan 430072, P.R. China.}

\email{zxyuan.math@whu.edu.cn}

\begin{abstract}
A classical result of Calkin [Ann.~of Math.~(2) 42 (1941), pp.~839--873] says that an inner derivation $S\mapsto [T,S] = TS-ST$ maps the algebra of bounded operators on a Hilbert space into the ideal of compact operators if and only if $T$ is a compact perturbation of the multiplication by a scalar. In general, an analogous statement fails for operators on Banach spaces. To complement Calkin's result, we characterize Volterra-type inner derivations on Hardy spaces using generalized area operators and compact intertwining relations for Volterra and composition operators. Further, we characterize the compact intertwining relations for multiplication and composition operators between Hardy and Bergman spaces.
\end{abstract}

\subjclass[2020]{47B47; 32A35; 32A36; 47B38; 47B33.} 
\keywords{Volterra-type inner derivation; Hardy space; composition operator; area operator; compact intertwining relation.}

\maketitle

\section{Introduction}
Let $\mathscr A$ be a Banach algebra over the complex field. A linear map $D: \mathscr A\to \mathscr A$ is a derivation if $D(xy)=xD(y)+D(x)y$ for all $x,y\in\mathscr A$. Over the last half century, there have been plenty of results giving conditions on a derivation of a Banach algebra implying that its range is contained in some ideal. One of the most famous results given by Singer and Wermer \cite[Theorem 1]{SW} says that every continuous derivation of a commutative Banach algebra maps into the Jacobson radical of the algebra. Previously Calkin \cite[Theorem 2.9]{Ca} proved that an \emph{inner derivation} $X\mapsto [T,X]:=TX-XT$ maps the algebra of all bounded operators on a Hilbert space to the ideal of all compact operators if and only if $T$ is a compact perturbation of a scalar operator. Notice that this conclusion fails to hold true on the Banach spaces in general (see \cite[p.~288]{ST}). In this paper, we are interested in Volterra-type inner derivations on Hardy spaces, and, in particular, give characterizations which complement and in a sense extend some aspects of Calkin's work to the algebras of bounded linear operators on Hardy spaces.

To state our main results, we recall some basic definitions. Let $H(\mathbb{D})$ denote the class of all analytic functions in the unit disk $\mathbb{D}$ of the complex plane $\C$ and let $S(\mathbb{D})$ be the collection of all analytic self-maps of $\mathbb{D}.$ 

For $0<p<\infty$, the Hardy space $\mathcal{H}^{p}$ is defined to be the Banach space of all analytic functions $f$ in $\mathbb{D}$ with
$$
\|f\|_{\mathcal{H}^{p}}:=\left(\sup _{0<r<1} \frac{1}{2 \pi} \int_{0}^{2 \pi}\left|f\left(r e^{i \theta}\right)\right|^{p} \mathrm{d} \theta\right)^{1 / p}<\infty .
$$

For $0<p<\infty$ and $\alpha>-1$, the weighted Bergman space $\mathcal A_{\alpha}^{p}(\mathbb{D})$ consists of all analytic functions $f$ in $\mathbb{D}$ for which
$$
\|f\|_{\mathcal A_{\alpha}^{p}}:=\left(\int_{\mathbb{D}}|f(z)|^{p} \mathrm{~d} A_{\alpha}(z)\right)^{\frac{1}{p}}<\infty,
$$
where $\mathrm{d} A_{\alpha}(z)=(1+\alpha)\left(1-|z|^{2}\right)^{\alpha} \mathrm{d} A(z)$ and $\mathrm{d}A(z) =\mathrm{d}x \mathrm{d}y/\pi$ is the normalized area measure.

For $a \in \mathbb{D}$, the M\"{o}bius map $\psi_{a}$ of the disk that interchanges $z$ and 0 is defined by
$$
\psi_{a}(z)=\frac{a-z}{1-\bar{a} z}, \quad z \in \mathbb{D}.
$$
It is well known that
\begin{align}\label{b}
1-\left|\psi_{a}(z)\right|^{2}=\frac{\left(1-|a|^{2}\right)\left(1-|z|^{2}\right)}{|1-\bar{a} z|^{2}}.
\end{align}
Let $\mathrm{Aut}(\mathbb{D})$ denote the automorphism group of $\mathbb{D}$. It is well known in elementary complex analysis that every $\psi \in \mathrm{Aut}(\mathbb{D})$ has the form
$$
\psi(z)=e^{i\theta} \psi_{a}(z), \quad \theta \in [0, 2\pi),\ a \in \mathbb{D}.
$$
The space of analytic functions on $\mathbb{D}$ of bounded mean oscillation, denoted by $\mathcal{B M O A}$,        consists of functions $f$ in $\mathcal{H}^{2}$ such that
$$
\|f\|_{{\mathcal{B M O A}}}^2=|f(0)|^2+\sup_{I} \frac{1}{|I|} \int_I\left|f(\theta)-f_I\right|^2 d\theta<+\infty,
$$
where
$$
f_I=\frac{1}{|I|} \int_I f(\theta)\, d\theta
$$
is the length of $I$. The closure in $\mathcal{B M O A}$, of the set of all polynomials is called $\mathcal{V M O A}$. By \cite{Zhu}, we know that $f \in \mathcal{B M O A}$ if and only if
\begin{align}\label{v}
\sup _{a \in \mathbb{D}} \int_{\mathbb{D}}(1-\left|\psi_{a}(z)\right|^{2})| f^{\prime}(z)|^{2} \mathrm{d} A(z)<\infty,
\end{align}
and $f \in \mathcal{V M O A}$ if and only if
\begin{align}\label{c}
\lim _{|a| \rightarrow 1} \int_{\mathbb{D}}(1-\left|\psi_{a}(z)\right|^{2})| f^{\prime}(z)|^{2} \mathrm{d} A(z)=0 .
\end{align}

For $f \in H(\mathbb{D})$, every $\varphi \in S(\mathbb{D})$ induces a composition operator
 $C_{\varphi}$ by $C_{\varphi} f=f \circ \varphi.$ If $\varphi(z)=e^{\i \theta}z$ for $\theta\in[0,2\pi]$, we
 call $C_\varphi$ a rotation composition operator. The rotation composition operators play a crucial role in the proofs of Theorems \ref{567} and \ref{2345}. The boundedness and compactness of composition 
 operators on various analytic function spaces have been studied intensively in the past few decades (see, e.g., \cite{Cowen} and \cite{JS}).

For $g \in H(\mathbb{D})$, the Volterra-type operators $J_{g}$ and $I_g$ are defined by
$$
J_{g} f(z)=\int_{0}^{z} f(\zeta) g^{\prime}(\zeta)\, \mathrm{d} \zeta\quad\mbox{and}\quad I_{g} f(z)=\int_{0}^{z} f^{\prime}(\zeta) g(\zeta)\, \mathrm{d} \zeta
$$
for $z \in \mathbb{D}$ and $f \in H(\mathbb{D})$.  The operators $J_{g}$ and $I_{g}$ are
close companions because of their relations to the multiplication operator $M_{g} f(z)=$ $g(z) f(z)$.
To see this, use integration by parts to obtain
$$
M_{g} f=f(0) g(0)  +J_{g} f+I_{g} f.
$$
The discussion of Volterra-type operators $J_g$ and $I_g$ first
arose in connection with semigroups of composition operators---for further details and background, see \cite{AR} and also \cite{A, AGV} for these types of operators acting on weighted Bergman spaces.

Let $\mathscr B(\mathcal{H}^p)$ be the Banach algebra of bounded linear operators on the Hardy space $\mathcal{H}^p$, where $0<p<\infty.$
The two classes of \emph{Volterra-type inner derivations} $D(J_g)$ and $D(I_g)$ induced by $g\in H(\mathbb D)$ on $\mathscr B(\mathcal{H}^p)$ are defined by
$$
	D(J_{g}): \mathscr B(\mathcal{H}^p)\rightarrow \mathscr B(\mathcal{H}^p),\; T\mapsto [J_{g},T]
$$
(referred to as the $J_g$ inner derivation) and
$$
	D(I_{g}): \mathscr B(\mathcal{H}^p)\rightarrow \mathscr B(\mathcal{H}^p),\; T\mapsto [I_{g},T]
$$
(referred to as the $I_g$ inner derivation).	

\medskip

We can now state our main results.

\begin{theorem}\label{thm-a}
For $0<p<\infty$, the $J_{g}$ inner derivation $D(J_{g})$ on $\mathscr B(\mathcal{H}^p)$ maps into the ideal of compact operators if and only if $g$ belongs to $\mathcal{VMOA}$.
\end{theorem}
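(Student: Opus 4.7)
\emph{Sufficiency.} If $g\in\mathcal{VMOA}$, the classical Aleman--Siskakis theorem guarantees that $J_g$ is compact on $\mathcal{H}^p$; hence $D(J_g)(T)=J_gT-TJ_g$ is a difference of products of a compact operator and a bounded operator, and so is compact for every $T\in\mathscr B(\mathcal{H}^p)$.

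\emph{Necessity.} Assume $D(J_g)$ maps $\mathscr B(\mathcal{H}^p)$ into the ideal of compact operators. Since $D(J_g)$ is defined on all of $\mathscr B(\mathcal{H}^p)$, the operator $J_g$ must be bounded and so $g\in\mathcal{BMOA}$. As the paper signals, the plan is to probe $D(J_g)$ with the rotation composition operators $C_\lambda f(z)=f(\lambda z)$, $\lambda\in\bS$, which are isometric automorphisms of $\mathcal{H}^p$ with inverse $C_{\bar\lambda}$. A direct change of variable $\zeta=\lambda w$ in the defining integral of $J_g$ yields
$$
[J_g,C_\lambda]f(z)=\int_0^z f(\lambda\zeta)\bigl[g'(\zeta)-\lambda\,g'(\lambda\zeta)\bigr]\,\d\zeta=J_{g-g\circ R_\lambda}\bigl(C_\lambda f\bigr)(z),
$$
where $R_\lambda(z)=\lambda z$, i.e.\ $[J_g,C_\lambda]=J_{g-g\circ R_\lambda}\,C_\lambda$. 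The invertibility of $C_\lambda$ together with the assumed compactness of $[J_g,C_\lambda]$ then force $J_{g-g\circ R_\lambda}$ to be compact on $\mathcal{H}^p$, and Aleman--Siskakis yields $g-g\circ R_\lambda\in\mathcal{VMOA}$ for every $\lambda\in\bS$.

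It remains to upgrade this rotation-invariance to $g\in\mathcal{VMOA}$. By the M\"obius-weighted characterization \eqref{c}, this amounts to proving that $\int_\D(1-|\psi_a(z)|^2)|g'(z)|^2\,\d A(z)\to 0$ as $|a|\to 1$. Using $(g-g\circ R_\lambda)'(\zeta)=g'(\zeta)-\lambda g'(\lambda\zeta)$ together with the rotation symmetry $|\psi_a(\lambda^{-1}w)|=|\psi_{\lambda a}(w)|$, which is immediate from \eqref{b}, one can recast the vanishing Carleson condition for each $g-g\circ R_\lambda$ as a rotated version of the one for $g$. An averaging argument over $\lambda\in\bS$, combined with the norm-closedness of $\mathcal{VMOA}$ inside $\mathcal{BMOA}$, should then yield the desired vanishing Carleson condition for $g$. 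I expect this averaging to be the main obstacle: ``$g-g\circ R_\lambda\in\mathcal{VMOA}$'' is only a pointwise-in-$\lambda$ statement, whereas $\mathcal{VMOA}$-membership of $g$ demands uniform control as $\lambda\to 1$. To close the gap one likely has to sharpen the commutator identity into an essential-norm inequality, or else enrich the test family beyond rotations---for instance to disc automorphisms $C_{\psi_a}$, or to the generalized area operators and compact intertwining relations advertised in the abstract---so as to capture the vanishing Carleson condition for $|g'|^2(1-|z|^2)\,\d A$ directly.
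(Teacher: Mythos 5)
Your sufficiency argument is the paper's, and your reduction of necessity to ``$g-g\circ R_\lambda\in\mathcal{VMOA}$ for every $\lambda\in\mathbb{T}$'' is correct; indeed the factorization $[J_g,C_\lambda]=J_{g-g\circ R_\lambda}C_\lambda$ plus invertibility of $C_\lambda$ is a cleaner route to the same intermediate statement that the paper reaches through its area-operator criterion (Theorem \ref{yyy}(ii) with $\varphi(z)=e^{\mathbf{i}\theta}z$ and $h=g$, which is precisely the condition $\lim_{|a|\to1}\int_{\mathbb{D}}(1-|\psi_a(z)|^2)\,|e^{\mathbf{i}\theta}g'(e^{\mathbf{i}\theta}z)-g'(z)|^2\,\mathrm{d}A(z)=0$, i.e.\ $g\circ R_\theta-g\in\mathcal{VMOA}$). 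The genuine gap is the final upgrade from this family of statements to $g\in\mathcal{VMOA}$, which you explicitly leave open and for which you suggest the wrong remedies: no essential-norm sharpening and no enlargement of the test family beyond rotations is needed, and the issue is not uniformity as $\lambda\to1$ or norm-closedness of $\mathcal{VMOA}$.

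The paper closes the gap by a direct averaging over $\theta$ whose engine is orthogonality, not approximation. Write $g(z)=\sum_{n\ge0}a_nz^n$, so that $(g\circ R_\theta-g)'(z)=\sum_{n\ge1}na_nz^{n-1}(e^{\mathbf{i}n\theta}-1)$ and, since $\int_0^{2\pi}e^{\mathbf{i}n\theta}\,\mathrm{d}\theta=0$ for every $n\ge1$, the average $\frac{1}{2\pi}\int_0^{2\pi}(g\circ R_\theta-g)'(z)\,\mathrm{d}\theta$ equals $-g'(z)$ exactly. Because $g\in\mathcal{BMOA}$, the quantities $\int_{\mathbb{D}}(1-|\psi_a(z)|^2)|(g\circ R_\theta-g)'(z)|^2\,\mathrm{d}A(z)$ are bounded uniformly in $\theta$ and $a$ by a constant multiple of $\|g\|_{\mathcal{BMOA}}^2$; hence dominated convergence allows the interchange of $\lim_{|a|\to1}$ with $\int_0^{2\pi}\mathrm{d}\theta$, Fubini allows the interchange of the $\theta$- and $z$-integrations, and Cauchy--Schwarz in $\theta$ gives $\int_0^{2\pi}|(g\circ R_\theta-g)'(z)|^2\,\mathrm{d}\theta\ge\frac{1}{2\pi}\bigl|\int_0^{2\pi}(g\circ R_\theta-g)'(z)\,\mathrm{d}\theta\bigr|^2=2\pi|g'(z)|^2$. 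Feeding this lower bound back into the vanishing condition yields $\lim_{|a|\to1}\int_{\mathbb{D}}(1-|\psi_a(z)|^2)|g'(z)|^2\,\mathrm{d}A(z)=0$, i.e.\ $g\in\mathcal{VMOA}$ (this is the computation in \eqref{30} and the display following it). With that paragraph added, your argument becomes a complete and essentially equivalent proof.
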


\begin{theorem}\label{thm-b}
For $0< p<\infty,$ the $I_{g}$ inner derivation $D(I_{g})$ on $\mathscr B(\mathcal{H}^p)$ maps into the ideal of compact operators if and only if $g$ is a complex scalar.
\end{theorem}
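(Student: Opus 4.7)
The plan is to dispatch the sufficient direction by direct computation and to reduce the necessary direction to an analytic statement about single Volterra operators via the rotation composition operators flagged in the introduction.

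For sufficiency, suppose $g\equiv c\in\C$. Then $I_g f(z) = c\int_0^z f'(\zeta)\,\d\zeta = c(f(z) - f(0))$, so $I_g = c(I - P)$, where $(Pf)(z) := f(0)$ is the rank-one projection of $\mathcal{H}^p$ onto the constants. Hence
\begin{equation*}
[I_g, T] \;=\; c[I,T] - c[P,T] \;=\; -c[P,T]
\end{equation*}
has rank at most two for every $T\in\mathscr B(\mathcal{H}^p)$, so $D(I_g)$ maps into the ideal of compact operators.

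For necessity, I would probe the derivation with the rotation composition operators $C_{\varphi_\theta}$, $\varphi_\theta(z) = e^{\i\theta}z$, each of which is an isometric automorphism of $\mathcal{H}^p$. A single change of variable in the integral defining $I_g$ produces the clean identity
\begin{equation*}
[I_g, C_{\varphi_\theta}]f(z) \;=\; \int_0^{e^{\i\theta}z} f'(w)\bigl[g(e^{-\i\theta}w) - g(w)\bigr]\,\d w \;=\; C_{\varphi_\theta}\bigl(I_{h_\theta} f\bigr)(z),
\end{equation*}
where $h_\theta(w) := g(e^{-\i\theta}w) - g(w)$. Since $C_{\varphi_\theta}$ is an invertible isometry on $\mathcal{H}^p$, compactness of $[I_g, C_{\varphi_\theta}]$ is equivalent to compactness of $I_{h_\theta}$ on $\mathcal{H}^p$ for every $\theta\in[0,2\pi]$.

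The heart of the proof is then the analytic fact that compactness of $I_h$ on $\mathcal{H}^p$ with $2\le p<\infty$ forces $h\equiv 0$; this is where the hypothesis $p\ge 2$ enters and where I expect the main technical obstacle. I would test on the normalized monomials $z^n$ (of unit $\mathcal{H}^p$-norm), which converge weakly to zero in $\mathcal{H}^p$ for $1<p<\infty$. If $h(w) = \sum_{k\ge 0} c_k w^k$, the integral evaluates explicitly to $I_h z^n(z) = n\sum_{k\ge 0} c_k z^{n+k}/(n+k)$, and
\begin{equation*}
\|I_h z^n\|_{\mathcal{H}^2}^2 \;=\; \sum_{k\ge 0}|c_k|^2 \frac{n^2}{(n+k)^2} \;\longrightarrow\; \|h\|_{\mathcal{H}^2}^2 \qquad (n\to\infty).
\end{equation*}
Since $\|\cdot\|_{\mathcal{H}^2}\le \|\cdot\|_{\mathcal{H}^p}$ for $p\ge 2$, the assumed compactness of $I_h$ forces $\|I_h z^n\|_{\mathcal{H}^2}\to 0$, and hence $h\equiv 0$. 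Applying this to each $h_\theta$ gives $g(e^{-\i\theta}z) = g(z)$ for all $\theta\in[0,2\pi]$ and $z\in\D$, so the rotation-invariant analytic function $g$ must be a complex constant.
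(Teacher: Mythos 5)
Your proof is correct, and although it uses the same probes as the paper --- the rotation composition operators $C_{\varphi_\theta}$ with $\varphi_\theta(z)=e^{\mathbf{i}\theta}z$ --- the way you convert compactness of $[I_g,C_{\varphi_\theta}]$ into information about $g$ is genuinely different. The paper runs the commutator through its general characterization of the operators $S_{\varphi,g,h}$ (Theorem~\ref{77}), which rests on the generalized area operator and vanishing Carleson measures (Lemma~\ref{66}) and ultimately on Luecking's embedding theorem (Lemma~\ref{12}); that is precisely where the restriction $2\le p$ enters for them, and the payoff of that machinery is that it treats arbitrary self-maps $\varphi$ and the $\mathcal{H}^p\to\mathcal{A}_\alpha^q$ setting needed elsewhere in Section~4. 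You instead exploit the exact algebraic factorization $[I_g,C_{\varphi_\theta}]=C_{\varphi_\theta}I_{h_\theta}$ with $h_\theta=g(e^{-\mathbf{i}\theta}\,\cdot\,)-g$, which, since $C_{\varphi_\theta}$ is an invertible isometry, reduces the whole problem to the compactness of a single Volterra operator $I_h$ on $\mathcal{H}^p$; you then dispose of that by testing on the weakly null monomials $z^n$ and using $\|\cdot\|_{\mathcal{H}^2}\le\|\cdot\|_{\mathcal{H}^p}$. This is shorter, self-contained, and bypasses the area-operator apparatus entirely. Two further remarks. First, the fact you reprove by hand ($I_h$ compact on $\mathcal{H}^p$ forces $h\equiv0$) is already recorded in the paper as Lemma~\ref{181}(vi) for all $0<p<\infty$; citing it in place of your monomial computation would remove your restriction to $p\ge2$, and combined with your sufficiency argument this reduction appears to settle Conjecture~\ref{conj} without any new area-operator estimates. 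Second, your sufficiency direction is actually the more careful one: since $I_g=c(I-P)$ is \emph{not} compact for a nonzero constant $c$, the paper's converse step (``if $I_g\in\mathcal{K}(\mathcal{H}^p)$\ldots'') literally covers only $g\equiv0$, whereas your observation that $[I_g,T]=-c[P,T]$ has rank at most two handles every complex scalar.
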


\medskip

The proofs of Theorems \ref{thm-a} and \ref{thm-b} are given in Sections \ref{proof-of-thm-a} and \ref{proof-of-thm-b}, respectively. In addition, we describe compact intertwining relations for multiplication and composition operators in Section \ref{section 5}.

\medskip

Throughout the paper, we write $A \lesssim B$ if there exists an absolute constant $C>0$ such that $A \leq C \cdot B$, and we write $A \approx B$ when $A \lesssim B$ and $B \lesssim A$.

\section{Preliminaries}

\subsection{Compact intertwining relations}

If $X$ and $Y$ are two quasi-Banach spaces, we denote by $\mathscr{B}(X, Y)$ the collection of all bounded linear operators from $X$ to $Y$, and by $\mathcal{K}(X, Y)$ the collection of all compact elements of $\mathscr{B}(X, Y)$, and by $\mathscr{Q}(X, Y)$ the quotient space $\mathscr{B}(X, Y) / \mathcal{K}(X, Y)$.

For $A \in \mathscr{B}(X, X), B \in \mathscr{B}(Y, Y)$ and $T \in \mathscr{B}(X, Y)$,
we say that $T$ \emph{intertwines} $A$ and $B$ in $\mathscr{Q}(X, Y)$ (or $T$ intertwines $A$ and $B$ compactly) if
\begin{align}\label{1}
T A=B T \quad \bmod \mathcal{K}(X, Y) \quad \text { with } \quad T \neq 0.
\end{align}
More intuitively, the compact intertwining relation is explained by the following commutative diagram:
\[\begin{CD} \label{d1}
  X @>{A}>> X\\
   @VV {T} V @VV {T} V\\
    {Y} @>{B}>> {Y}
\end{CD}\quad\quad\mod \mathcal K(X,Y).\]

\medskip

\noindent When $X=Y$ and $A=B$, it is easy to see that the following two assertions are equivalent:
\begin{itemize}
	\item[(i)] $T$ intertwines every $A\in \mathscr B(X)$ compactly.
	\item[(ii)] The inner derivation $D(T):\mathscr B(X)\to\mathscr B(X)$ ranges in the
	ideal of compact operators.
\end{itemize}

From this point of view, we will study the compact intertwining relations for composition operators and Volterra operators between different Hardy spaces, which are then used to obtain our two main results (Theorems \ref{thm-a} and \ref{thm-b}) as direct consequences. In this paper, we also study the compact intertwining relations for composition operators and Volterra operators, and multiplication operators from Hardy spaces to Bergman spaces.

In the series papers \cite{TYZ,TZ1,TZ2}, Yuan, Tong and Zhou firstly investigate the compact intertwining relations on the Bergman spaces, bounded analytic function spaces and Bloch spaces in the unit disk. By continuing this line of work, we characterize the compact intertwining relations for composition operators and Volterra operators between different Hardy spaces. Our main results on the Volterra-type inner derivation on $\mathscr B(\mathcal{H}^p)$ then follow immediately.

\subsection{Background on Volterra and composition operators}

We collect some preliminary lemmas on boundedness and compactness of Volterra  operators and composition operators
in this subsection.
For $0<\beta<\infty$, recall that the weighted Bloch space $\mathcal{B}^{\beta}$ is the space of all $f \in H(\mathbb{D})$ such that
$$
\|f\|_{\mathcal{B}^{\beta}}:=\sup _{z \in \mathbb{D}}\left(1-|z|^{2}\right)^{\beta}\left|f^{\prime}(z)\right|<\infty.
$$
Notice that $\|\cdot\|_{\mathcal{B}^{\beta}}$ is a complete semi-norm on $\mathcal{B}^{\beta}$, which is M\"{o}bius invariant. When equipped with the norm
$$
\|f\|=|f(0)|+\|f\|_{\mathcal{B}^{\beta}},
$$
the weighted Bloch space $\mathcal{B}^{\beta}$ becomes a Banach space. 

Denote by $\mathcal{B}_{0}^{\beta}$ the subspace of $\mathcal{B}^{\beta}$ consisting of those functions $f \in \mathcal{B}^{\beta}$ for which
$$
\lim _{|z| \rightarrow 1}\left(1-|z|^{2}\right)^{\beta}\left|f^{\prime}(z)\right|=0.
$$
For $\delta\geq0$, we define the space $\mathcal{H}^{\infty, \delta}$ of analytic functions by
\begin{align}\label{y1}
\mathcal{H}^{\infty, \delta}=\left\{f \in H(\mathbb{D}):\|f\|_{\infty}=\sup _{z \in \mathbb{D}}\left(1-|z|^{2}\right)^{\delta}|f(z)|<\infty\right\}
\end{align}
and write $\mathcal{H}^{\infty}$ for the space of non-weighted bounded analytic functions $\mathcal{H}^{\infty, 0}$. Further, we let $\mathcal{H}_{0}^{\infty, \delta}$ be the subspace of $\mathcal{H}^{\infty, \delta}$ consisting of $f \in \mathcal{H}^{\infty, \delta}$ with
$$
\lim _{|z| \rightarrow 1}\left(1-|z|^{2}\right)^{\delta}|f(z)|=0.
$$

\begin{remark_v2} It is well-known that, for $\delta>0, \mathcal{H}^{\infty, \delta}=\mathcal{B}^{1+\delta}$ and $\mathcal{H}_{0}^{\infty, \delta}=\mathcal{B}_{0}^{1+\delta}$ (see, for example, Proposition 7 of \cite{Zhu1}).
\end{remark_v2}

Let $p, q$ and $s$ be real numbers such that $0<p<\infty,-2<q<\infty$ and $0<s<\infty$. We say that a function $f\in H(\D)$ belongs to the space $\mathcal{F}(p, q, s)$ if
\begin{equation}\label{40}
\|f\|_{\mathcal{F}(p, q, s)}^{p}:=\sup _{a \in \mathbb{D}} \int_{\mathbb{D}}\left|f^{\prime}(z)\right|^{p}\left(1-|z|^{2}\right)^{q}\left(1-\left|\psi_{a}(z)\right|^{2}\right)^{s} \mathrm{~d} A(z)<\infty.
\end{equation}
The spaces $\mathcal{F}(p, q, s)$ were introduced in \cite{Z1}, and it was shown that many classical function spaces can be identified as $\mathcal{F}(p, q, s)$ with suitable parameters. Further, it was proved in \cite[Theorem 1]{Z2} that, when $-1<\alpha<\infty$, $\mathcal{F}(p, p \alpha-2, s)=\mathcal{B}^{\alpha}$ for every $p>0$ and $s>1$ (see also Theorem $1.3$ of \cite{Z1}). For $s=1$, we define $\mathcal{B M O A}$ type spaces by setting $\mathcal{B M O A}_{p}^{\alpha}=\mathcal{F}(p, p \alpha-2,1)$. It is known that $\mathcal{BMOA}_{2}^{1}=\mathcal{BMOA}$. We recall that the space $\mathcal{V M O A}_{p}^{\alpha}$ consists of those holomorphic functions $f$ in $\D$ with 
\begin{equation}\label{c1}
\lim _{|a|\rightarrow 1} \int_{\mathbb{D}}\left|f^{\prime}(z)\right|^{p}\left(1-|z|^{2}\right)^{p\alpha-2}\left(1-\left|\psi_{a}(z)\right|^{2}\right)=0.
\end{equation}

We now summarize further preliminary results in the following four lemmas. These results are all known or can be obtained with slight modifications of existing results and their proofs. For the first one, see Theorem 5 of \cite{JR}.

\begin{lemma}\label{18}
Let $0<p,q<\infty$, $g \in H(\mathbb{D}),-1<\alpha<\infty,$ and $\gamma=\frac{\alpha+2}{q}-\frac{1}{p}$.
\begin{itemize}
  \item[(i)] If $p<q$ and $\gamma+1 \geq 0$, then $J_{g}: \mathcal{H}^{p} \rightarrow \mathcal{A}_{\alpha}^{q}$ is bounded if and only if $g \in \mathcal{B}^{1+\gamma}$.
  \item[(ii)] If $p=q$, then $J_{g}: \mathcal{H}^{p} \rightarrow \mathcal{A}_{\alpha}^{q}$ is bounded if and only if $$g \in \mathcal{B M O A}_{p}^{1+(\alpha+1) / p}.$$
  \item[(iii)] If $p<q$ and $\gamma+1 \geq 0$, then $J_{g}: \mathcal{H}^{p} \rightarrow \mathcal{A}_{\alpha}^{q}$ is compact if and only if $g \in \mathcal{B}^{1+\gamma}_{0}$.
  \item[(iv)] If $p=q$, then $J_{g}: \mathcal{H}^{p} \rightarrow \mathcal{A}_{\alpha}^{q}$ is compact if and only if $$g \in \mathcal{V M O A}_{p}^{1+(\alpha+1) / p}$$.
\end{itemize}
\end{lemma}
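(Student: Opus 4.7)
The plan is to reduce the problem to a Carleson-measure question via the derivative characterization of Bergman norms, then translate the resulting geometric condition on Carleson boxes into an intrinsic condition on $g$. Since $(J_g f)(0) = 0$ and $(J_g f)'(z) = f(z) g'(z)$, Lemma \ref{yzx} with $n = 1$ gives
$$\|J_g f\|_{\mathcal{A}^q_\alpha}^q \approx \int_{\mathbb{D}} |f(z)|^q |g'(z)|^q (1-|z|^2)^{q+\alpha}\,\mathrm{d}A(z).$$
Setting $\mathrm{d}\mu_g(z) = |g'(z)|^q (1-|z|^2)^{q+\alpha}\,\mathrm{d}A(z)$, the boundedness (respectively compactness) of $J_g : \mathcal{H}^p \to \mathcal{A}^q_\alpha$ is equivalent to that of the inclusion $\mathcal{H}^p \hookrightarrow L^q(\mathrm{d}\mu_g)$.

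I would then invoke the classical Carleson-embedding theorems: for $p = q$ the inclusion is bounded iff $\mu_g(S(I)) \lesssim |I|$ and compact iff $\mu_g(S(I))/|I| \to 0$ as $|I| \to 0$; for $p < q$ the analogous conditions are $\mu_g(S(I)) \lesssim |I|^{q/p}$ and $\mu_g(S(I)) = o(|I|^{q/p})$. The hypothesis $\gamma + 1 \geq 0$ ensures the resulting exponents are admissible. It then remains to translate each Carleson condition into an intrinsic statement on $g$. In case (ii), a M\"obius-invariance manipulation using $(1-|\psi_a(z)|^2)/(1-|z|^2) = (1-|a|^2)/|1-\bar a z|^2$ rewrites the Carleson condition as
$$\sup_{a \in \mathbb{D}} \int_{\mathbb{D}} |g'(z)|^p (1-|z|^2)^{p+\alpha-1} (1-|\psi_a(z)|^2)\,\mathrm{d}A(z) < \infty,$$
which is precisely $\mathcal{F}(p, p+\alpha-1, 1) = \mathcal{BMOA}^{1+(\alpha+1)/p}_p$. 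In case (i), sufficiency follows from the pointwise bound $|g'(z)| \lesssim (1-|z|^2)^{-(1+\gamma)}$ combined with the standard estimate $\int_{S(I)} (1-|z|^2)^s\,\mathrm{d}A \approx |I|^{s+2}$ for $s>-1$; with $s = q/p - 2$ produced by $\gamma = (\alpha+2)/q - 1/p$ the total exponent works out to exactly $|I|^{q/p}$. Necessity comes from testing against normalized Hardy-space test functions of the form $(1-\bar a z)^{-N}$, with $N$ chosen so that their $\mathcal{H}^p$-norm is uniformly bounded in $a$, which yields a lower bound on $(1-|a|^2)^{1+\gamma}|g'(a)|$. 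The statements (iii) and (iv) are proved along the same route with the vanishing Carleson conditions, exploiting that such test functions with $|a_n| \to 1$ converge weakly to $0$, so a compact $J_g$ must send them to norm-null sequences.

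The main obstacle I anticipate lies in the necessity direction. For case (i) with $p < 1$, $\mathcal{H}^p$ is only a quasi-Banach space, so duality-based arguments are unavailable and one must push the test-function computation through directly; and for (iii)--(iv) one has to convert the little-$o$ Carleson condition into the correct pointwise (Bloch) or integral (VMOA) vanishing statement, which typically requires an extra limiting argument along boundary-approaching sequences combined with a partition of the disk into a compact core and a boundary strip. The derivative reduction and the M\"obius calculation themselves are routine once set up properly.
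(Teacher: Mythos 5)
The paper offers no proof of this lemma, simply citing Theorem~5 of \cite{JR}, and your argument is exactly the one used there: reduce via Lemma~\ref{yzx} (with $n=1$, using $(J_gf)'=fg'$) to the Carleson embedding $\mathcal{H}^{p}\hookrightarrow\mathcal{L}^{q}(\mathrm{d}\mu_g)$ with $\mathrm{d}\mu_g=|g'(z)|^{q}(1-|z|^{2})^{q+\alpha}\,\mathrm{d}A(z)$, then translate the box condition $\mu_g(S(I))\lesssim|I|^{q/p}$ into the Bloch condition (for $p<q$) or, via the identity $(1-|\psi_a(z)|^{2})/(1-|z|^{2})=(1-|a|^{2})/|1-\bar az|^{2}$, into membership in $\mathcal{F}(p,p+\alpha-1,1)=\mathcal{BMOA}_{p}^{1+(\alpha+1)/p}$ (for $p=q$). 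Your exponent bookkeeping checks out in all four cases, so the proposal is correct and coincides with the standard route.
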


The proof of the following lemma is similar to the proofs of Theorem 5 and Corollary 7 of \cite{JR}.

\begin{lemma}\label{7g}
Let $0<p,q<\infty$, $g \in H(\mathbb{D}),-1<\alpha<\infty,$ and $\gamma=\frac{\alpha+2}{q}-\frac{1}{p}$.
\begin{itemize}
  \item[(i)] If $p\leq q$, then $I_{g}: \mathcal{H}^{p} \rightarrow \mathcal{A}_{\alpha}^{q}$ is bounded if and only if $g \in \mathcal{B}^{1+\gamma}$.
  \item[(ii)] If $p\leq q$, then $I_{g}: \mathcal{H}^{p} \rightarrow \mathcal{A}_{\alpha}^{q}$ is compact if and only if $g \in \mathcal{B}_{0}^{1+\gamma}$.
\end{itemize}
\end{lemma}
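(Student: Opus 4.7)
The plan is to mimic the scheme of Theorem~5 and Corollary~7 of \cite{JR}, the essential difference being that $(I_g f)'(z) = f'(z) g(z)$ whereas $(J_g f)'(z) = f(z) g'(z)$. My first step is to apply Lemma \ref{yzx} with $n=1$ to reduce (using $I_g f(0) = 0$) both the boundedness and compactness of $I_g: \mathcal{H}^p \to \mathcal{A}_\alpha^q$ to a weighted area estimate of the form
\begin{equation*}
\int_{\mathbb{D}} |f'(z)|^q |g(z)|^q (1-|z|^2)^{q+\alpha}\, dA(z) \lesssim \|f\|^q_{\mathcal{H}^p}.
\end{equation*}
For the sufficiency of (i), if $g \in \mathcal{B}^{1+\gamma}$ then, using the identification $\mathcal{B}^{1+\gamma} = \mathcal{H}^{\infty,\gamma}$ for $\gamma > 0$ recorded in the remark above, one has $|g(z)| \lesssim (1-|z|^2)^{-\gamma}$, and the integrand is dominated by $|f'(z)|^q (1-|z|^2)^{q+\alpha-q\gamma} = |f'(z)|^q (1-|z|^2)^{q-2+q/p}$. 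The resulting integral is then controlled by $\|f\|^q_{\mathcal{H}^p}$ via the derivative form (Lemma \ref{yzx}) of the Hardy--Littlewood embedding $\mathcal{H}^p \hookrightarrow \mathcal{A}_{q/p - 2}^{q}$ when $p < q$, and via the Littlewood--Paley inequality $\int_{\mathbb{D}} |f'|^p (1-|z|^2)^{p-1}\, dA \lesssim \|f\|^p_{\mathcal{H}^p}$ when $p = q$.

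For the necessity of (i), I would test against the normalized M\"obius family $f_a(z) = (1-|a|^2)^{1/p}(1-\bar a z)^{-2/p}$ for $a \in \mathbb{D}$, which satisfies $\|f_a\|_{\mathcal{H}^p} \approx 1$ and $|f_a'(z)| \approx (1-|a|^2)^{-(1+1/p)}$ uniformly on the pseudohyperbolic disk $\Delta(a, r)$ for any fixed $r \in (0,1)$. Restricting the weighted integral to $\Delta(a, r)$, exploiting $(1-|z|^2) \approx (1-|a|^2)$ on this disk, and invoking the sub-mean value estimate $|g(a)|^q (1-|a|^2)^2 \lesssim \int_{\Delta(a,r)} |g|^q \, dA$ produces the pointwise bound $|g(a)|(1-|a|^2)^\gamma \lesssim \|I_g\|$, which is precisely $g \in \mathcal{H}^{\infty,\gamma} = \mathcal{B}^{1+\gamma}$.

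The two halves of (ii) are then obtained by the standard compactness criterion for operators between spaces of analytic functions. For sufficiency, given any sequence $\{f_n\}$ bounded in $\mathcal{H}^p$ with $f_n \to 0$ uniformly on compacta, I would split the integral into the core $\{|z|\leq r\}$ (where $|f_n'|$ converges uniformly to $0$ and $|g|$ is bounded) and the collar $\{|z|>r\}$ (where $|g(z)|(1-|z|^2)^\gamma$ is uniformly small by the $\mathcal{B}^{1+\gamma}_0$ hypothesis, and the residual factor is controlled as in (i)); letting first $n \to \infty$ and then $r \to 1^-$ concludes. For necessity, the same family $f_a$ converges to $0$ uniformly on compacta as $|a|\to 1^-$, so compactness of $I_g$ forces $\|I_g f_a\|_{\mathcal{A}_\alpha^q}\to 0$, and the test-function estimate of the preceding paragraph then upgrades to $\lim_{|a|\to 1^-} |g(a)|(1-|a|^2)^\gamma = 0$, i.e., $g \in \mathcal{B}^{1+\gamma}_0$. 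The principal subtlety in all of this is the borderline regime where the identification $\mathcal{B}^{1+\gamma} = \mathcal{H}^{\infty,\gamma}$ degenerates (notably $\gamma \leq 0$); aside from that, the argument is a direct transposition of the $J_g$ proof in \cite{JR} with the derivative placed on $f$ rather than $g$.
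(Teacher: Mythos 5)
The paper does not actually prove this lemma; it only asserts that the proof ``is similar to the proofs of Theorem 5 and Corollary 7 of \cite{JR}''. Your reduction via Lemma \ref{yzx} (with $n=1$ and $I_gf(0)=0$) to the weighted embedding $\int_{\mathbb{D}}|f'|^q|g|^q(1-|z|^2)^{q+\alpha}\,dA\lesssim\|f\|_{\mathcal{H}^p}^q$, the test family $f_a$ for necessity, and the identification $\mathcal{B}^{1+\gamma}=\mathcal{H}^{\infty,\gamma}$ are exactly the intended route, and the case $p<q$, $\gamma>0$ is handled correctly (the majorant $(1-|z|^2)^{q-2+q/p}$ gives, via Lemma \ref{yzx}, precisely the Hardy--Littlewood embedding $\mathcal{H}^p\hookrightarrow\mathcal{A}^q_{q/p-2}$, and your $\Delta(a,r)$ computation for necessity yields $|g(a)|(1-|a|^2)^{\gamma}\lesssim\|I_g\|$ as claimed).

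There is, however, a genuine gap in the case $p=q<2$. The inequality you invoke there, $\int_{\mathbb{D}}|f'|^p(1-|z|^2)^{p-1}\,dA\lesssim\|f\|_{\mathcal{H}^p}^p$, is the Littlewood--Paley estimate that holds only for $p\ge 2$; for $0<p<2$ only the reverse inequality is valid, and there are $f\in\mathcal{H}^p$ (e.g.\ suitable lacunary series) for which the left-hand side is infinite. This is precisely why the paper's own Lemma \ref{12} (Luecking) is stated only for ``$2\le p=q$ or $0<p<q$'', and why Lemmas \ref{66} and \ref{77} and Theorem \ref{thm-b} all carry the hypothesis $2\le p=q$: the Carleson-measure characterization of $\int|f'|^q\,d\mu\lesssim\|f\|_{\mathcal{H}^p}^q$ genuinely breaks down at $p=q<2$, so your sufficiency argument cannot be patched by the same domination method in that range (and it is doubtful that the statement itself holds there without modification). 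You should either restrict (ii) of your argument to $p=q\ge 2$ or $p<q$, or supply an entirely different argument for $p=q<2$. Separately, the borderline $\gamma\le 0$ you flag is more than a technicality: your necessity computation produces $g\in\mathcal{H}^{\infty,\gamma}$, which coincides with $\mathcal{B}^{1+\gamma}$ only when $\gamma>0$; at $\gamma=0$ the correct conclusion is $g\in\mathcal{H}^{\infty}\subsetneq\mathcal{B}^{1}$ (compare Lemma \ref{6f}(ii) and Lemma \ref{181}(iii)), and for $\gamma<0$ it forces $g\equiv 0$, so the statement as quoted needs the hypothesis $\gamma>0$ for the equivalence with $\mathcal{B}^{1+\gamma}$ to be literally correct. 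That defect lies in the statement rather than in your argument, but a complete proof must address it rather than set it aside.
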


The following assertions can be obtained from the main theorems of \cite{PXJ} and \cite{ZRH}.

\begin{lemma}\label{6f}
Let $0<p,q<\infty$, $g \in H(\mathbb{D}),-1<\alpha<\infty,$ and $\gamma=\frac{\alpha+2}{q}-\frac{1}{p}$.
\begin{itemize}
  \item[(i)] If $p<q$ and $\gamma>0$, then $M_{g}: \mathcal{H}^{p} \rightarrow \mathcal{A}_{\alpha}^{q}$ is bounded if and only if $g \in \mathcal{B}^{1+\gamma}$.
  \item[(ii)] If $p<q$ and $\gamma=0$, then $M_{g}: \mathcal{H}^{p} \rightarrow \mathcal{A}_{\alpha}^{q}$ is bounded if and only if $g \in \mathcal{H}^{\infty}$.
  \item[(iii)] If $p=q$, then $M_{g}: \mathcal{H}^{p} \rightarrow \mathcal{A}_{\alpha}^{p}$ is bounded if and only if $g \in \mathcal{B M O A}_{p}^{1+(\alpha+1) / p}$.
  \item[(iv)] If $p<q$ and $\gamma>0$, then $M_{g}: \mathcal{H}^{p} \rightarrow \mathcal{A}_{\alpha}^{q}$ is compact if and only if $g \in \mathcal{B}^{1+\gamma}_{0}$.
  \item[(v)] If $p<q$ and $\gamma=0$, then $M_{g}: \mathcal{H}^{p} \rightarrow \mathcal{A}_{\alpha}^{q}$ is compact if and only if $g\equiv0$.
  \item[(vi)] If $p=q=2$, then $M_{g}: \mathcal{H}^{2} \rightarrow \mathcal{A}_{\alpha}^{2}$ is compact if and only if $g \in \mathcal{V M O A}_{2}^{1+(\alpha+1) / 2}$.
\end{itemize}
\end{lemma}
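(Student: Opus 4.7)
The plan is to reduce each statement to the two previous lemmas via the decomposition
\[M_g f = f(0)g(0) + J_g f + I_g f,\]
together with the Carleson-measure multiplier results of \cite{PXJ} and \cite{ZRH}. Sufficiency in every case is essentially immediate: the constant term is a rank-one (hence compact) operator, so it is enough to check that the claimed condition on $g$ forces \emph{both} $J_g$ and $I_g$ to be bounded (respectively compact) from $\mathcal{H}^p$ into $\mathcal{A}_\alpha^q$. In (i) and (iv) with $\gamma>0$ this is direct from Lemmas \ref{18}(i),(iii) and \ref{7g}. In (iii), one uses the inclusion $\mathcal{BMOA}_p^{1+(\alpha+1)/p}\subset \mathcal{B}^{1+(\alpha+1)/p}$ so that $J_g$ is covered by Lemma \ref{18}(ii) and $I_g$ by Lemma \ref{7g}(i) (note $\gamma=(\alpha+1)/p>0$); case (vi) is handled analogously with $\mathcal{VMOA}_2^{1+(\alpha+1)/2}$ and $\mathcal{B}_0^{1+\gamma}$. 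For (ii), boundedness of $g$ makes $M_g$ a pointwise multiplier of $\mathcal{H}^p$ into the larger space $\mathcal{A}_\alpha^q$ by the Carleson-embedding criterion from \cite{PXJ}.

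For necessity I would test $M_g$ on the normalized M\"obius family
\[f_a(z)=\frac{(1-|a|^2)^{s/p}}{(1-\bar a z)^{s}},\qquad a\in\mathbb{D},\]
with $s$ chosen so that $\{f_a\}$ is bounded in $\mathcal{H}^p$ and converges weakly to $0$ as $|a|\to 1$. Using the derivative characterization of $\mathcal{A}_\alpha^q$ (Lemma \ref{yzx}) and $(M_g f)'=g'f+g f'$, integrating over the pseudo-hyperbolic disk centered at $a$ and invoking subharmonicity yield lower bounds of the form $(1-|a|^2)^{1+\gamma}|g'(a)|\lesssim \|M_g\|$ together with $(1-|a|^2)^{\gamma}|g(a)|\lesssim \|M_g\|$. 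In (i) this produces the $\mathcal{B}^{1+\gamma}$ estimate on $g'$; in (iii) one refines the local estimate to a Carleson-type square function and obtains the $\mathcal{BMOA}_p^{1+(\alpha+1)/p}$ norm, exactly as in the main theorem of \cite{ZRH}. In the borderline cases (ii) and (v) the pointwise bound on $g$ itself becomes decisive: $\gamma=0$ forces $g\in\mathcal{H}^\infty$, and for (v) the compactness of $M_g$ combined with $f_a\rightharpoonup 0$ gives $\|M_g f_a\|_{\mathcal{A}_\alpha^q}\to 0$, whence $|g(a)|\to 0$ as $|a|\to 1$, and Liouville-type arguments force $g\equiv 0$.

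Compactness in (iv) and (vi) follows the same scheme: since $M_g$ is compact and $f_a\rightharpoonup 0$ weakly in $\mathcal{H}^p$ as $|a|\to 1$, we have $\|M_g f_a\|_{\mathcal{A}_\alpha^q}\to 0$, so the lower bounds extracted above decay to zero. This upgrades the big-$\mathcal{O}$ estimates on $g'$ to little-$o$ ones, giving $g\in\mathcal{B}_0^{1+\gamma}$ in (iv) and $g\in\mathcal{VMOA}_2^{1+(\alpha+1)/2}$ in (vi), while sufficiency again reduces to Lemmas \ref{18}(iv) and \ref{7g}(ii) via the decomposition.

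The main technical obstacle is cleanly separating the contributions of $g'f_a$ and $gf_a'$ in the integrand for $(M_g f_a)'$. At the scale of the Carleson box at $a$ the two terms are of comparable size, but the corresponding pointwise bounds on $g$ and on $g'$ differ by the factor $(1-|a|^2)^{-1}$, so testing against a single family $f_a$ does not immediately disentangle them. The way around this is to test against \emph{two} families with different $s$ (or equivalently, against $f_a$ and $zf_a$) and subtract, which is the step where the arguments of \cite{PXJ,ZRH} enter in an essential way; everything else is then bookkeeping via the decomposition above.
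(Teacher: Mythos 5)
The paper does not actually prove this lemma: it is presented as a summary of known results, to be ``obtained from the main theorems of \cite{PXJ} and \cite{ZRH}''. Your proposal is therefore necessarily a different route, namely a reduction to Lemmas \ref{18} and \ref{7g} through the decomposition $M_{g}f=f(0)g(0)+J_{g}f+I_{g}f$ combined with test functions. The sufficiency directions and the necessity in (i), (ii), (iv) and (v) are fine as sketched: testing on a single family $f_{a}(z)=(1-|a|^{2})^{t}(1-\bar{a}z)^{-t-1/p}$ and using subharmonicity of $|gf_{a}|^{q}$ on a pseudo-hyperbolic disk about $a$ gives $|g(a)|(1-|a|^{2})^{\gamma}\lesssim\|M_{g}\|$ directly; for $\gamma>0$ this is already $g\in\mathcal{B}^{1+\gamma}$ via the identification $\mathcal{H}^{\infty,\gamma}=\mathcal{B}^{1+\gamma}$, for $\gamma=0$ it gives $g\in\mathcal{H}^{\infty}$, and the little-o versions give $\mathcal{B}_{0}^{1+\gamma}$, respectively $g\equiv0$ (by the maximum modulus principle rather than a Liouville argument). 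In particular, the ``main technical obstacle'' you describe at the end --- separating the contributions of $g'f_{a}$ and $gf_{a}'$ --- is a non-issue: for a multiplication operator you never need to differentiate $M_{g}f_{a}$, so no second family of test functions is required.

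The genuine gap is the necessity direction in (iii) and (vi). Pointwise lower bounds extracted from individual test functions can only ever yield the Bloch-type conditions $g\in\mathcal{B}^{1+(\alpha+1)/p}$, respectively $\mathcal{B}_{0}^{1+(\alpha+1)/2}$, and these are strictly weaker than $\mathcal{BMOA}_{p}^{1+(\alpha+1)/p}$ and $\mathcal{VMOA}_{2}^{1+(\alpha+1)/2}$, which are integral (Carleson-type) conditions corresponding to $s=1$ in the $\mathcal{F}(p,q,s)$ scale, whereas the Bloch spaces correspond to $s>1$. Your sentence ``this upgrades the big-$\mathcal{O}$ estimates on $g'$ to little-$o$ ones, giving \dots\ $g\in\mathcal{VMOA}_{2}^{1+(\alpha+1)/2}$'' is therefore a non sequitur as written, and for (iii) you defer the analogous step to \cite{ZRH} without an argument. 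The repair is available inside your own framework and makes the appeal to \cite{ZRH} unnecessary: the pointwise estimate gives $g\in\mathcal{B}^{1+\gamma}$ with $\gamma=(\alpha+1)/p>0$, hence $I_{g}:\mathcal{H}^{p}\to\mathcal{A}_{\alpha}^{p}$ is bounded by Lemma \ref{7g}(i); then $J_{g}=M_{g}-I_{g}-R$, where $R$ is the rank-one operator $f\mapsto f(0)g(0)$, is bounded as a difference of bounded operators, and Lemma \ref{18}(ii) forces $g\in\mathcal{BMOA}_{p}^{1+(\alpha+1)/p}$. The compact case (vi) is identical, using the little-o pointwise estimate, Lemma \ref{7g}(ii) and Lemma \ref{18}(iv). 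With this modification your decomposition strategy carries all six assertions.
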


Finally, the following lemma summarizes the characterizations obtained in \cite{AJ} for boundedness and compactness of the operators $J_{g}$ and $I_{g}$ and the multiplication operator $M_{g}$ acting from $\mathcal{H}^{p}$ to $\mathcal{H}^{q}$.

\begin{lemma}\label{181}
Let $0<p,q<\infty$, $g \in H(\mathbb{D})$. Then
\begin{itemize}
  \item[(i)] If $\frac{q}{q+1}\leq p<q$,  then $J_{g}: \mathcal{H}^{p} \rightarrow \mathcal{H}^{q}$ is bounded if and only if $g \in \mathcal{B}^{1+\frac{1}{q}-\frac{1}{p}}$.
  \item[(ii)] If $p=q$, then $J_{g}: \mathcal{H}^{p} \rightarrow \mathcal{H}^{q}$ is bounded if and only if $g \in \mathcal{B M O A}$.
  \item[(iii)] If $p=q$, then $I_{g}(\mbox{or }M_{g}): \mathcal{H}^{p} \rightarrow \mathcal{H}^{q}$ is bounded if and only if $g \in \mathcal{H}^{\infty}$.
  \item[(iv)] If $\frac{q}{q+1}\leq p<q$, then $J_{g}: \mathcal{H}^{p} \rightarrow \mathcal{H}^{q}$ is compact if and only if $g \in \mathcal{B}^{1+\frac{1}{q}-\frac{1}{p}}_{0}$.
  \item[(v)] If $p=q$, then $J_{g}: \mathcal{H}^{p} \rightarrow \mathcal{H}^{q}$ is compact if and only if $g \in \mathcal{V M O A}$.
  \item[(vi)] If $p=q$, then $I_{g}(\mbox{or }M_{g}): \mathcal{H}^{p} \rightarrow \mathcal{H}^{q}$ is compact if and only if $g \equiv0$.
\end{itemize}
\end{lemma}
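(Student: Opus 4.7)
The plan is to verify each item of Lemma \ref{181} by unpacking the relevant results of \cite{AJ}; since the lemma is explicitly advertised as a summary of characterizations obtained there, the task is essentially one of identification and packaging rather than fresh argumentation. I would organize the verification by splitting into the diagonal case $p=q$ and the off-diagonal case $p<q$, and within each case handling $J_g$ separately from $I_g$ and $M_g$.

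First I would handle the diagonal case $p=q$. The statement for $J_g$ in (ii) and (v) is the celebrated Aleman--Siskakis theorem, whose proof proceeds via the Littlewood--Paley identity $\|J_g f\|_{\mathcal{H}^p}^p \approx |J_g f(0)|^p + \int_{\mathbb{D}} |f(z)|^p |g'(z)|^2 (1-|z|^2)\, \d A(z)$, reducing boundedness of $J_g$ to the $\mathcal{H}^p$--Carleson measure condition for $\d\mu_g = |g'|^2(1-|z|^2)\, \d A$, and identifying this via \eqref{v} with $g\in \mathcal{BMOA}$; compactness is encoded by vanishing Carleson measure, equivalent to \eqref{c}, i.e.\ $g\in\mathcal{VMOA}$. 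For (iii) and (vi) with $M_g$, one direction is trivial ($g\in\mathcal{H}^\infty$ gives $\|gf\|_{\mathcal{H}^p}\leq \|g\|_\infty \|f\|_{\mathcal{H}^p}$), and the converse uses pointwise estimates on $\mathcal{H}^p$: applying $M_g$ to normalized reproducing-type test functions $f_a$ concentrated near a boundary point gives $|g(a)| \lesssim \|M_g\|$, forcing $g\in\mathcal{H}^\infty$. For $I_g$, the identity $M_g = J_g + I_g + g(0)\cdot\mathrm{ev}_0$ combined with the already-known description of $J_g$ reduces $I_g$ to $M_g$ modulo a rank-one correction. Compactness of $M_g$ or $I_g$ on $\mathcal{H}^p$ is then ruled out for $g\not\equiv 0$ by testing against the orthonormal-like sequence $f_n(z)=z^n$, which is weakly null while $\|gf_n\|_{\mathcal{H}^p}\not\to 0$.

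Second, for the off-diagonal case $p<q$, the characterization (i) for $J_g$ rests on tent-space/Carleson embedding arguments specific to Hardy spaces, which under the restriction $q/(q+1)\leq p < q$ identify boundedness of $J_g:\mathcal{H}^p\to\mathcal{H}^q$ with $\d\mu_g$ being a Carleson-type measure of order $(p,q)$; a standard Bloch calculation (analogous to Lemma \ref{18}) then shows this is equivalent to $g\in\mathcal{B}^{1+1/q-1/p}$. Item (iv) for compactness is the corresponding little-oh version, yielding membership in $\mathcal{B}^{1+1/q-1/p}_0$, again parallel to Lemma \ref{18}(iii).

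The main obstacle, and the point where I would have to read \cite{AJ} carefully rather than just transcribe, is the source of the restriction $q/(q+1)\leq p<q$ in (i) and (iv). This restriction reflects the range in which the $\mathcal{H}^p$--Carleson measure theory for $\d\mu_g$ is available through tent-space duality, and its sharpness (or the need for different techniques outside this range) is delicate; in the write-up I would simply quote the relevant theorem of \cite{AJ} rather than re-derive the tent-space machinery. The other mildly nontrivial point is that, in (iii), the boundedness of $I_g$ on $\mathcal{H}^p$ being equivalent to $g\in\mathcal{H}^\infty$ requires one to rule out cancellation between $M_g$ and $J_g$; this is handled by observing that if $I_g$ is bounded then $f\mapsto f'g$ maps $\mathcal{H}^p$ into its primitive space, and testing on monomials forces $g$ bounded.
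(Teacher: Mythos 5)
The paper itself offers no proof of Lemma \ref{181}: it is stated purely as a summary of the characterizations in \cite{AJ}, so your plan of identifying each item with the corresponding theorem there and quoting it is exactly what the authors do, and in that sense your proposal takes the same route. Your reconstructions of the underlying arguments are also broadly faithful to \cite{AJ} (Carleson-embedding reduction for $J_g$, multiplier argument for $M_g$, the identity $M_g f = f(0)g(0)+J_gf+I_gf$ to pass between $I_g$ and $M_g$, and the weakly null sequence $z^n$ for (vi)).

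Two of your sketched steps, however, would fail if executed literally. First, the claimed two-sided estimate $\|J_g f\|_{\mathcal{H}^{p}}^{p} \approx |J_g f(0)|^{p} + \int_{\mathbb{D}} |f(z)|^{p}|g'(z)|^{2}(1-|z|^{2})\,\mathrm{d}A(z)$ is an identity only for $p=2$; for general $p$ the two sides are not comparable, and the correct reduction (the one used in \cite{AJ} and reflected in Lemma \ref{sss} of this paper) is Calder\'on's area theorem, $\|J_g f\|_{\mathcal{H}^{p}}^{p} \approx \int_{\mathbb{T}}\bigl(\int_{\Gamma(\zeta)}|f(z)|^{2}|g'(z)|^{2}\,\mathrm{d}A(z)\bigr)^{p/2}|\mathrm{d}\zeta|$, followed by the tent-space/Carleson embedding. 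Second, in item (iii) the necessity of $g\in\mathcal{H}^{\infty}$ for boundedness of $I_g$ is not obtained by ``testing on monomials'': the norms $\|I_g(z^{n})\|_{\mathcal{H}^{p}}$ do not directly control $\sup|g|$. The standard argument applies $I_g$ to the normalized kernels $f_a(z)=\bigl((1-|a|^{2})/(1-\bar a z)^{2}\bigr)^{1/p}$ and combines $\|I_g f_a\|_{\mathcal{H}^{p}}\lesssim 1$ with the pointwise growth estimate $|F'(a)|\lesssim \|F\|_{\mathcal{H}^{p}}(1-|a|)^{-1-1/p}$ and $|f_a'(a)|\approx(1-|a|)^{-1-1/p}$ to conclude $|g(a)|\lesssim 1$. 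Since the lemma is in any case a citation, these slips do not affect the paper, but they should be corrected if you intend your sketch to stand as an actual verification.
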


\subsection{Carleson measures}
In this subsection, we state the generalized Carleson measure theorem for $\mathcal{H}^{p}$. A classical theorem of Carleson \cite{LC1, LC2} states that the injection map from the Hardy space $\mathcal{H}^{p}$ into the measure space $\mathcal{L}^{p}(d \mu)$ is bounded if and only if the positive measure $\mu$ on $\mathbb{D}$ is a bounded Carleson measure. For $0<s<\infty$, a positive measure $\mu$ on $\mathbb{D}$ is a bounded $s$-Carleson measure if
\begin{align}\label{11}
\|\mu\|_{CM_{s}}:=\sup _{I} \frac{\mu(S(I))}{|I|^{s}}<\infty,
\end{align}
where $|I|$ denotes the arc length of a subarc $I$ of $\mathbb{T}$,
$$
S(I)=\left\{re^{it} \in \mathbb{D}: e^{it} \in I, 1-|I| \leqslant r<1\right\}
$$
is the Carleson box based on $I$, and the supremum is taken over all subarcs $I$ of $\mathbb{T}$ such that $|I| < 1$. We associate to each $a \in \mathbb{D}\backslash\{0\}$ the interval $I_{a}=\left\{\zeta \in \mathbb{T} : |\zeta-\frac{a}{|a|}|\leq \frac{1-|a|}{2}\right\}$, and denote by $S(a)=S(I_a)$. A positive measure $\mu$ on $\mathbb{D}$ is a vanishing $s$-Carleson measure if the limits
\begin{align}\label{C}
\lim_{|I|\rightarrow 0} \frac{\mu(S(I))}{|I|^{s}}=0
\end{align}
hold uniformly for $I \in \mathbb{T}$.
It is well known (see \cite{ASX} and \cite{GJ}) that $\mu$ on $\mathbb{D}$ is an $s$-Carleson measure if and only if
\begin{align}\label{13}
\sup _{a \in \mathbb{D}} \int_{\mathbb{D}}\left(\frac{1-|a|^2}{|1-\overline{a}z|^2}\right)^{s} \mathrm{~d} \mu(z)<\infty.
\end{align}

For $z \in \mathbb{D}$ and $\varphi \in \mathcal{L}^{1}\left(\mathbb{T}\right)$, the Hardy-Littlewood maximal function is defined by 
$$
M(\varphi)(z)=\sup_{I} \frac{1}{|I|} \int_{I}|\varphi(\zeta)| |d \zeta|, \quad z \in \mathbb{D},
$$
where the supremum is taken over all arcs $I\subset \mathbb{T}$ for which $z\in S(I)$.

The following lemma follows from Theorem 2.1 of \cite{LLR1}. We simplify the result in the one-dimensional case as follows.
\begin{lemma}\label{cvb}
	Let $0<p \leq q<\infty$ and $0<\alpha<\infty$ such that p$\alpha>1$. Let $\mu$ be a positive Borel measure on $\mathbb{D}$. 
	Then $\left[M\left((\cdot)^{1 / \alpha}\right)\right]^{\alpha}: \mathcal{L}^{p}\left(\mathbb{T}\right) \rightarrow \mathcal{L}^{q}(\mu)$ is bounded
	 if and only if $\mu$ is a $q / p$-Carleson measure. Moreover, we have
	$$
	\left\|\left[M\left((\cdot)^{1 / \alpha}\right)\right]^{\alpha}\right\|_{\mathcal{L}^{p}\left(\mathbb{T}\right) \rightarrow \mathcal{L}^{q}(\mu)}^{q} \approx \sup _{I} \frac{\mu(S(I))}{|I|^{q / p}}.
	$$
\end{lemma}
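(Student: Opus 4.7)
The plan is to establish the equivalence by proving necessity and sufficiency separately, then to track constants to obtain the asserted two-sided norm estimate. This lemma is the one-dimensional version of Theorem~2.1 in \cite{LLR1}, so at a minimum one can specialize that result; I also sketch a self-contained argument below.

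For \emph{necessity}, I would test the operator on indicator functions $\varphi = \chi_I$ for subarcs $I \subset \mathbb{T}$. Since $\chi_I^{1/\alpha} = \chi_I$, we have $[M(\chi_I^{1/\alpha})]^\alpha = [M(\chi_I)]^\alpha$. For $z \in S(I)$, the arc $I$ itself is admissible in the supremum defining $M(\chi_I)(z)$ (as $z/|z| \in I$ and $1-|z|\le |I|$), so
\[
[M(\chi_I)(z)]^\alpha \ge 1 \quad \text{for } z\in S(I).
\]
Boundedness of the operator then gives
\[
\mu(S(I)) \le \int_{S(I)} [M(\chi_I)]^{\alpha q}\, d\mu \lesssim \bigl\|[M((\cdot)^{1/\alpha})]^\alpha\bigr\|^q_{\mathcal{L}^p(\mathbb{T}) \to \mathcal{L}^q(\mu)} \, |I|^{q/p},
\]
which is precisely the $q/p$-Carleson condition with the correct quantitative control.

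For \emph{sufficiency}, set $g = \varphi^{1/\alpha}$, so $\|g\|_{\mathcal{L}^{p\alpha}}^\alpha = \|\varphi\|_{\mathcal{L}^p}$, and let $r = p\alpha$, $s = q\alpha$; note $r>1$ by hypothesis and $s \ge r$ since $q\ge p$. The goal reduces to proving
\[
\int_{\mathbb{D}} (Mg)^{s} \, d\mu \lesssim \|\mu\|_{CM_{s/r}} \, \|g\|_{\mathcal{L}^{r}(\mathbb{T})}^{s}.
\]
The geometric heart of the argument is a covering lemma at each level $t>0$: by applying a Calder\'on--Zygmund stopping-time decomposition to $g$ at level $t$, one obtains maximal pairwise disjoint arcs $\{I_k\}$ with $\tfrac{1}{|I_k|}\int_{I_k}|g|\asymp t$, satisfying $\{Mg>t\}\subset \bigcup_k S(I_k)$ and $\sum_k |I_k| \le |\{M_{\mathbb{T}} g>t\}|$, where $M_{\mathbb{T}}$ is the ordinary Hardy--Littlewood maximal function on $\mathbb{T}$. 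Combining the Carleson hypothesis with the super-additivity $\sum_k x_k^{s/r} \le (\sum_k x_k)^{s/r}$ (valid since $s/r=q/p\ge 1$) yields
\[
\mu(\{Mg>t\}) \;\le\; \sum_k \mu(S(I_k)) \;\lesssim\; \|\mu\|_{CM_{s/r}}\, |\{M_{\mathbb{T}} g >t\}|^{s/r}.
\]

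Finally I would integrate via the layer-cake identity and recognize a Lorentz norm:
\[
\int_{\mathbb{D}} (Mg)^{s}\, d\mu = s\int_0^\infty t^{s-1}\mu(\{Mg>t\})\, dt \;\lesssim\; \|\mu\|_{CM_{s/r}}\, \|M_{\mathbb{T}} g\|_{L^{r,s}(\mathbb{T})}^{s}.
\]
Since $s\ge r$, the inclusion $L^r \subset L^{r,s}$ combined with the strong-type bound $\|M_{\mathbb{T}} g\|_{L^r(\mathbb{T})} \lesssim \|g\|_{L^r(\mathbb{T})}$ (which requires $r = p\alpha>1$) gives the desired estimate. The main obstacle is the geometric covering step, which depends on the precise convention for the supremum defining $M$ over arcs admissible relative to the scale $1-|z|$; once this Calder\'on--Zygmund decomposition is fixed, the interpolation/Lorentz-space step is routine.
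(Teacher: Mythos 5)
Your proposal is correct in substance, but there is nothing in the paper to compare it against line by line: the authors give no proof of Lemma \ref{cvb} at all, stating only that it ``follows from Theorem 2.1 of \cite{LLR1}'' specialized to dimension one. You both acknowledge that citation route and reconstruct the underlying argument, which is essentially the classical Carleson embedding scheme that \cite{LLR1} itself runs: test on $\chi_I$ for necessity; for sufficiency, pass to $g=\varphi^{1/\alpha}$ with $r=p\alpha>1$, $s=q\alpha$, cover the level set $\{Mg>t\}\subset\mathbb{D}$ by Carleson boxes over arcs of $\{M_{\mathbb{T}}g>t\}$, use $s/r=q/p\ge1$ to sum the Carleson condition, and finish with the layer-cake formula, the embedding $L^{r}\subset L^{r,s}$, and the maximal theorem (which is exactly where $p\alpha>1$ enters). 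Two small points deserve care if you write this out in full. First, the definition of $M(\varphi)(z)$ as printed in the paper is degenerate (the supremum over all arcs does not depend on $z$); your reading --- admissible arcs are those with $z/|z|\in I$ and $|I|\ge 1-|z|$ --- is the intended one from \cite{LLR1} and is what makes both the necessity step ($[M(\chi_I)]^\alpha\ge1$ on $S(I)$) and the covering step work. Second, for the covering it is cleaner to take the $I_k$ to be the connected components of the open set $\{M_{\mathbb{T}}g>t\}$ rather than a Calder\'on--Zygmund stopping family: any admissible arc $I$ with average exceeding $t$ lies inside a single component $I_k$, whence $S(I)\subset S(I_k)$ and $\sum_k|I_k|=|\{M_{\mathbb{T}}g>t\}|$ exactly; one must also dispose of the trivial case $\{M_{\mathbb{T}}g>t\}=\mathbb{T}$, where the Carleson hypothesis still bounds $\mu(\mathbb{D})$. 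With those conventions fixed, your sketch is a complete and correct proof, and arguably more useful to the reader than the paper's bare citation.
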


The estimates of the next result follow from the results of Luecking---see Theorem 3.1 of \cite{Lue}.

\begin{lemma}\label{12}
Let $\mu$ be a positive Borel measure on $\mathbb{D}$ and k $\in$ $\mathbb{N}$. If either $2 \leq p=q$ or $0<p <q<\infty$, the following conditions are equivalent:
\begin{itemize}
  \item[(i)]
$
  \int_{\mathbb{D}}|f^{(k)}(z)|^{q} \mathrm{~d} \mu(z) \lesssim \|f\|_{\mathcal{H}^{p}}^{q}$
  for all $f \in \mathcal{H}^{p}.$
  \item[(ii)] $\mu(S(I)) \lesssim |I|^{(1+kp)q/p}$ for all $I$.
\end{itemize}
\end{lemma}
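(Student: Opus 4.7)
The plan is to derive this lemma directly from Theorem 3.1 of Luecking's paper \cite{Lue}, which is the standard derivative-type Carleson embedding characterization for Hardy spaces; the two ingredients already assembled in the preliminaries (Lemma \ref{cvb} and the Cauchy-kernel test family) will do most of the work.

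For the implication (i) $\Rightarrow$ (ii), I would test the inequality on the normalized Cauchy-type family
$$
f_a(z) = \frac{(1-|a|^2)^{N-1/p}}{(1-\bar a z)^N}, \qquad a\in\mathbb{D},
$$
with $N$ chosen so that $N > k+1/p$. A routine computation gives $\|f_a\|_{\mathcal{H}^p}\approx 1$, while for $z$ in the Carleson window $S(I_a)$ associated to the arc $I_a$ of length $1-|a|$ centred at $a/|a|$, one has $|1-\bar a z|\approx 1-|a|^2$ and hence $|f_a^{(k)}(z)|\gtrsim (1-|a|^2)^{-k-1/p}$. Plugging $f_a$ into (i) and restricting the integration to $S(I_a)$ yields $(1-|a|^2)^{-(k+1/p)q}\,\mu(S(I_a)) \lesssim 1$, which rearranges to $\mu(S(I_a))\lesssim |I_a|^{(1+kp)q/p}$. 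Since an arbitrary sub-arc $I$ of $\mathbb{T}$ is comparable to some $I_a$, condition (ii) follows.

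For the reverse direction (ii) $\Rightarrow$ (i) in the range $0<p<q<\infty$, I would use the Cauchy-formula pointwise bound
$$
|f^{(k)}(z)| \lesssim (1-|z|^2)^{-k}\,\bigl[M(|f^*|^{1/\alpha})(z/|z|)\bigr]^{\alpha},
$$
valid for a suitable $\alpha$ with $p\alpha>1$ and with $f^*$ denoting the boundary trace of $f\in\mathcal{H}^p$. The target estimate in (i) then reduces to showing that $[M((\cdot)^{1/\alpha})]^{\alpha}$ is bounded from $\mathcal{L}^p(\mathbb{T})$ into $\mathcal{L}^q(d\nu)$ for the rescaled measure $d\nu(z)=(1-|z|^2)^{-kq}\,d\mu(z)$. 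By Lemma \ref{cvb} this is equivalent to $\nu$ being a $q/p$-Carleson measure. To deduce the latter from (ii), I would decompose each window $S(I)$ into dyadic annular layers $A_j=\{z\in S(I): 2^{-j-1}|I|\le 1-|z|\le 2^{-j}|I|\}$, cover each $A_j$ by $\approx 2^j$ Carleson windows of arclength $2^{-j}|I|$, and sum using (ii); the resulting geometric series converges exactly when $p<q$, producing $\nu(S(I))\lesssim |I|^{q/p}$.

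The main obstacle is the endpoint case $2\le p=q$, where the margin that drives the dyadic summation above disappears. Here one has to follow Luecking's original route: an atomic decomposition of $\mathcal{H}^p$ into blocks adapted to Carleson windows, combined with Khintchine's inequality (which requires $p\ge 2$) to pass from square-function control to the $\mathcal{L}^p$ norm of $f^{(k)}$ with respect to $\mu$. The exclusion of $0<p=q<2$ in the statement precisely reflects the failure of this Khintchine step for $p<2$, and I would simply cite Luecking \cite{Lue} for this endpoint rather than reproducing the atomic decomposition.
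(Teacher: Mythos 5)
Your proposal is correct in substance, but it is worth noting that the paper itself offers no proof of this lemma at all: it simply attributes the estimates to Theorem 3.1 of Luecking \cite{Lue}. What you have written is therefore a genuine reconstruction rather than a parallel of the paper's argument. Your necessity direction (testing on $f_a(z)=(1-|a|^2)^{N-1/p}(1-\bar a z)^{-N}$, localizing to $S(I_a)$ where $|1-\bar a z|\approx 1-|a|$, and reading off $\mu(S(I_a))\lesssim|I_a|^{(1+kp)q/p}$) is the standard and correct computation; only $N>1/p$ is actually needed for $\|f_a\|_{\mathcal H^p}\approx 1$, and one should dispose of arcs with $|I|$ bounded below by a separate (trivial) covering remark. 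Your sufficiency argument for $0<p<q$ is also sound and is a nice use of the machinery the paper already sets up: the Cauchy-estimate plus nontangential-maximal bound $|f^{(k)}(z)|\lesssim(1-|z|^2)^{-k}[M(|f^*|^{1/\alpha})(z/|z|)]^{\alpha}$ (valid once $p\alpha>1$, so that $|f|^{1/\alpha}$ is dominated by the Poisson integral of $|f^*|^{1/\alpha}$) reduces (i) to Lemma \ref{cvb} for $d\nu=(1-|z|^2)^{-kq}\,d\mu$, and your dyadic layering of $S(I)$ gives $\nu(S(I))\lesssim|I|^{q/p}\sum_j 2^{j(1-q/p)}$, which indeed converges exactly when $p<q$. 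For the endpoint $2\le p=q$ you fall back on Luecking, which is exactly what the paper does for the whole lemma, so nothing is lost. The one claim I would soften is your parenthetical diagnosis that the exclusion of $0<p=q<2$ is "precisely" the failure of a Khintchine step: Khintchine's inequality itself is valid for all $0<p<\infty$; what actually breaks down for $p=q<2$ is that the simple Carleson-window condition is no longer known to be (and is generally believed not to be) equivalent to the embedding of derivatives --- this is a well-known difficulty in its own right, not merely an artifact of one inequality in Luecking's proof. That remark does not affect the validity of your argument in the stated parameter ranges.
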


The little oh version of the preceding result was obtained in Theorem 1 in \cite{PPP} and can be formulated as follows:

\begin{lemma}\label{2b}
Let $\mu$ be a positive Borel measure on $\mathbb{D}$ and k $\in$ $\mathbb{N}$. If either $2 \leq p=q$ or $0<p <q<\infty$, the following conditions are equivalent:
\begin{itemize}
  \item[(i)] If $\left\{f_{j}\right\}$ is a bounded sequence in $\mathcal{H}^{p}$ and $f_{j}(z) \rightarrow 0$ for every $z \in \mathbb{D}$, then
  \begin{align}
  \lim _{j \rightarrow \infty} \int_{\mathbb{D}}\left|f_{j}^{(k)}(z)\right|^{q} \mathrm{~d} \mu(z)=0 .
  \end{align}
 \item[(ii)] The limits
  \begin{align}
\lim _{|I| \rightarrow 0} \frac{\mu\left(S(I)\right)}{|I|^{(1+kp)q/p}}=0
  \end{align}
hold uniformly for $I \in \mathbb{T}$.
\end{itemize}
\end{lemma}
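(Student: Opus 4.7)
The plan is to view this as the standard ``vanishing'' (compact) counterpart of Lemma \ref{12}, and to prove the two implications by a split-domain argument and a test-function argument respectively. I interpret the limit in (ii) as $|I|\to 0^+$, uniformly in $I\subset\mathbb{T}$.

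\textbf{Step 1: (ii) $\Rightarrow$ (i).} Given $\varepsilon>0$, use (ii) to pick $\delta\in(0,1)$ so that $\mu(S(I))\le\varepsilon|I|^{(1+kp)q/p}$ whenever $|I|<\delta$. Split $\mu=\mu_1+\mu_2$, where $\mu_1$ is the restriction of $\mu$ to $\{|z|\le 1-\delta\}$ and $\mu_2$ the restriction to $\{|z|>1-\delta\}$. Then $\mu_2$ is a $(1+kp)q/p$-Carleson measure with Carleson constant $\lesssim\varepsilon$ (only arcs with $|I|<\delta$ matter since $\mu_2$ is supported near $\partial\mathbb{D}$), so Lemma \ref{12} yields
\[
\int_{\mathbb{D}}|f_j^{(k)}(z)|^q\,\mathrm{d}\mu_2(z)\lesssim\varepsilon\|f_j\|_{\mathcal{H}^p}^q\lesssim\varepsilon,
\]
uniformly in $j$. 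For the compactly supported part $\mu_1$, the standard estimate $\|f\|_\infty^{(1-\delta)}\lesssim(\delta)^{-1/p}\|f\|_{\mathcal{H}^p}$ (via subharmonicity) together with Cauchy's integral formula implies that a bounded sequence in $\mathcal{H}^p$ that goes to $0$ pointwise actually has $f_j^{(k)}\to 0$ uniformly on $\{|z|\le 1-\delta\}$, by normal family arguments. Hence $\int|f_j^{(k)}|^q\,\mathrm{d}\mu_1\to 0$. Letting $\varepsilon\to 0$ gives (i).

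\textbf{Step 2: (i) $\Rightarrow$ (ii).} Argue by contrapositive. If (ii) fails, then there exist $\varepsilon>0$ and arcs $I_j\subset\mathbb{T}$ with $|I_j|\to 0$ such that $\mu(S(I_j))\ge\varepsilon|I_j|^{(1+kp)q/p}$. Let $a_j\in\mathbb{D}$ have $a_j/|a_j|$ equal to the center of $I_j$ and $1-|a_j|=|I_j|$, and set
\[
f_j(z)=\frac{(1-|a_j|^2)^{\beta}}{(1-\overline{a_j}z)^{\beta+1/p}}
\]
for a large fixed $\beta>0$. A standard computation gives $\|f_j\|_{\mathcal{H}^p}\asymp 1$ and $f_j(z)\to 0$ for each $z\in\mathbb{D}$ because $|a_j|\to 1$. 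Differentiating $k$ times and using $|1-\overline{a_j}z|\asymp 1-|a_j|^2$ on the Carleson box $S(I_j)$, one finds $|f_j^{(k)}(z)|\gtrsim (1-|a_j|^2)^{-1/p-k}$ on $S(I_j)$, whence
\[
\int_{\mathbb{D}}|f_j^{(k)}|^q\,\mathrm{d}\mu\ge\int_{S(I_j)}|f_j^{(k)}|^q\,\mathrm{d}\mu\gtrsim(1-|a_j|^2)^{-q(1+kp)/p}\mu(S(I_j))\gtrsim\varepsilon,
\]
contradicting (i). This yields (ii).

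\textbf{Main obstacle.} The routine parts are the test-function norm estimate in Step 2 (a one-variable calculation of the standard Forelli–Rudin type) and the Carleson-measure splitting. The subtlest point is the assertion in Step 1 that $\mu_2$ has Carleson constant $O(\varepsilon)$: one needs to check that the supremum defining $\|\mu_2\|_{CM_{(1+kp)q/p}}$ is controlled using \emph{only} arcs $I$ with $|I|<\delta$, which uses the geometry of $S(I)$ (namely that $S(I)\cap\{|z|>1-\delta\}=\emptyset$ when $|I|\ge\delta$, after possibly adjusting $\delta$ by a fixed constant). Once this is in place, everything reduces cleanly to Lemma \ref{12} together with a Montel-type argument on compacta.
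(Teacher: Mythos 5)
First, a point of comparison: the paper does not prove this lemma at all --- it is quoted verbatim as Theorem 1 of \cite{PPP} (Pel\'aez), so there is no internal proof to measure your argument against. Your proof follows the standard route for such ``little oh'' Carleson statements, and Step 2 is correct as written: the test functions $f_j$, the Forelli--Rudin norm estimate $\|f_j\|_{\mathcal{H}^p}\asymp 1$, the pointwise convergence to $0$, and the lower bound $|f_j^{(k)}(z)|\gtrsim(1-|a_j|^2)^{-1/p-k}$ on $S(I_j)$ (valid because $|1-\overline{a_j}z|\asymp 1-|a_j|$ there) together give $\int_{S(I_j)}|f_j^{(k)}|^q\,\mathrm{d}\mu\gtrsim\varepsilon$, contradicting (i).

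The problem is in Step 1, exactly at the point you yourself single out as the main obstacle. The geometric claim you invoke --- that $S(I)\cap\{|z|>1-\delta\}=\emptyset$ when $|I|\ge\delta$ --- is false: by definition $S(I)=\{z:z/|z|\in I,\ 1-|I|\le|z|\}$, so \emph{every} Carleson box reaches all the way to the boundary, and $S(I)\cap\{|z|>1-\delta\}$ is nonempty for every nondegenerate arc $I$. You therefore cannot control $\|\mu_2\|_{CM_s}$, $s=(1+kp)q/p$, by simply discarding the long arcs. The conclusion $\|\mu_2\|_{CM_s}\lesssim\varepsilon$ is nevertheless true, but the correct justification is a covering argument: for $|I|\ge\delta$, cover $I$ by $N\asymp|I|/\delta$ arcs $I_m$ of length $\delta$; then $S(I)\cap\{|z|>1-\delta\}\subset\bigcup_m S(I_m)$, whence $\mu_2(S(I))\le N\varepsilon\delta^{s}\asymp\varepsilon|I|\,\delta^{s-1}\le\varepsilon|I|^{s}$, the last step using $s=(1+kp)q/p\ge1$ (which holds since $k\ge1$ and $q\ge p$). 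With this repair --- plus the implicit observation that $\mu$ is locally finite, so that $\mu_1$ is a finite measure to which your Montel-type argument on $\{|z|\le1-\delta\}$ applies --- Step 1 goes through and the proof is complete.
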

\begin{remark}\label{Hp}
    For the case $k=0$, Lemma \ref{12} and Lemma \ref{2b} hold true whenever $0<p \leq q<\infty$ (see Theorem 3.4 of \cite{BJ} and Theorem 9.4 of \cite{DP}).
\end{remark}

\subsection{Area operators}
If $\zeta \in \mathbb{T}$ and $\gamma > 2$ are given,  the \emph{Korányi approach region} 
$\Gamma_{\gamma}(\zeta)$ with aperture $\gamma/2$ is defined by
$$
\Gamma(\zeta):= \Gamma_\gamma(\zeta)=\left\{z \in \mathbb{D}:|\zeta-z|<\frac \gamma 2\left(1-|z|\right)\right\}.
$$

For every $z \in \mathbb{D}$, let us denote
$$
I(z)=\{\zeta \in \partial \mathbb{D}: z \in \Gamma(\zeta)\}.
$$
It is clear that $I(z)$ is an open arc on $\partial \mathbb{D}$ with center 
$z /|z|$ whenever $z \neq 0$. Moreover, $|I(z)| \asymp 1-|z|.$

Let $\mu$ be a positive Borel measure on $\mathbb{D}$ and $s>0$. The \emph{area operator} $A_{\mu}^{s}$ acting on $H\left(\mathbb{D}\right)$ is the sublinear operator defined by
$$
A_{\mu}^{s}(f)(\zeta)=\left(\int_{\Gamma(\zeta)}|f(z)|^{s} \frac{\mathrm{d} \mu(z)}{(1-|z|)}\right)^{1 / s}
$$
Area operators are important both in analysis and geometry. They are related to, for example, the nontangential maximal functions,
Littlewood-Paley operators, multipliers, Poisson integrals, and tent spaces. For the study of boundedness and compactness of area operators $A_{\mu}^{s}$ on the Hardy space and weighted Bergman spaces in the unit disk, see \cite{GLW}, \cite{ZW}. The next estimate is the celebrated Calder\'{o}n's area theorem \cite{AC}. The variant we use can be found in \cite[Theorem 3.1]{PJ1} and \cite[Theorem D]{SJA}.

\begin{lemma}\label{sss}
Suppose that $f \in H(\mathbb{D})$ and $0<p<\infty$, then 
$$
\|f\|_{\mathcal{H}^{p}}^{p}	\approx \int_{\mathbb{T}} \left(\int_{\Gamma(\zeta)}|f^{\prime}(z)|^{2} \mathrm{d} A(z)\right)^{p/2} |\mathrm{d} \zeta|.
$$
\end{lemma}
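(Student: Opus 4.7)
The plan is to prove the two-sided equivalence by first settling the model case $p=2$ via a Littlewood--Paley identity and Fubini, and then extending to arbitrary $0<p<\infty$ through a good-$\lambda$ comparison between the area function and the non-tangential maximal function. Throughout, write $S(f)(\zeta)=\bigl(\int_{\Gamma(\zeta)}|f'(z)|^2\,dA(z)\bigr)^{1/2}$ and $N(f)(\zeta)=\sup_{z\in\Gamma(\zeta)}|f(z)|$.

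For $p=2$, the classical Littlewood--Paley identity yields
\[
\|f\|_{\mathcal{H}^2}^2\approx |f(0)|^2+\int_{\mathbb{D}}|f'(z)|^2(1-|z|)\,dA(z).
\]
Applying Fubini to the right-hand side of the lemma and using the geometric fact that $|\{\zeta\in\mathbb{T}:z\in\Gamma(\zeta)\}|\approx 1-|z|$ for every $z\in\mathbb{D}$, one gets
\[
\int_{\mathbb{T}}S(f)(\zeta)^2\,|d\zeta|=\int_{\mathbb{D}}|f'(z)|^2\,|\{\zeta:z\in\Gamma(\zeta)\}|\,dA(z)\approx\int_{\mathbb{D}}|f'(z)|^2(1-|z|)\,dA(z),
\]
which matches the right-hand side of the Littlewood--Paley identity up to the harmless term $|f(0)|^2$ (absorbed since every $\Gamma(\zeta)$ contains a fixed neighborhood of the origin). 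This proves the lemma at $p=2$.

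For general $p$, I would use the standard good-$\lambda$ method to derive $\|S(f)\|_{L^p(\mathbb{T})}\approx\|N(f)\|_{L^p(\mathbb{T})}$. Specifically, for sufficiently small $\varepsilon>0$ and every $\lambda>0$,
\[
|\{S(f)>2\lambda,\ N(f)\leq\varepsilon\lambda\}|\lesssim\varepsilon^2\,|\{S(f)>\lambda\}|,
\]
obtained by a Calder\'on--Zygmund stopping-time decomposition of the open set $\{N(f)>\varepsilon\lambda\}$ into maximal dyadic tents and then applying the $L^2$ estimate of the previous paragraph locally inside each tent. A symmetric inequality with the roles of $S(f)$ and $N(f)$ reversed is proved by the same technique, using local Cauchy estimates to dominate $N(f)$ by $S(f)$ on the complement of the exceptional set. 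Integrating each of these distributional bounds against $\lambda^{p-1}\,d\lambda$ via the layer-cake formula produces the equivalence $\|S(f)\|_{L^p(\mathbb{T})}\approx\|N(f)\|_{L^p(\mathbb{T})}$ for every $0<p<\infty$. Combining this with the classical identification $\|f\|_{\mathcal{H}^p}\approx\|N(f)\|_{L^p(\mathbb{T})}$ (subharmonicity of $|f|^p$ plus the Hardy--Littlewood maximal inequality for $p>1$, and the atomic/factorization theory for $p\leq 1$) closes the argument.

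The main obstacle is the good-$\lambda$ inequality bounding $S(f)$ in terms of $N(f)$: the stopping-time decomposition has to be carried out carefully so that the local $L^2$ bound can be invoked on each tent, and the passage to the range $0<p<1$ must avoid duality, which is why the good-$\lambda$ formalism (giving all $p$ simultaneously) is the natural route. The reverse direction, bounding $N(f)$ by $S(f)$, is somewhat easier since it reduces to a mean-value and Cauchy-estimate argument on the complement of a well-chosen bad set, and the $p=2$ calculation already encodes the core geometric information.
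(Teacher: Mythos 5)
The paper does not prove this lemma at all: it is quoted as Calder\'on's area theorem, with the precise variant attributed to \cite[Theorem 3.1]{PJ1} and \cite[Theorem D]{SJA}. Your proposal therefore cannot be ``the same approach as the paper''; what you have written is the classical proof of that cited theorem --- the Littlewood--Paley identity plus Fubini for $p=2$, followed by good-$\lambda$ inequalities comparing the area function $S(f)$ with the nontangential maximal function $N(f)$ and the identification $\|f\|_{\mathcal H^p}\approx\|N(f)\|_{L^p(\mathbb T)}$. As an outline this is the standard and correct route, though you should be aware that the two good-$\lambda$ inequalities are precisely where all the work lives (they are the content of the Calder\'on/Marcinkiewicz--Zygmund/Fefferman--Stein circle of results), so asserting them with a one-line description of the stopping-time decomposition is closer to re-citing the theorem than to proving it.

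There is one genuine flaw in the write-up: the treatment of the constant term. The equivalence as displayed (both in the paper's statement and in your conclusion) is false for constant functions, since then $f'\equiv 0$ and the right-hand side vanishes while $\|f\|_{\mathcal H^p}=|f(0)|$. The correct statement, and the one actually proved in the cited sources, is
\[
\|f\|_{\mathcal{H}^{p}}^{p}\approx |f(0)|^{p}+\int_{\mathbb{T}}\Bigl(\int_{\Gamma(\zeta)}|f'(z)|^{2}\,\mathrm{d}A(z)\Bigr)^{p/2}|\mathrm{d}\zeta|.
\]
Your claim that $|f(0)|^2$ is ``absorbed since every $\Gamma(\zeta)$ contains a fixed neighborhood of the origin'' cannot work: containing a neighborhood of the origin lets you recover $|f'(0)|$ from $\int_{\Gamma(\zeta)}|f'|^2$, not $|f(0)|$, and no lower bound for $|f(0)|$ in terms of $f'$ is possible. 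You should either carry the $|f(0)|^p$ term throughout (which is harmless for the applications in Theorems \ref{yyy} and \ref{77}, where the lemma is applied to $T_{\varphi,g,h}f$ and $S_{\varphi,g,h}f$, both of which vanish at $0$) or state the lemma for $f$ with $f(0)=0$.
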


For the proof of the following lemma, see Theorem 3.4 of \cite{GLW}.

\begin{lemma}\label{zzz}
Let $0<p, q, s<\infty$ and $\mu$ be a positive Borel measure on $\mathbb{D}$. If $0<p \leq q<\infty$, then
\begin{itemize}
\item[(i)] $A_{\mu}^{s}: \mathcal{H}^{p} \rightarrow \mathcal{L}^{q}$ is bounded if and only
 if $\mu$ is a $\left(1+\frac{s}{p}-\frac{s}{q}\right)$-Carleson measure;
\item[(ii)] $A_{\mu}^{s}: \mathcal{H}^{p} \rightarrow \mathcal{L}^{q}$ is compact if and only
if $\mu$ is a compact $\left(1+\frac{s}{p}-\frac{s}{q}\right)$-Carleson measure.
\end{itemize}
\end{lemma}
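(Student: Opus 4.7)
My plan is to follow the standard ``test-function plus Carleson-embedding plus non-tangential maximal function'' strategy that is typical for area operators on Hardy spaces. I would prove part (i) first, then obtain (ii) by localizing the Carleson condition and using weak convergence of normalized test functions.

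For sufficiency in (i), set $\alpha = 1 + s/p - s/q$ and assume that $\mu$ is an $\alpha$-Carleson measure. The key geometric observation is the tube-measure identity
\[
\left|\left\{\zeta \in \mathbb{T} : z \in \Gamma(\zeta)\right\}\right| \approx 1-|z|, \quad z \in \mathbb{D},
\]
which, via Fubini, converts tent integrals into direct $d\mu$-integrals. In the model case $s = q$ this yields
\[
\|A_\mu^s f\|_{\mathcal{L}^q}^q = \int_{\mathbb{T}} \int_{\Gamma(\zeta)} |f(z)|^s \frac{d\mu(z)}{1-|z|}\, |d\zeta| \approx \int_{\mathbb{D}} |f(z)|^q\, d\mu(z),
\]
and since $\alpha = q/p$ is exactly the Carleson exponent for $\mathcal{H}^p \hookrightarrow \mathcal{L}^q(d\mu)$, the right-hand side is $\lesssim \|f\|_{\mathcal{H}^p}^q$. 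For the general case $s \neq q$, I would combine this Fubini computation with the pointwise domination of $|f(z)|$ on $\Gamma(\zeta)$ by the non-tangential maximal function of $f$ at $\zeta$, and then invoke the standard equivalence between $\|f\|_{\mathcal{H}^p}$ and the $\mathcal{L}^p(\mathbb{T})$-norm of that maximal function together with a H\"older or interpolation argument to distribute the integrability exponents correctly.

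For necessity in (i), the natural test functions are the normalized reproducing-kernel-type functions
\[
f_a(z) = \frac{(1-|a|^2)^{N-1/p}}{(1-\bar{a} z)^{N}}, \quad a \in \mathbb{D},
\]
with $N > 1/p$ sufficiently large, which satisfy $\|f_a\|_{\mathcal{H}^p} \approx 1$ and $|f_a(z)| \gtrsim (1-|a|)^{-1/p}$ on the Carleson box $S(I_a)$ with $|I_a| \approx 1-|a|$. Since $\bigcup_{\zeta \in I_a} \Gamma(\zeta)$ captures an essential portion of $S(I_a)$, boundedness of $A_\mu^s$ forces
\[
\frac{\mu(S(I_a))}{|I_a|^\alpha} \lesssim 1,
\]
which is the $\alpha$-Carleson condition.

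Part (ii) proceeds by the same two templates with standard compactness twists. For sufficiency, split $\mu = \mu|_{r\mathbb{D}} + \mu|_{\mathbb{D} \setminus r\mathbb{D}}$: a bounded sequence $\{f_n\}$ in $\mathcal{H}^p$ with $f_n \to 0$ uniformly on compact subsets has $A^s_{\mu|_{r\mathbb{D}}}(f_n) \to 0$ trivially, while the tail contribution is small uniformly in $n$ by the compact $\alpha$-Carleson hypothesis once $r$ is close enough to $1$. For necessity, the functions $f_a$ converge weakly to $0$ as $|a| \to 1$, so compactness of $A_\mu^s$ forces $\mu(S(I_a)) / |I_a|^\alpha \to 0$. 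I expect the main technical obstacle to be the boundedness step when $s \neq q$: the clean Fubini collapse is no longer available, and handling it likely requires an honest tent-space duality argument in the spirit of Coifman--Meyer--Stein, or a careful application of Minkowski's inequality when $q/s > 1$, to match the exponents on the two sides of the estimate.
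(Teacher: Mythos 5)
The paper does not actually prove this lemma: it is quoted from Theorem 3.4 of \cite{GLW}, so the only internal point of comparison is the authors' detailed proof of the sibling result, Lemma \ref{66}, for the generalized area operator $\widetilde{A_{\mu}^{s}}$. Your skeleton --- Fubini via $|\{\zeta\in\mathbb{T}: z\in\Gamma(\zeta)\}|\approx 1-|z|$ to collapse the case $s=q$ onto the classical Carleson embedding, Forelli--Rudin test functions $f_a$ for necessity, and the restriction/weak-null-sequence scheme for compactness --- is the standard template and agrees in outline with what the paper does for Lemma \ref{66}.

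The genuine gap is the one you flag yourself and then leave unresolved: the boundedness step for $s\neq q$. Naming ``tent-space duality or Minkowski'' is not an argument; the exponent bookkeeping there is the entire content of the lemma. Concretely, the duality route (pair $(A_{\mu}^{s}f)^{s}$ against $\varphi\in\mathcal{L}^{(q/s)'}(\mathbb{T})$, dominate the pairing by $\int_{\mathbb{D}}|f|^{s}P\varphi\,\mathrm{d}\mu$ with the Poisson kernel, then H\"older with $t=p/s+(q-p)/q$ and two applications of the Carleson embedding, exactly as in the paper's proof of Lemma \ref{66}) only makes sense when $s\le q$, since otherwise $q/s<1$ and there is no dual pairing. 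Yet the lemma as stated allows $s>q$, and the paper genuinely uses that range: Theorem \ref{yyy} applies it with $s=2$ and $q$ possibly smaller than $2$. Your necessity argument has the same blind spot: the power-mean inequality needed to pass from $\int_{I_a}\bigl(\int_{\Gamma(\zeta)}\cdots\bigr)^{q/s}|\mathrm{d}\zeta|$ down to a lower bound involving $\mu(S(I_a))^{q/s}$ goes the right way only for $q\ge s$. (The paper's Lemma \ref{66} sidesteps this in the necessity direction by computing $\int_{\mathbb{T}}(A_{\mu}^{s}f_a)^{s}|\mathrm{d}\zeta|$ exactly by Fubini and then splitting $(A_{\mu}^{s}f_a)^{s}=(A_{\mu}^{s}f_a)^{\varepsilon s}(A_{\mu}^{s}f_a)^{(1-\varepsilon)s}$ with $\varepsilon s<q$ before applying H\"older --- a trick worth borrowing.) As written, your proposal therefore establishes the lemma only for $s\le q$; the case $s>q$ requires a separate argument (supplied in \cite{GLW}), and the fact that the analogous extension is still open for $\widetilde{A_{\mu}^{s}}$ --- this is precisely Conjecture \ref{conj} and Remark \ref{conj-remark} --- is evidence that it is not a routine variation.
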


To characterize the compact intertwining relations for the operator $I_g$, we define the following \emph{generalized area operator} which is induced
by a nonnegative measure $\mu$ and $0<s<\infty$:
\[\widetilde{A_{\mu}^{s}}(f)(\zeta)=\left(\int_{\Gamma(\zeta)}|f^{\prime}(z)|^{s} \frac{\mathrm{d} \mu(z)}{(1-|z|)}\right)^{1 / s}.\]

The following lemma is a key tool for the proof of Theorem \ref{thm-b}. 

\begin{lemma}\label{66}
Let $\mu$ be a positive Borel measure on $\mathbb{D}$, finite on compact subsets of $\D$. Suppose that $0<p  \leq q<\infty$ and $0< s<\infty$. We have the following two statements:
\begin{itemize}
\item[(i)] If $\widetilde{A_{\mu}^{s}}: \mathcal{H}^{p} \rightarrow \mathcal{L}^{q}$ is bounded
then $\mu$ is a $\left(1+\frac{s}{p}-\frac{s}{q}+s\right)$-Carleson measure;
\item[(ii)] If $\widetilde{A_{\mu}^{s}}: \mathcal{H}^{p} \rightarrow \mathcal{L}^{q}$ is compact
then $\mu$ is a vanishing $\left(1+\frac{s}{p}-\frac{s}{q}+s\right)$-Carleson measure.
\end{itemize}
Further, if $p, q, s$ satisfy one of the following
conditions:
\begin{itemize}
\item[(a)] $q=s$ and  either $2 \leq p=q <\infty$ or $0<p  <q<\infty$; 
\item[(b)] $q>s>p^2q/(pq+q-p)$.
\end{itemize}
 Then the two necessary conditions above in \emph{(i)} and \emph{(ii)} are also sufficient.
\end{lemma}
\begin{proof}
The proof is similar to the proof of Theorem 4 in \cite{PR}. The difference is that we need to consider the derivatives of the test functions. We provide the details for completeness.

For $a \in \mathbb{D}$, we consider the following test function
\begin{align}\label{tff}
f_{a, p}(z)=\frac{(1-|a|)^{1/p}}{(1-\overline{a}z)^{2/p}}, \quad z \in \overline{\mathbb{D}}. 
\end{align}
By Forelli-Rudin estimates (see \cite{BH}), we see that $f_{a, p} \in \mathcal{H}^{p}$, $\|f_{a, p}\|_{\mathcal{H}^{p}}\approx 1$. In addition, it is easy to see that
$$
|1-\overline{a}z|\approx1-|a|\approx|I_a|, \quad z \in S(a)
$$ and
\begin{align}\label{tf}
	|f'_{a,p}(z)|\approx \frac{1}{(1-|a|)^{1/p+1}}, \quad z \in S(a).
\end{align}
First we prove (i). We start with the case $q=s$. By \eqref{tf} and Fubini's theorem, we get
\begin{align}\label{liu1}
\begin{aligned}
	\frac{\mu(S(a))}{|I_a|^{s/p+s}}&\lesssim \int_{S(a)}|f'_{a,p}(z)|^sd\mu(z)\\
	&\leq \int_{\mathbb{D}}|f'_{a,p}(z)|^sd\mu(z)\\
	&=\int_{\mathbb{T}}\left( \int_{\Gamma(\zeta)}|f_{a, p}^{\prime}(z)|^{s} \frac{d \mu(z)}{\left(1-|z|\right)}\right) |\mathrm{d}\zeta| \\
	&=\left\|\widetilde{A_{\mu}^{s}} f_{a, p}\right\|_{\mathcal{L}^{q}\left(\mathbb{T}\right)}^{q}\lesssim \|f_{a, p}\|_{\mathcal{H}^{p}}^q\lesssim 1,
\end{aligned}
\end{align}
where the first equality comes from the fact that $\int_{\mathbb{T}} \chi_{\Gamma(\zeta)}(z)|\mathrm{d} \zeta| \approx 1-|z|$. Therefore, $\mu$ is a $\left(\frac{s}{p}+s\right)$-Carleson measure.\\
Next, we consider the case $q>s$.  We can see that
\begin{align}\label{liu11}
\left|f_{a,(q / s)^{\prime}}(\zeta)\right| \approx \frac{1}{(1-|a|^2)^{(1-s / q)}}
\end{align} as $\zeta \in I(z)$, and $z \in S(a)$.
Note that $\widetilde{A_{\mu}^{s}}: \mathcal{H}^{p} \rightarrow \mathcal{L}^{q}$ is bounded. By \eqref{tf}, \eqref{liu11}, Fubini's theorem and H\"{o}lder inequality, we obtain that
\begin{align}\label{liu12}
\begin{aligned}
	\frac{\mu(S(a))}{|I_a|^{1+s / p-s / q+s}} & \lesssim  \int_{S(a)}  |f'_{a, p}(z)|^s\left(\frac{1}{(1-|z|)} \int_{I(z)}\left|f_{a,(q / s)^{\prime}}(\zeta)\right| |d\zeta|\right) d \mu(z) \\
	& \leq \int_{\mathbb{T}}\left|f_{a,(q / s)^{\prime}}(\zeta)\right|\left(\int_{\Gamma(\zeta)}\left|f'_{a, p}(z)\right|^s \frac{d \mu(z)}{(1-|z|)}\right) |d\zeta| \\
	& \leq\left\|f_{a,(q / s)^{\prime}}\right\|_{\mathcal{H}^{(q / s)^{\prime}}}\left\|\widetilde{A_{\mu}^{s}} f_{a, p}\right\|_{\mathcal{L}^{q}\left(\mathbb{T}\right)}^s \lesssim \|f_{a, p}\|_{\mathcal{H}^{p}}^s\lesssim 1.
\end{aligned}
\end{align}
Thus $\mu$ is a $(1+s / p-s / q+s)$-Carleson measure.\\
Finally,  we consider the case $q<s$. Let $1<\beta<\alpha$ satisfing $\frac{\beta}{\alpha}=\frac{q}{s}$. By Fubini's theorem and H\"{o}lder's inequality, we get
\begin{align}\label{LLR1}
\begin{aligned}
 \mu(S(a))&=\int_{\mathbb{D}} \I_{S(a)}(z) d \mu(z) \\
& \approx(1-|a|)^{ s / p+s} \int_{\mathbb{D}} \I_{S(a)}(z)\left|f'_{a, p}(z)\right|^s \frac{1}{(1-|z|)} \int_{I(z)} |d\zeta| d \mu(z) \\
& =(1-|a|)^{ s / p+s} \int_{\mathbb{T}}\left(\int_{\Gamma(\zeta)} \I_{S(a)}(z)\left|f'_{a, p}(z)\right|^s \frac{d \mu(z)}{(1-|z|)}\right)^{1 / \alpha+1 / \alpha^{\prime}} |d\zeta| \\
& \leq(1-|a|)^{ \frac{s / p+s}{\alpha}}\left(\int_{\mathbb{T}}\left(\int_{\Gamma(\zeta)}\left|f'_{a, p}(z)\right|^s \frac{d \mu(z)}{(1-|z|)}\right)^{\beta / \alpha} |d\zeta|\right)^{1 / \beta}  \\
& \qquad \times\left(\int_{\mathbb{T}}\left(\int_{\Gamma(\zeta)} \I_{S(a)}(z) \frac{d \mu(z)}{(1-|z|)}\right)^{\beta^{\prime} / \alpha^{\prime}} |d \zeta|\right)^{1 / \beta^{\prime}}.
\end{aligned}
\end{align}
According to the estimate in (3.6) of \cite{LLR1}, we can estimate the second factor on the right side of the  above inequality as follows:
\begin{align}
\begin{aligned}
&\left(\int_{\mathbb{T}}\left(\int_{\Gamma(\zeta)}\, \I_{S(a)}(z) \frac{d \mu(z)}{(1-|z|)}\right)^{\beta^{\prime} / \alpha^{\prime}} |d \zeta|\right)^{\beta / \beta^{\prime}}\\
&\leq\mu(S(a))^{\beta / \beta^{\prime}} \left(\sup _{z \in \mathbb{D}} \frac{\mu(S(a) \cap S(z))}{(1-|z|)}\right)^{\beta / \alpha^{\prime}-\beta / \beta^{\prime}}\\
&=\mu(S(a))^{\beta-1}\left(\sup _{z \in \mathbb{D}} \frac{\mu(S(a) \cap S(z))}{(1-|z|)}\right)^{\beta / \alpha^{\prime}-\beta / \beta^{\prime}}.
\end{aligned}
\end{align}
Inserting this into \eqref{LLR1}, we have 
\begin{align}\label{A}
\mu(S(a))\lesssim (1-|a|)^{ (s / p+s)\frac{\beta}{\alpha}}\left\|\widetilde{A_{\mu}^{s}} f_{a, p}\right\|_{\mathcal{L}^{q}}^q \cdot \left(\sup _{z \in \mathbb{D}} \frac{\mu(S(a) \cap S(z))}{(1-|z|)}\right)^{\beta / \alpha^{\prime}-\beta / \beta^{\prime}}.
\end{align}
We define $d\mu_r(z)=\I_{D(0, r)}d\mu(z)$ for $0<r<1$. It is easy to see that  $$\left\|\widetilde{A_{\mu_r}^{s}}\right\|_{\mathcal{H}^p \rightarrow \mathcal{L}^{q}} \leq\left\|\widetilde{A_{\mu}^{s}}\right\|_{\mathcal{H}^p \rightarrow \mathcal{L}^{q}}.$$
Putting these estimates together, we have
$$
\begin{aligned}
&\frac{\mu_r(S(a))}{|I_a|^{1+s / p-s / q+s}} \\
& \quad \lesssim\left\|\widetilde{A_{\mu_r}^{s}} f_{a, p}\right\|_{\mathcal{L}^{q}}^q(1-|a|)^{\frac{q}{p}+q-1-s / p+s / q-s}\left(\sup _{z \in \mathbb{D}} \frac{\mu_r(S(a) \cap S(z))}{(1-|z|)}\right)^{\beta / \alpha^{\prime}-\beta / \beta^{\prime}} \\
& \quad =\left\|\widetilde{A_{\mu_r}^{s}} f_{a, p}\right\|_{\mathcal{L}^{q}}^q\left(\sup _{z: S(z) \subset S(a)} \frac{\mu_r(S(a) \cap S(z))}{(1-|a|^2)^{s(q-p+pq)  /(p q)}(1-|z|)}\right)^{1-q / s} \\
& \quad \lesssim\left\|\widetilde{A_{\mu_r}^{s}} f_{a, p}\right\|_{\mathcal{L}^{q}}^q\left(\sup _{z: S(z) \subset S(a)} \frac{\mu_r(S(z))}{(1-|z|)^{(1+s / p-s / q+s) }}\right)^{1-q / s}, \quad a \in \mathbb{D}.
\end{aligned}
$$
Consequently,
$$
\begin{aligned}
&\sup_{a \in \mathbb{D}}\frac{\mu_r(S(a))}{|I_a|^{1+s / p-s / q+s}}\\
& \quad\lesssim \left\|\widetilde{A_{\mu}^{s}}\right\|^q_{\mathcal{H}^p \rightarrow \mathcal{L}^{q}}\left(\sup_{a \in \mathbb{D}}\sup _{z: S(z) \subset S(a)} \frac{\mu_r(S(z))}{(1-|z|)^{(1+s / p-s / q+s) }}\right)^{1-q / s}\\
& \quad =\left\|\widetilde{A_{\mu}^{s}}\right\|^q_{\mathcal{H}^p \rightarrow \mathcal{L}^{q}}\left(\sup_{a \in \mathbb{D}}\frac{\mu_r(S(a))}{(1-|a|)^{(1+s / p-s / q+s) }}\right)^{1-q / s}.
\end{aligned}
$$
Therefore,
$$
\sup _{a \in \mathbb{D}, r\in(0,1)} \frac{\mu_r(S(a))}{|I_a|^{1+s / p-s / q+s}} \lesssim\left\|\widetilde{A_{\mu}^{s}}\right\|_{\mathcal{H}^p \rightarrow \mathcal{L}^{q}} ^s .
$$
It then follows from Fatou's lemma by letting $r\to 1^-$ that
\begin{align}\label{B}
\sup _{a \in \mathbb{D}} \frac{\mu(S(a))}{|I_a|^{1+s / p-s / q+s}} \lesssim\left\|\widetilde{A_{\mu}^{s}}\right\|_{\mathcal{H}^p \rightarrow \mathcal{L}^{q}}^s .
\end{align}
Thus $\mu$ is a $(1+s / p-s / q+s)$-Carleson measure.

Now we prove (ii). Suppose $\widetilde{\mathcal{A}^{s}_{\mu}}: \mathcal{H}^{p} \rightarrow \mathcal{L}^{q}$ is compact. For each $a \in \mathbb{D}$, let $f_{a, p}$ be given by \eqref{tff}. It is noted that $\left\|f_{a, p}\right\|_{\mathcal{H}^{p}} \lesssim 1$ uniformly in $a \in \mathbb{D}$  and $\left\{f_{a, p}\right\}$ converges uniformly to zero on compact subsets of $\mathbb{D}$, as $|a| \rightarrow 1$. Hence
$$
\lim _{|a| \rightarrow 1}\left\|\widetilde{\mathcal{A}^{s}_{\mu}}\left(f_{a, p}\right)\right\|_{\mathcal{L}^{q}}=0 .
$$
Inequalities \eqref{liu1}, \eqref{liu12} and \eqref{B} imply that
$$
\lim _{|a| \rightarrow 1} \frac{\mu(S(a))}{|I_a|^{1+s / p-s / q+s}} =0 .
$$
Thus $\mu$ is a vanishing $(1+s/p-s/q+s)$-Carleson measure.

To prove sufficiency in our last assertion, suppose first that condition $(a)$ holds in our assumption.
By Fubini's theorem and Lemma \ref{12}, we get
\begin{align}\label{V11}
\begin{aligned}
\left\|\widetilde{A_{\mu}^{s}} f\right\|_{\mathcal{L}^{q}\left(\mathbb{T}\right)}^{q}
&= \int_{\mathbb{T}}\left( \int_{\Gamma(\zeta)}|f^{\prime}(z)|^{s} \frac{d \mu(z)}{\left(1-|z|\right)}\right) |\mathrm{d}\zeta| \\
& =\int_{\mathbb{D}}|f^{\prime}(z)|^{s} \mathrm{d} \mu(z) \\
& \leq\|\mu\|_{CM_{(1+p)\frac{s}{p}}}\|f\|_{\mathcal{H}^{p}}^{q}.
\end{aligned}
\end{align}
Hence $\widetilde{A_{\mu}^{s}}:\mathcal H^p\to\mathcal L^q$ is bounded.

Suppose next that $q>s> \frac{p^2q}{pq+q-p}$. Let $t=s+(1-s/q)/(1+1/p)>s$.
Then $p<t$ and $(t/s)'/(q/s)'=s+1+s/p-s/q$. Let $M$ be the Hardy-Littlewood maximal function. Then by Lemma \ref{cvb} we have
\begin{align}\label{mqts}
\|M\|_{\mathcal L^{(q/s)'}(\mathbb{T})\rightarrow \mathcal L^{(t/s)'}(\mu)}^{(t/s)'}\approx \sup_{a\in\D}\frac{\mu(S(a))}{(1-|a|)^{s+1+s/p-s/q}}.
\end{align}
Thus, by duality's theorem,  Fubini's Theorem, H\"{o}lder's inequality and  \eqref{mqts}, we have that
\begin{align}\label{CM11}
\begin{aligned}
	\left\|\widetilde{ A^{s}_{\mu}}f\right\|_{\mathcal{L}^{q}(\mathbb{T})}^{s}=&\left\|(\widetilde{ A^{s}_{\mu}}f)^s\right\|_{\mathcal{L}^{q/s}(\mathbb{T})}\\
 \leq&\sup _{\|h\|_{\mathcal{L}^{\left(q / s\right)^{\prime}}} \leq 1} \int_{\mathbb{T}}|h(\zeta)|\left(\int_{\Gamma(\zeta)}\left|f^{\prime}(z)\right|^{s} \frac{\mathrm{d} \mu(z)}{1-|z|}\right)  |\mathrm{d}\zeta|\\
	=& \sup _{\|h\|_{\mathcal{L}^{\left(q / s\right)^{\prime}}} \leq 1}\int_{\mathbb{D} }\left|f^{\prime}(z)\right|^{s} \left(\frac{1}{|I|} \int_{I(z)}|h(\zeta)| |\mathrm{d} \zeta|\right) \mathrm{d} \mu(z)\\
	\leq& \sup _{\|h\|_{\mathcal{L}^{\left(q / s\right)^{\prime}}} \leq 1}\|f^{\prime}\|^{s}_{\mathcal{L}^{t}(\mu)}\|M(h)\|_{\mathcal{L}^{(t / s)^{\prime}}(\mu)}\\
	\leq &\sup _{\|h\|_{\mathcal{L}^{\left(q / s\right)^{\prime}}} \leq 1}\|D\|^s_{\mathcal H^p \rightarrow \mathcal{L}^t(\mu)}\|f\|^s_{\mathcal H^p} \|M\|_{\mathcal{L}^{(q / s)^{\prime}}\rightarrow \mathcal{L}^{(t / s)^{\prime}}(\mu)}\|h\|_{\mathcal{L}^{(q / s)^{\prime}}}\\
	 \lesssim&\left(\sup_{a\in\D}\frac{\mu(S(a))}{(1-|a|)^{s+1+s/p-s/q}}\right)^{s/t+1/(t/s)'}\|f\|^s_{\mathcal H^p}.
\end{aligned}
\end{align}
where $D$ denotes the differentiation operator. Hence $\widetilde{A_{\mu}^{s}}:\mathcal H^p\to\mathcal L^q$ is bounded.

Next we consider compactness. Suppose that $q>s> \frac{p^2q}{pq+q-p}$ and let $\mu$ be a vanishing $(s+1+s/p-s/q)$-Carleson measure. Then $\mu$ is a $(s+1+s/p-s/q)$-Carleson measure, and so a finite measure in $\D$. Let $t=s+(1-s/q)/(1+1/p)>s$, then $(1+p)t/p=s+1+s/p-s/q$. By Lemma \ref{2b}, $D: \mathcal H^p \rightarrow \mathcal L^t(\mu)$ is compact. Let $\left\{f_{n}\right\}$ be a bounded sequence in $\mathcal{H}^{p}$. Then we can choose a subsequence $\left\{f_{n_{k}}\right\}$  that uniformly on compact subsets to some $f \in \mathcal{H}^{p}$ and a subsequence $\left\{f'_{n_{k}}\right\}$ that converges in $\mathcal{L}^{t}(\mu)$. Write $g_{n_{k}}=f_{n_{k}}-f$. By \eqref{CM11}, we have
$$
\left\|\widetilde{ A^{s}_{\mu}}(g_{n_{k}})\right\|_{\mathcal{L}^{q}}^{s}\lesssim\|g_{n_{k}}^{\prime}\|^{s}_{\mathcal{L}^{t}(\mu)}.
$$
Then 
\begin{align}\label{C11}
\lim _{k \rightarrow \infty}\left\|\widetilde{ A^{s}_{\mu}}\left(g_{n_{k}}\right)\right\|_{\mathcal{L}^{q}}^{s}=0 .
\end{align}
Two applications of Minkowski's inequality gives
$$
\left|||\widetilde{A^{s}_{\mu}}\left(f_{n_{k}}\right)||_{\mathcal{L}^{q}}-||\widetilde{A^{s}_{\mu}}(f)||_{\mathcal{L}^{q}}\right| \leq\left\|\widetilde{A^{s}_{\mu}}\left(g_{n_{k}}\right)\right\|_{\mathcal{L}^{q}} \rightarrow 0, \quad k \rightarrow \infty .
$$
Together with \eqref{C11}, we get
$$
\lim _{k \rightarrow \infty}\left\|\widetilde{ A^{s}_{\mu}}\left(f_{n_{k}}\right)\right\|_{\mathcal{L}^{q}}=\left\|\widetilde{ A^{s}_{\mu}}\left(f\right)\right\|_{\mathcal{L}^{q}}.
$$
Moreover, since $\left\{f_{n_{k}}\right\}$ converges uniformly on compact subsets of $\mathbb{D}$ to $f$, by Theorem $5.3$ in \cite{ES}, then $\varphi_{k}(\zeta)=$ $\left(\int_{\Gamma(\zeta)}\left|f^{\prime}_{n_{k}}(z)\right|^{s} \frac{\mathrm{d} \mu{(z))}}{1-|z|}\right)^{1 / s}$ converges to 
$$\varphi(\zeta)=\left(\int_{\Gamma(\zeta)}|f^{\prime}(z)|^{s} \frac{\mathrm{d} \mu(z)}{1-|z|}\right)^{1 / s}$$ for each $\zeta \in \mathbb{T}$. Therefore by Lemma $1$ in \cite{DP} yields
$$
\lim _{k \rightarrow \infty}\left\|\widetilde{A^{s}_{\mu}}\left(f_{n_{k}}\right)-\widetilde{A^{s}_{\mu}}(f)\right\|_{\mathcal{L}^{q}}=\lim _{k \rightarrow \infty}\left\|\varphi_{k}-\varphi\right\|_{\mathcal{L}^{q}}=0,
$$
and thus $\widetilde{A^{s}_{\mu}}: \mathcal{H}^{p} \rightarrow \mathcal{L}^{q}$ is compact.\\
Suppose that condition $(a)$ holds, by \eqref{V11}, we also have that
\begin{align}\label{}
\lim _{k \rightarrow \infty}\left\|\widetilde{ A^{s}_{\mu}}\left(g_{n_{k}}\right)\right\|_{\mathcal{L}^{q}}=0,
\end{align}
by arguing as in the previous case we see that $\widetilde{A^{s}_{\mu}}: \mathcal{H}^{p} \rightarrow \mathcal{L}^{q}$ is compact.

\end{proof}

\section{Proof of Theorem \ref{thm-a}}\label{proof-of-thm-a}

In this section, we first consider the compact intertwining relation for $J_{g}$ and $C_{\varphi}$ from $\mathcal{H}^{p}$  to $\mathcal{H}^{q}$. The proof of Theorem~\ref{thm-a} then
follows immediately as a corollary. We also consider the compact intertwining relation for $J_{g}$ and $C_{\varphi}$ from $\mathcal{H}^{p}$  to $\mathcal{A}_{\alpha}^{q}$ at the end of
this section.

To prove Theorem \ref{thm-a}, for $\varphi\in S(\D)$ and $g,h\in H(\D)$, we consider the following operator
$$
\begin{aligned}
T_{\varphi, g, h}f(z)
&=\int_{0}^{\varphi(z)} f(w) g^{\prime}(w) \mathrm{d} w-\int_{0}^{z} f(\varphi(w)) h^{\prime}(w) \mathrm{d} w
\end{aligned}
$$
for $f\in \mathcal{H}^p$ and $z\in\D$.

To characterize the properties of $T_{\varphi,g, h}$, we define another integral operator as follows:
$$
\mathrm{I}_{\varphi}^{p, q}(u)(a)=\int_{\mathbb{D}}\left(\frac{1-|a|^{2}}{|1-\bar{a} \varphi(z)|^{2}}\right)^{1+\frac{2}{p}-\frac{2}{q}}|u(z)|^{2} (1-|\varphi(z)|^{2}) \mathrm{~d} A(z) .
$$

\begin{proposition}\label{yyy}
Let $0<p\leq q<\infty$. Assume that $\varphi \in$ $Aut(\mathbb{D})$,  and $g,h\in H(\mathbb D)$.
\begin{itemize}
\item[(i)] $T_{\varphi, g, h}$ is a bounded operator from $\mathcal{H}^{p}$ to $\mathcal{H}^{q}$ if and only if
\begin{align}
\sup _{a \in \mathbb{D}} \mathrm{I}_{\varphi}^{p, q}\left((g \circ \varphi-h)^{\prime}\right)(a)<\infty .
\end{align}
\item[(ii)]$T_{\varphi, g, h}$ is a compact operator from $\mathcal{H}^{p}$ to $\mathcal{H}^{q}$ if and only if $T_{\varphi, g, h}$ is bounded and
\begin{align}
\lim _{|a| \rightarrow 1} \mathrm{I}_{\varphi}^{p, q}\left((g \circ \varphi-h)^{\prime}\right)(a)=0 .
\end{align}
\end{itemize}
\end{proposition}

\begin{proof}
First, by Lemma \ref{sss}, since $(T_{\varphi, g, h} f)' (z)= (g\circ \varphi-h)'(z) f(\varphi(z))$,
\begin{align}\label{gamma}
\begin{aligned}
\|T_{\varphi, g, h} f\|_{\mathcal{H}^{q}}^{q}
 \approx \int_{\mathbb{T}} \left(\int_{\Gamma(\zeta)}|(g \circ \varphi-h)^{\prime}(z) C_{\varphi}(f)(z)|^{2} \mathrm{d} A(z)\right)^{q/2} |\mathrm{d} \zeta|.
\end{aligned}
\end{align}
Since $\varphi \in$ $Aut(\mathbb{D})$, there is a point $b \in \D$ such that
$$
\varphi(z)=e^{i\theta}\psi_b(z)=e^{i\theta}\frac{b-z}{1-\overline{b}z}, \quad  \theta \in [0, 2\pi).
$$
For $z \in \Gamma(\zeta)$ with $\zeta_1=\psi_b(\zeta)$, we have
\begin{align}\label{G}
\begin{aligned}
    |\zeta_1-\psi_b(z)|
    &=|\psi_b(\zeta)-\psi_b(z)|\\
    &=\left|\frac{1-|b|^2}{(1-\overline{b}\zeta)(1-\overline{b}z)}\right||\zeta-z|\\
    &<\frac{\gamma}{2}\left|\frac{(1-|b|^2)(1-|z|)}{(1-\overline{b}\zeta)(1-\overline{b}z)}\right|\\
    &< \frac{2\gamma}{1-|b|}(1-|\psi_b(z)|).
\end{aligned}
\end{align}
 Set $\eta=e^{i\theta}\zeta_1.$ By \eqref{G}, we have
$$
 |\eta-\varphi(z)|<\frac{2\gamma}{1-|b|}(1-|\varphi(z)|).
$$
Thus, $\varphi(z) \in \Gamma_{\gamma'}(\eta)$ with $\gamma'=\frac{4\gamma}{1-|b|}>1$. Let $w=\varphi(z)$, $\mu_{\varphi}=\nu_{\varphi} \circ \varphi^{-1}$, and $\mathrm{d} \nu_{\varphi}(z)
=\left|(g \circ \varphi-h)^{\prime}(z)\right|^{2}d A(z)$ in \eqref{gamma}. Then
$$
\begin{aligned}
\|T_{\varphi, g, h} f\|_{\mathcal{H}^{q}}^{q}
& \approx \int_{\mathbb{T}} \left(\int_{\Gamma_{\gamma'}(\eta)}|f(w)|^{2} \frac{(1-|w|)\mathrm{d}\mu_{\varphi}(w)}{1-|w|^2}\right)^{q/2} |\mathrm{d} \eta| \\
& =\left\| \left(\int_{\Gamma_{\gamma'}(\eta)}|f(w)|^{2} \frac{\mathrm{d}\mu(w)}{1-|w|^2}\right)^{1/2}\right\|_{\mathcal{L}^{q}(\mathbb{T})}^{q}\\
&=\left\|A_{\mu}^{2}(f)\right\|_{\mathcal{L}^{q}(\mathbb{T})}^{q},
\end{aligned}
$$
where $\mathrm{d}\mu(w)=(1-|w|^2)\mathrm{d}\mu_{\varphi}(w)$ and the 
last identity follows from the observation on page 3 of \cite{PL}.

Hence $T_{\varphi, g, h}: \mathcal{H}^{p} \rightarrow \mathcal{H}^{q}$ is bounded if and only if $A_{\mu}^{2}: \mathcal{H}^{p} \rightarrow \mathcal{L}^{q}\left(\mathbb{T}\right)$ is bounded. Thus, Lemma \ref{zzz} (i) implies that $\mathrm{d} \mu$ is a $(1+\frac{2}{p}-\frac{2}{q})$-Carleson measure. By \eqref{13}, this is equivalent to
$$
\sup _{a \in \mathbb{D}} \int_{\mathbb{D}}\left(\frac{1-|a|^{2}}
{|1-\bar{a} w|^{2}}\right)^{1+\frac{2}{p}-\frac{2}{q}}(1-|w|^2) \mathrm{d} \mu_{\varphi}(w)<\infty.
$$
Changing the variable back to $z$ proves (i).

Similarly we can prove (ii) using Lemma \ref{zzz} (ii). We omit the details.
\end{proof}

Now we are ready to characterize the compact intertwining relation for $J_{g}$ and $C_{\varphi}$ from $\mathcal{H}^{p}$  to $\mathcal{H}^{q}$.
\begin{theorem}\label{567}
Let $0<p\leq q<\infty.$
\begin{itemize}
\item[(i)] If $\frac q{q+1}\leq p<q$, then $J_{g}: \mathcal{H}^{p} \rightarrow \mathcal{H}^{q}$
 compactly intertwines all composition operators $C_{\varphi}$ which are bounded both on $\mathcal{H}^{p}$
 and $\mathcal{H}^{q}$ if and only if $g$ $\in$ $\mathcal{B}^{1+\frac{1}{q}-\frac{1}{p}}_{0}$;
\item[(ii)] If $p=q$, then $J_{g}: \mathcal{H}^{p} \rightarrow \mathcal{H}^{p}$ compactly
intertwines all composition operators $C_{\varphi}$ which are bounded on $\mathcal{H}^{p}$
if and only if $g$ $\in$ $\mathcal{VMOA}$.
\end{itemize}
\end{theorem}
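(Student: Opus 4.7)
My plan is to reduce the compact intertwining relation to a compactness question for the auxiliary operator $T_{\varphi,g,g}$ from Theorem \ref{yyy}. A short computation with the defining integrals gives the identity
\[
(C_\varphi J_g - J_g C_\varphi)f(z) = \int_0^{\varphi(z)} f(w)g'(w)\,dw - \int_0^z f(\varphi(w))g'(w)\,dw = T_{\varphi,g,g}f(z),
\]
so $J_g\colon \mathcal{H}^p\to\mathcal{H}^q$ intertwines $C_\varphi$ compactly if and only if $T_{\varphi,g,g}\colon \mathcal{H}^p\to\mathcal{H}^q$ is compact.

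The ``if'' direction is immediate in both cases: in (i), $g\in\mathcal{B}_0^{1+1/q-1/p}$ makes $J_g$ compact by Lemma \ref{181}(iv); in (ii), $g\in\mathcal{VMOA}$ makes $J_g$ compact by Lemma \ref{181}(v). Composing a compact operator with a bounded one preserves compactness, so $T_{\varphi,g,g}=C_\varphi J_g - J_g C_\varphi$ is compact for every admissible $\varphi\in S(\mathbb{D})$.

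For the ``only if'' direction I plug in rotations $\varphi_\theta(z)=e^{i\theta}z$, which are isometric on every Hardy space. Theorem \ref{yyy}(ii) applied to $T_{\varphi_\theta,g,g}$, followed by the substitution $w=e^{i\theta}z$, yields for each $\theta\in[0,2\pi]$
\[
\lim_{|a|\to 1}\int_{\mathbb{D}}\left(\frac{1-|a|^2}{|1-\bar{a}w|^2}\right)^{1+\frac{2}{p}-\frac{2}{q}}\bigl|e^{i\theta}g'(w)-g'(e^{-i\theta}w)\bigr|^2(1-|w|^2)\,dA(w)=0.
\]
I then average this identity in $\theta$ over $[0,2\pi]$ and exchange the limit and the $\theta$-integral by dominated convergence. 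A Taylor-coefficient computation, exploiting that the frequency $+1$ carried by $e^{i\theta}$ never meets the non-positive frequencies appearing in $g'(e^{-i\theta}w)$, gives
\[
\frac{1}{2\pi}\int_0^{2\pi}\bigl|e^{i\theta}g'(w)-g'(e^{-i\theta}w)\bigr|^2\,d\theta \;\ge\; |g'(w)|^2,
\]
so the averaged identity forces
\[
\lim_{|a|\to 1}\int_{\mathbb{D}}\left(\frac{1-|a|^2}{|1-\bar{a}w|^2}\right)^{1+\frac{2}{p}-\frac{2}{q}}|g'(w)|^2(1-|w|^2)\,dA(w)=0.
\]
Using \eqref{b}, in case (ii) the exponent equals $1$ and the integrand becomes $(1-|\psi_a(w)|^2)|g'(w)|^2$, so \eqref{c} delivers $g\in\mathcal{VMOA}$. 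In case (i) the integrand rewrites as $|g'(w)|^2(1-|w|^2)^{2/q-2/p}(1-|\psi_a(w)|^2)^s$ with $s=1+2/p-2/q>1$, and the little-oh companion of the identification $\mathcal{F}(2,2/q-2/p,s)=\mathcal{B}^{1+1/q-1/p}$ recalled after \eqref{40} yields $g\in\mathcal{B}_0^{1+1/q-1/p}$.

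The main technical point is the uniform-in-$\theta$ majorant needed for dominated convergence. The idea is to split the integrand by the triangle inequality, control the $|g'(w)|^2$-piece using the known boundedness of $J_g$ (Lemma \ref{181}(i),(ii)) rewritten as a Carleson-type integral condition through the $\mathcal{F}$-space identification, and reduce the $|g'(e^{-i\theta}w)|^2$-piece to the same quantity after the substitution $u=e^{-i\theta}w$, since $|ae^{-i\theta}|=|a|$.
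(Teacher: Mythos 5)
Your proposal is correct and follows essentially the same route as the paper: reduction of the commutator to $T_{\varphi,g,g}$, the ``if'' direction via Lemma \ref{181}, and the ``only if'' direction by testing against rotations, passing the limit through the $\theta$-average by dominated convergence (with the majorant supplied by the boundedness of $J_g$), and extracting $|g'|$ by a Fourier-averaging inequality. The only cosmetic difference is that in case (i) you average the integral condition and apply Parseval in $\theta$ before invoking the little-oh $\mathcal{F}(p,q,s)=\mathcal{B}^{\alpha}$ identification, whereas the paper first converts to the pointwise quantity $(1-|a|^2)^{1+1/q-1/p}|e^{i\theta}g'(e^{i\theta}a)-g'(a)|$ and then averages with the triangle inequality.
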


\begin{proof}
First, we consider (i). If $g$ $\in$ $\mathcal{B}^{1+\frac{1}{q}-\frac{1}{p}}_{0}$, the operator
 $J_{g}$ is compact from $\mathcal{H}^{p}$  to $\mathcal{H}^{q}$ by Lemma \ref{181}. Hence
$$
\left.\left.C_{\varphi}\right|_{\mathcal{H}^{q}} J_{g}\right|_{\mathcal{H}^{p} \rightarrow
	\mathcal{H}^{q}}-\left.\left.J_{g}\right|_{\mathcal{H}^{p} \rightarrow \mathcal{H}^{q}} C_{\varphi}\right|_{\mathcal{H}^{p}}
$$
is compact for every $C_{\varphi}$ bounded both on $\mathcal{H}^{p}$ and $\mathcal{H}^{q}$.

For the necessary part of (i), by (i) of Lemma \ref{181}, $J_{g}$ is bounded from $\mathcal{H}^{p}$  to $\mathcal{H}^{q}$ if and only if $g$ $\in$ $\mathcal{B}^{1+\frac{1}{q}-\frac{1}{p}}$. Putting $\varphi(z)=e^{\mathbf{i} \theta} z$ for $\theta \in$ $[0,2 \pi]$, by Proposition \ref{yyy} (ii),  we have
\begin{align}\label{19}
\begin{aligned}
0=&\lim _{|a| \rightarrow 1} \int_{\mathbb{D}}\left(\frac{1-|a|^{2}}{\left|1-\bar{a} e^{\mathbf{i} \theta} z\right|^{2}}\right)^{1+\frac{2}{p}-\frac{2}{q}}\left|e^{\mathbf{i} \theta} g^{\prime}\left(e^{\mathbf{i} \theta} z\right)-g^{\prime}(z)\right|^{2}(1-|z|^{2}) \mathrm{~d} A(z) \\
=&\lim _{|a| \rightarrow 1}\int_{\mathbb{D}}\left(1-|\psi_{a}(z)|^{2}\right)^{1+\frac{2}{p}-\frac{2}{q}}\left|e^{\mathbf{i} \theta} g^{\prime}\left(e^{\mathbf{i} \theta} z\right)-g^{\prime}(z)\right|^{2}(1-|z|^{2})^{\frac{2}{q}-\frac{2}{p}} \mathrm{~d} A(z) \\
\approx & \lim _{|a| \rightarrow 1}(1-|a|^{2})^{1+\frac 1q-\frac 1p}\left|e^{\mathbf{i} \theta} g^{\prime}\left(e^{\mathbf{i} \theta} a\right)-g^{\prime}(a)\right| ,
\end{aligned}
\end{align}
where the last line follows from \eqref{40}.

We estimate the upper bound of last formula in \eqref{19} as follows
$$
\begin{aligned}
&(1-|z|^{2})^{1+\frac 1q-\frac 1p}\left|e^{\mathbf i \theta} g^{\prime}\left(e^{\mathbf{i} \theta} z\right)-g^{\prime}(z)\right| \\
 \leq &(1-|e^{\mathbf{i} \theta }z|^{2})^{1+\frac 1q-\frac 1p}\left|g^{\prime}\left(e^{\mathbf{i} \theta} z\right)\right|+(1-|z|^{2})^{1+\frac 1q-\frac 1p}\left|g^{\prime}(z)\right|  \\
\leq& 2\|g\|_{\mathcal{B}^{1+\frac 1q-\frac 1p}}<\infty,
\end{aligned}
$$
so the upper bound for the estimate in \eqref{19} is independent of $\theta$. 

We write $g(z)=\sum_{n=0}^{\infty} a_{n} z^{n}$ and integrate the right-hand side of \eqref{19} with respect to $\theta$ from 0 to $2 \pi$ as follows
\begin{align}\label{31}
\begin{aligned}
0 &=\int_{0}^{2 \pi} \lim _{|z| \rightarrow 1}(1-|z|^{2})^{1+\frac 1q-\frac 1p}\left|e^{\mathbf{i} \theta} g^{\prime}\left(e^{\mathbf{i} \theta} z\right)-g^{\prime}(z)\right| \mathrm{d} \theta \\
&=\lim _{|z| \rightarrow 1} \int_{0}^{2 \pi}(1-|z|^{2})^{1+\frac 1q-\frac 1p}\left|e^{\mathbf{i} \theta} g^{\prime}\left(e^{\mathbf{i} \theta} z\right)-g^{\prime}(z)\right| \mathrm{d} \theta \\
&=\lim _{|z| \rightarrow 1} (1-|z|^{2})^{1+\frac 1q-\frac 1p}\int_{0}^{2 \pi}\left|\sum_{n=1}^{\infty} n a_{n} z^{n-1}\left(e^{\mathrm{i} n \theta}-1\right)\right| \mathrm{d} \theta \\
& \geq \lim _{|z| \rightarrow 1} (1-|z|^{2})^{1+\frac 1q-\frac 1p}\left|\sum_{n=1}^{\infty} n a_{n} z^{n-1} \int_{0}^{2 \pi}\left(e^{\mathrm{i} n \theta}-1\right) \mathrm{d} \theta\right|\\
&=2 \pi  \lim _{|z| \rightarrow 1}(1-|z|^{2})^{1+\frac 1q-\frac 1p}\left|g^{\prime}(z)\right|,
\end{aligned}
\end{align}
where the dominated convergence theorem is applied to the second line. Thus $g \in$ $\mathcal{B}^{1+\frac 1q-\frac 1p}_{0}$.

Next, we consider (ii). If $g$ $\in$ $\mathcal{VMOA}$, we can see $J_{g}$ is compact on $\mathcal{H}^{p}$ by Lemma \ref{181}. Hence
$$
\left.\left.C_{\varphi}\right|_{\mathcal{H}^{p}} J_{g}\right|_{\mathcal{H}^{p} \rightarrow \mathcal{H}^{p}}-\left.\left.J_{g}\right|_{\mathcal{H}^{p} \rightarrow \mathcal{H}^{p}} C_{\varphi}\right|_{\mathcal{H}^{p}}
$$
is compact for every $C_{\varphi}$ bounded on $\mathcal{H}^{p}$.

For the necessary part of (ii), by (ii) of Lemma \ref{181}, $J_{g}$ is bounded on $\mathcal{H}^{p}$
if and only if $g$ $\in$ $\mathcal{BMOA}$. Putting $\varphi(z)=e^{\mathbf{i} \theta} z$ for $ \theta \in$ $[0,2 \pi]$, by Proposition \ref{yyy} (ii), we have
\begin{align}\label{30}
\begin{aligned}
0=&\lim _{|a| \rightarrow 1} \int_{\mathbb{D}}\frac{1-|a|^{2}}{\left|1-\bar{a} e^{\mathbf{i} \theta} z\right|^{2}}\left|e^{\mathbf{i} \theta} g^{\prime}\left(e^{\mathbf{i} \theta} z\right)-g^{\prime}(z)\right|^{2}(1-|z|^{2}) \mathrm{~d} A(z) \\
=&\lim _{|a| \rightarrow 1}\int_{\mathbb{D}}(1-|\psi_{a}(z)|^{2})\left|e^{\mathbf{i} \theta} g^{\prime}\left(e^{\mathbf{i} \theta} z\right)-g^{\prime}(z)\right|^{2}  \mathrm{~d} A(z).
\end{aligned}
\end{align}
Similarly to (i), we obtain an upper bound for \eqref{30} as follows
$$
\begin{aligned}
&\int_{\mathbb{D}}(1-|\psi_{a}(z)|^{2})\left|e^{\mathbf{i} \theta} g^{\prime}\left(e^{\mathbf{i} \theta} z\right)-g^{\prime}(z)\right|^{2} \mathrm{~d} A(z) \\
 \leq &\int_{\mathbb{D}}\left(1-|\psi_{a}(e^{\mathbf{i} \theta }z)|^{2}\right)\left| g^{\prime}\left(e^{\mathbf{i} \theta} z\right)\right|^{2} \mathrm{~d} A(z) \\ &+\int_{\mathbb{D}}\left(1-|\psi_{a}(z)|^{2}\right)\left|g^{\prime}(z)\right|^{2} \mathrm{~d} A(z)  \\
\leq& 2\|g\|_{\mathcal{B M O A}}<\infty,
\end{aligned}
$$
where the last line follows from \eqref{40}. Hence, \eqref{30} has an upper bound independent of $\theta$.

Again, write $g(z)=\sum_{n=0}^{\infty} a_{n} z^{n}$, and integrate the right-hand side of \eqref{30} with respect to $\theta$ from 0 to $2 \pi$ as follows
$$
\begin{aligned}
0 &=\int_{0}^{2 \pi} \lim _{|a| \rightarrow 1}\int_{\mathbb{D}}(1-|\psi_{a}(z)|^{2})\left|e^{\mathbf{i} \theta} g^{\prime}\left(e^{\mathbf{i} \theta} z\right)-g^{\prime}(z)\right|^{2} \mathrm{~d} A(z) \mathrm{d} \theta \\
&=\lim _{|a| \rightarrow 1} \int_{0}^{2 \pi}\int_{\mathbb{D}}(1-|\psi_{a}(z)|^{2})\left|e^{\mathbf{i} \theta} g^{\prime}\left(e^{\mathbf{i} \theta} z\right)-g^{\prime}(z)\right|^{2} \mathrm{~d} A(z) \mathrm{d} \theta \\
&=\lim _{|a| \rightarrow 1} \int_{\mathbb{D}}(1-|\psi_{a}(z)|^{2})\int_{0}^{2 \pi}\left|\sum_{n=1}^{\infty} n a_{n} z^{n-1}\left(e^{\mathrm{i} n \theta}-1\right)\right|^{2} \mathrm{d} \theta \mathrm{~d} A(z)\\
& \geq \lim _{|a| \rightarrow 1} \int_{\mathbb{D}}(1-|\psi_{a}(z)|^{2})\left|\sum_{n=1}^{\infty} n a_{n} z^{n-1} \int_{0}^{2 \pi}\left(e^{\mathrm{i} n \theta}-1\right) \mathrm{d} \theta\right|^{2} \mathrm{~d} A(z)\\
&\approx \lim _{|a| \rightarrow 1}\int_{\mathbb{D}}(1-|\psi_{a}(z)|^{2})\left|g^{\prime}(z)\right|^{2}\mathrm{~d} A(z),
\end{aligned}
$$
where the dominated convergence theorem and Fubini's theorem are applied to the second and third lines, respectively. Thus, by \eqref{c}, we obtain $g \in$ $\mathcal{V M O A}$.
\end{proof}

We can now prove our first main theorem.

\begin{proof}[Proof of Theorem \ref{thm-a}]
By (ii) of Theorem \ref{567}, $[C_\varphi,J_g]\in\mathcal{K}(\mathcal{H}^{p})$ for every $C_\varphi\in \mathscr B(\mathcal{H}^{p})$
	if and only if $g\in \mathcal{VMOA}$, which, according to Lemma \ref{181}, is equivalent to $J_g\in\mathcal{K}(\mathcal{H}^{p})$.
	Hence $D(J_g)$ maps bounded operators into $\mathcal{K}(\mathcal{H}^{p})$ if and only if $J_g$ is a compact operator.
\end{proof}

In the remaining part of this section, we characterize the compact intertwining relation for $J_{g}$ and $C_{\varphi}$ from $\mathcal{H}^{p}$  to $\mathcal{A}_{\alpha}^{q}$. For this purpose, we define the integral operator
$$
\mathrm{I\!I}_{\varphi, \alpha}^{p, q}(u)(a)=\int_{\mathbb{D}}\left(\frac{1-|a|^{2}}{|1-\bar{a}
	 \varphi(z)|^{2}}\right)^{\frac{q}{p}}|u(z)|^{q} (1-|z|^{2})^{\alpha} \mathrm{~d} A(z) .
$$

Using Remark \ref{Hp}, the following theorem can be
proved similarly to Theorem 1 and Corollary 1 in \cite{ZC}, and hence we omit the proof.
\begin{theorem}\label{w}
Let $1<p \leq q<\infty$ and $\alpha>-1.$ Assume that $u \in H(\mathbb{D})$  and $\varphi \in S(\mathbb{D})$.
\begin{itemize}
\item[(i)] The weighted composition operator $u C_{\varphi}$ is bounded  from $\mathcal{H}^{p}$ into $\mathcal{A}_{\alpha}^{q}$ if and only if
\begin{align}\label{200}
\sup _{a \in \mathbb{D}} \mathrm{I\!I}_{\varphi, \alpha}^{p, q}(u)(a)<\infty .
\end{align}
\item[(ii)] The weighted composition operator $u C_{\varphi}$ is compact from $\mathcal{H}^{p}$ into $\mathcal{A}_{\alpha}^{q}$ if and only if $u C_{\varphi}$ is bounded and
\begin{align}
\lim _{|a| \rightarrow 1} \mathrm{I\!I}_{\varphi,\alpha}^{p, q}(u)(a)=0 .
\end{align}
\end{itemize}
\end{theorem}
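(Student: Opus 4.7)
The plan is to reduce both parts of the theorem to a Carleson-measure condition by passing to the pullback measure
$$
\mu_{u,\varphi,\alpha}(E) = \int_{\varphi^{-1}(E)} |u(z)|^{q}\,\mathrm{d} A_{\alpha}(z)
$$
on Borel subsets $E\subset\mathbb{D}$. A change of variable immediately gives
$$
\|u C_{\varphi} f\|_{\mathcal{A}_\alpha^{q}}^{q} = \int_{\mathbb{D}} |f(w)|^{q}\,\mathrm{d}\mu_{u,\varphi,\alpha}(w) \qquad (f\in\mathcal{H}^p),
$$
so $u C_\varphi\colon \mathcal{H}^p\to\mathcal{A}_\alpha^q$ is bounded (respectively, compact) if and only if the inclusion $\mathcal{H}^p\hookrightarrow\mathcal{L}^q(\mathrm{d}\mu_{u,\varphi,\alpha})$ is bounded (respectively, compact).

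For \emph{(i)}, I would invoke the classical Carleson embedding theorem for Hardy spaces, which for $1<p\le q<\infty$ identifies boundedness of $\mathcal{H}^p\hookrightarrow\mathcal{L}^q(\mathrm{d}\mu)$ with $\mu$ being a $(q/p)$-Carleson measure (Lemma~\ref{12} with $k=0$ covers the case $p<q$, and Carleson's original theorem handles the case $p=q$). Combining this with the integral characterization~\eqref{13}, the Carleson condition on $\mu_{u,\varphi,\alpha}$ is equivalent to
$$
\sup_{a\in\mathbb{D}}\int_{\mathbb{D}}|\psi_a'(w)|^{q/p}\,\mathrm{d}\mu_{u,\varphi,\alpha}(w)<\infty.
$$
Substituting $|\psi_a'(w)|=(1-|a|^{2})/|1-\bar a w|^{2}$ and pulling the integral back through $\varphi$ rewrites this supremum, up to an absolute constant, as $\sup_a\mathrm{I\!I}_{\varphi,\alpha}^{p,q}(u)(a)$, which is precisely~\eqref{200}.

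For \emph{(ii)} the same scheme applies, with Lemma~\ref{2b} in place of Lemma~\ref{12}: compactness of the embedding corresponds to $\mu_{u,\varphi,\alpha}$ being a vanishing $(q/p)$-Carleson measure, and the same pull-back computation turns this into $\lim_{|a|\to 1}\mathrm{I\!I}_{\varphi,\alpha}^{p,q}(u)(a)=0$. The one subtle ingredient is the equivalence between compactness of the embedding and compactness of $u C_\varphi$ as an operator; for this I would use that a norm-bounded sequence in $\mathcal{H}^p$ is a normal family, so some subsequence converges uniformly on compact subsets of $\mathbb{D}$, and operator compactness can then be tested on such sequences tending to zero on compacta. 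I expect this compactness-equivalence step to be the main technical obstacle, but it should be essentially routine; once it is in hand, \emph{(ii)} follows from the vanishing Carleson characterisation together with the pull-back identity established in \emph{(i)}.
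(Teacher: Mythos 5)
Your proposal is correct and follows essentially the same route as the paper: pull the measure back through $\varphi$, identify boundedness of $uC_\varphi$ with the Carleson embedding $\mathcal{H}^p\hookrightarrow\mathcal{L}^q(\mathrm{d}\mu_{u,\varphi})$, and convert the $q/p$-Carleson condition into the integral form via \eqref{13}. The only difference is cosmetic: for part (ii) the paper simply cites Theorem 1 of \cite{LW}, whereas you sketch the vanishing-Carleson argument (Lemma \ref{2b} plus the standard normal-family reduction) directly.
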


Using Corollary 4.3 in \cite{TZ1} and Theorem \ref{w}, the following result can be obtained immediately.
\begin{corollary}\label{10}
Let $1<p \leq q<\infty$ and  $\alpha>-1$. Assume that $\varphi \in$ $S(\mathbb{D})$ and  $g, h\in H(\mathbb D)$.
\begin{itemize}
\item[(i)] $T_{\varphi, g, h}$ is a bounded operator from $\mathcal{H}^{p}$ into $\mathcal{A}_{\alpha}^{q}$ if and only if
\begin{align}
\sup _{a \in \mathbb{D}} \mathrm{I\!I}_{\varphi, q+\alpha}^{p, q}\left((g \circ \varphi-h)^{\prime}\right)(a)<\infty .
\end{align}
\item[(ii)] $T_{\varphi, g, h}$ is a compact operator from $\mathcal{H}^{p}$ into $\mathcal{A}_{\alpha}^{q}$ if and only if $T_{\varphi, g}$ is bounded and
\begin{align}
\lim _{|a| \rightarrow 1} \mathrm{I\!I}_{\varphi, q+\alpha}^{p, q}\left((g \circ \varphi-h)^{\prime}\right)(a)=0 .
\end{align}
\end{itemize}
\end{corollary}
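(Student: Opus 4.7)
The plan is to reduce Corollary \ref{10} to Theorem \ref{w} via the derivative characterization of weighted Bergman norms. The starting observation is that $T_{\varphi,g,h}f(0)=0$ and, by the chain rule,
$$
(T_{\varphi,g,h}f)'(z) = g'(\varphi(z))\varphi'(z)f(\varphi(z)) - h'(z)f(\varphi(z)) = u(z)\,f(\varphi(z)),
$$
where $u := (g\circ\varphi - h)'$. In other words, $(T_{\varphi,g,h}f)' = u\cdot C_\varphi f$, which is precisely the weighted composition operator $uC_\varphi$ applied to $f$.

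Next I would invoke Lemma \ref{yzx} with $n=1$ to pass to an equivalent Bergman norm expressed through the first derivative. Using that the constant term of $T_{\varphi,g,h}f$ vanishes, this yields
$$
\|T_{\varphi,g,h}f\|_{\mathcal{A}_\alpha^q}^q \asymp \int_{\mathbb{D}}(1-|z|^2)^q|u(z)f(\varphi(z))|^q\,dA_\alpha(z) \asymp \|uC_\varphi f\|_{\mathcal{A}_{q+\alpha}^q}^q,
$$
the second comparison simply absorbing the factor $(1-|z|^2)^q$ into the weight and converting $dA_\alpha$ into $dA_{q+\alpha}$ up to multiplicative constants.

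Because this norm equivalence is linear in $f$, it applies to differences $f_n-f_m$ and therefore transfers both boundedness and compactness: $T_{\varphi,g,h}\colon\mathcal{H}^p\to\mathcal{A}_\alpha^q$ is bounded (resp.\ compact) if and only if the weighted composition operator $uC_\varphi\colon\mathcal{H}^p\to\mathcal{A}_{q+\alpha}^q$ is bounded (resp.\ compact). Since $q+\alpha>-1$, Theorem \ref{w} is available with $\alpha$ replaced by $q+\alpha$; applying it with the symbol $u=(g\circ\varphi - h)'$ yields exactly the stated characterizations, namely $\sup_{a}\mathrm{I\!I}_{\varphi,q+\alpha}^{p,q}(u)(a)<\infty$ for boundedness in (i), and the additional vanishing condition $\lim_{|a|\to 1}\mathrm{I\!I}_{\varphi,q+\alpha}^{p,q}(u)(a)=0$ for compactness in (ii).

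The argument is largely bookkeeping rather than analysis, and I do not anticipate a genuine obstacle. The one point demanding care is the weight exponent: the factor $(1-|z|^2)^q$ produced by the derivative characterization of Lemma \ref{yzx} must be combined with the base weight $(1-|z|^2)^\alpha$ to yield $(1-|z|^2)^{q+\alpha}$, which is precisely the weight featured in $\mathrm{I\!I}_{\varphi,q+\alpha}^{p,q}$ and matches the statement of the corollary.
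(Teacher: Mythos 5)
Your proposal is correct and follows essentially the same route as the paper: both compute $(T_{\varphi,g,h}f)'=(g\circ\varphi-h)'\cdot C_\varphi f$, use Lemma \ref{yzx} with $n=1$ (together with $T_{\varphi,g,h}f(0)=0$) to absorb the factor $(1-|z|^2)^q$ into the weight and obtain $\|T_{\varphi,g,h}f\|_{\mathcal{A}_\alpha^q}\asymp\|(g\circ\varphi-h)'C_\varphi f\|_{\mathcal{A}_{q+\alpha}^q}$, and then apply Theorem \ref{w} with $\alpha$ replaced by $q+\alpha$. No gaps.
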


\begin{proposition}\label{234}
Let $1<p\leq q<\infty$, $g \in H(\mathbb{D}),-1<\alpha<\infty,$ and $\gamma=\frac{\alpha+2}{q}-\frac{1}{p}$. Then we have the following two assertions.
\begin{itemize}
\item[(i)] If $p<q$ and $\gamma+1 \geq 0$, then $J_{g}: \mathcal{H}^{p} \rightarrow \mathcal{A}_{\alpha}^{q}$ compactly
intertwines all composition operators $C_{\varphi}$ which are bounded both on $\mathcal{H}^{p}$
 and $\mathcal{A}_{\alpha}^{q}$ if and only if $g$ $\in$ $\mathcal{B}^{1+\gamma}_{0}$.
\item[(ii)] If $p=q$, then $J_{g}: \mathcal{H}^{p} \rightarrow \mathcal{A}_{\alpha}^{p}$ compactly
 intertwines all composition operators $C_{\varphi}$ which are bounded both on $\mathcal{H}^{p}$ and $\mathcal{A}_{\alpha}^{p}$
  if and only if $g$ $\in$ $\mathcal{V M O A}_{p}^{1+(\alpha+1) / p}.$
\end{itemize}
\end{proposition}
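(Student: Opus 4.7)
My plan is to mimic the proof of Theorem~\ref{567}, substituting $\mathcal{A}_\alpha^q$ for $\mathcal{H}^q$ throughout and using Corollary~\ref{10}(ii) in place of Theorem~\ref{yyy}(ii). Sufficiency in both (i) and (ii) is immediate: by Lemma~\ref{18}(iii) and (iv), the hypothesis on $g$ already makes $J_g:\mathcal{H}^p\to\mathcal{A}_\alpha^q$ compact, and so $C_\varphi J_g-J_g C_\varphi$ is automatically compact for every $\varphi$ for which $C_\varphi$ is bounded on both $\mathcal{H}^p$ and $\mathcal{A}_\alpha^q$.

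For necessity, testing against the identity composition operator forces $J_g$ bounded, which by Lemma~\ref{18}(i)-(ii) places $g$ in $\mathcal{B}^{1+\gamma}$ in case (i) and in $\mathcal{BMOA}_p^{1+(\alpha+1)/p}$ in case (ii). I then test against the rotation $\varphi(z)=e^{\mathbf{i}\theta}z$, which is an isometry on both $\mathcal{H}^p$ and $\mathcal{A}_\alpha^q$. Since $(g\circ\varphi-g)'(z)=e^{\mathbf{i}\theta}g'(e^{\mathbf{i}\theta}z)-g'(z)$, Corollary~\ref{10}(ii) (applied with $h=g$) forces the vanishing of $\mathrm{I\!I}_{\varphi,q+\alpha}^{p,q}((g\circ\varphi-g)')(a)$ as $|a|\to 1$. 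After the substitution $b=ae^{-\mathbf{i}\theta}$ (so that $|a|=|b|$ and $|1-\bar a e^{\mathbf{i}\theta}z|=|1-\bar b z|$) and identity~\eqref{b}, this reads
\[
\lim_{|b|\to 1}\int_\mathbb{D}(1-|\psi_b(z)|^2)^{q/p}\,\bigl|e^{\mathbf{i}\theta}g'(e^{\mathbf{i}\theta}z)-g'(z)\bigr|^q\,(1-|z|^2)^{q(1+\gamma)-2}\,\mathrm{d}A(z)=0,
\]
which is the vanishing version of the $\mathcal{F}(q,q(1+\gamma)-2,q/p)$ integral. In case (i), $q/p>1$ and this recovers $\mathcal{B}_0^{1+\gamma}$ via the characterization of \cite{Z1,Z2}; in case (ii), $q/p=1$ and this realizes the $\mathcal{BMOA}$-type family $\mathcal{F}(p,p+\alpha-1,1)=\mathcal{BMOA}_p^{1+(\alpha+1)/p}$, whose little-oh analog is $\mathcal{VMOA}_p^{1+(\alpha+1)/p}$.

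From here the endgame is verbatim that of \eqref{19}--\eqref{31}: the integral condition reduces to the pointwise decay $\lim_{|b|\to 1}(1-|b|^2)^{1+\gamma}|e^{\mathbf{i}\theta}g'(e^{\mathbf{i}\theta}b)-g'(b)|=0$ in case (i), with its BMOA-analog in case (ii); the key observation is that the left-hand side is dominated uniformly in $\theta$ by $2\|g\|_{\mathcal{B}^{1+\gamma}}$ (resp.\ $2\|g\|_{\mathcal{BMOA}_p^{1+(\alpha+1)/p}}$). I then integrate in $\theta$ on $[0,2\pi]$, interchange $\lim_{|b|\to 1}$ and $\int_0^{2\pi}$ by dominated convergence, and expand $g(z)=\sum_{n\geq 0}a_nz^n$; the triangle inequality $\int_0^{2\pi}|f|\,\mathrm{d}\theta\geq|\int_0^{2\pi}f\,\mathrm{d}\theta|$ together with the orthogonality $\int_0^{2\pi}(e^{\mathbf{i}n\theta}-1)\,\mathrm{d}\theta=-2\pi$ for $n\geq 1$ extracts $2\pi g'(b)$ from the rotated difference, yielding the desired boundary vanishing of $g$ in each case. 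The main technical point I expect to be the uniform-in-$\theta$ passage from the $\mathcal{F}$-integral to the pointwise decay of $u_\theta=e^{\mathbf{i}\theta}g'(e^{\mathbf{i}\theta}\cdot)-g'$; this rests on the $\mathcal{F}(p,q,s)$ machinery of \cite{Z1,Z2} already exploited in Theorem~\ref{567}, so no essentially new difficulty should arise beyond careful bookkeeping of the parameter $\gamma=(\alpha+2)/q-1/p$.
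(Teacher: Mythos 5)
Your proposal is correct and follows exactly the route the paper intends: the paper's own ``proof'' of Proposition~\ref{234} is simply the remark that it is similar to that of Theorem~\ref{567}, and your adaptation --- sufficiency via Lemma~\ref{18}(iii)--(iv), necessity by testing against rotations, invoking Corollary~\ref{10}(ii), identifying the resulting integral as the vanishing $\mathcal{F}(q,q(1+\gamma)-2,q/p)$ condition, and finishing with the $\theta$-integration/orthogonality argument of \eqref{19}--\eqref{31} --- is precisely the omitted argument. The one caveat worth recording is that Corollary~\ref{10} (and the Jensen-type step $\int_0^{2\pi}|F|^{p}\,\mathrm{d}\theta\gtrsim\bigl|\int_0^{2\pi}F\,\mathrm{d}\theta\bigr|^{p}$ needed in case (ii)) is available only for $1<p\le q$, a restriction inherited from Theorem~\ref{w} that the proposition's hypothesis ``same assumptions as Lemma~\ref{18}'' does not impose; this limitation is present in the paper's own sketch as well and is not a defect introduced by your write-up.
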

\begin{proof}
The proof is similar to that of Theorem \ref{567} and hence we omit it.
\end{proof}

\section{Proof of Theorem \ref{thm-b}}\label{proof-of-thm-b}

For $\varphi \in S(\mathbb{D})$ and $f, u \in H(\mathbb{D})$, a weighted differential 
composition operator is defined by
$$
u C_{\varphi}^{\prime} f:=u \cdot(f \circ \varphi)^{\prime}.
$$
For a similar role that the operator $T_{\varphi, g, h}$ played in the previous section, we define the operator
$$
\begin{aligned}
S_{\varphi, g, h}f(z)
&=\int_{0}^{\varphi(z)} f^{\prime}(w) g(w) \mathrm{d} w-\int_{0}^{z} (f \circ \varphi)^{\prime}(w) h(w) \mathrm{d} w.
\end{aligned}
$$
and the integral operator
$$
\mathrm{I\!I\!I}_{\varphi}^{p, q}(u)(a)=\int_{\mathbb{D}}\left(\frac{1-|a|^{2}}{|1-\bar{a} \varphi(z)|^{2}}\right)^{(3+\frac{2}{p}-\frac{2}{q})}|u(z)|^{2} (1-|\varphi(z)|^{2}) \mathrm{~d} A(z),
$$
which we use to characterize the properties of $S_{\varphi, g, h}$. Further, as in the previous section, we describe boundedness and compactness of $S_{\varphi, g, h}:\mathcal H^p\to \mathcal H^q$ in terms of the generalized area operators, which leads to a simple proof of Theorem~\ref{thm-b}. Finally, at the end of this section, we consider the compact intertwining relation for $I_{g}$ and $C_{\varphi}$ from $\mathcal{H}^{p}$  to $\mathcal{A}_{\alpha}^{q}$.

\medskip

We start with the following characterization, which is the main ingredient in the proof of Theorem \ref{thm-b}.

\begin{proposition}\label{77}
 Assume that $\varphi \in$ $Aut(\mathbb{D})$,  and $g,h\in H(\mathbb D)$. Let $0<p\leq q<\infty$.
We have the following two assertions:
\begin{itemize}
\item[(i)] If $S_{\varphi, g, h}$ is a bounded operator from $\mathcal{H}^{p}$ to $\mathcal{H}^{q}$ then
\begin{align}
\sup _{a \in \mathbb{D}} \mathrm{I\!I\!I}_{\varphi}^{p, q}\left[(g \circ \varphi-h)\varphi'\right](a)<\infty .
\end{align}
\item[(ii)] If $S_{\varphi, g, h}$ is a compact operator from $\mathcal{H}^{p}$ to $\mathcal{H}^{q}$ then 
\begin{align}
\lim _{|a| \rightarrow 1} \mathrm{I\!I\!I}_{\varphi}^{p, q}
\left[(g \circ \varphi-h)\varphi'\right](a)=0 .
\end{align}
\end{itemize}
Moreover, if $p, q$ satisfy one of the following
conditions:
\begin{itemize}
\item[(a)] $p\leq q=2$;
\item[(b)] $q>2>p^2q/(pq+q-p)$.
\end{itemize}
Then the two conditions above in \emph{(i)} and \emph{(ii)} are also sufficient.
\end{proposition}

\begin{proof}
We note that
$$
\begin{aligned}
\left(C_{\varphi} I_{g}-I_{h} C_{\varphi}\right) f(z) &=C_{\varphi}\left(\int_{0}^{z} f'(w) g(w) \mathrm{d} w\right)-I_{h}(f \circ \varphi)(z) \\
&=\int_{0}^{\varphi(z)} f'(w) g(w) \mathrm{d} w-\int_{0}^{z} (f\circ\varphi)'(w) h(w) \mathrm{d} w,
\end{aligned}
$$
thus, by Lemma \ref{sss}, we get
$$
\begin{aligned}
\|S_{\varphi, g, h} f\|_{\mathcal{H}^{q}}^{q}
& \approx \int_{\mathbb{T}} \left(\int_{\Gamma(\zeta)}|(g \circ \varphi-h) C_{\varphi}^{\prime}(f)(z)|^{2} \mathrm{d} A(z)\right)^{q/2} |\mathrm{d} \zeta|. \\
\end{aligned}
$$
Let $w=\varphi(z)$, $\mu_{\varphi}=\nu_{\varphi} \circ \varphi^{-1}$, and $\mathrm{d} \nu_{\varphi}(z)=\left|(g \circ \varphi-h)(z)\varphi'(z)\right|^{2}d A(z)$. Then
$$
\begin{aligned}
\|S_{\varphi, g, h} f\|_{\mathcal{H}^{q}}^{q}
& \approx \int_{\mathbb{T}} \left(\int_{\Gamma_{\gamma'}(\eta)}|f^{\prime}(w)|^{2} \frac{(1-|w|)\mathrm{d}\mu_{\varphi}(w)}{1-|w|}\right)^{q/2} |\mathrm{d} \eta| \\
&=\left\| \left(\int_{\Gamma_{\gamma'}(\eta)}|f^{\prime}(w)|^{2} \frac{\mathrm{d}\mu(w)}{1-|w|}\right)^{1/2}\right\|_{\mathcal{L}^{q}(\mathbb{T})}^{q}\\
& =\left\|\widetilde{A_{\mu}^{2}}(f)\right\|_{\mathcal{L}^{q}(\mathbb{T})}^{q},
\end{aligned}
$$
where $\mathrm{d}\mu(w)=(1-|w|)\mathrm{d}\mu_{\varphi}(w)$ and $\gamma'$ is chosen as in the proof of Proposition~\ref{yyy}.

Hence $S_{\varphi, g, h}: \mathcal{H}^{p} \rightarrow \mathcal{H}^{q}$ is bounded if and only if $\widetilde{A_{\mu}^{2}}: \mathcal{H}^{p} \rightarrow \mathcal{L}^{q}\left(\mathbb{T}\right)$ is bounded. Thus, Lemma \ref{66} (i) implies that $\mathrm{d} \mu$ is a $(3+\frac{2}{p}-\frac{2}{q})$-Carleson measure. By \eqref{13}, this is equivalent to
$$
\sup _{a \in \mathbb{D}} \int_{\mathbb{D}}\left(\frac{\left(1-|a|^{2}\right)}{|1-\bar{a} w|^{2}}\right)^{(3+\frac{2}{p}-\frac{2}{q})} (1-|w|)\mathrm{d} \mu_{\varphi}(w)<\infty.
$$
Changing the variable back to $z$ completes the proof of (i).

Similarly, we can prove (ii) and sufficiency from Lemma \ref{66}. We omit the details.
\end{proof}

We are now ready to prove Theorem \ref{thm-b}.

\begin{proof}[Proof of Theorem \ref{thm-b}]
Let $0<p<\infty$, and suppose that $D(I_g)$ maps $\mathscr B(\mathcal{H}^p)$ into the ideal of compact operators. Let $\varphi(z)=e^{\mathbf{i} \theta} z$ for $\theta \in$ $[0,2 \pi]$. Then 
$$
	S_{\varphi,g, g}=[C_\varphi,I_g]\in\mathcal{K}(\mathcal{H}^{p}).
$$
Therefore, by Proposition \ref{77} (ii),
\begin{align}\label{z1}
\begin{aligned}
0=&\lim _{|a| \rightarrow 1} \int_{\mathbb{D}}\left(\frac{1-|a|^{2}}{\left|1-\bar{a} e^{\mathbf{i} \theta} z\right|^{2}}\right)^{3}\left|g\left(e^{\mathbf{i} \theta} z\right)-g(z)\right|^{2}(1-|z|^{2}) \mathrm{~d} A(z) \\
\approx & \lim _{|a| \rightarrow 1}\left|g\left(e^{\mathbf{i} \theta} a\right)-g(a)\right|^2\left(1-|a|^{2}\right)^{0},
\end{aligned}
\end{align}
where the last line follows from Lemmas 8 and 9 of \cite{ZC}. Thus, by the uniqueness theorem, the function $g$ is a complex scalar.

Conversely, if $g$ is a complex scalar $c$, then $I_g= M_c$ and clearly
$$
	D(I_g)T = [M_c,T] = 0 \in \mathcal{K}(\mathcal{H}^{p})
$$
for every $T\in \mathscr B(\mathcal{H}^p)$, which completes the proof.

\end{proof}

In the remaining part of this section, we characterize the compact intertwining relation for
 $I_{g}$ and $C_{\varphi}$ from $\mathcal{H}^{p}$  to $\mathcal{A}_{\alpha}^{q}$. 
 We first give a result on the boundedness and compactness of $u C_{\varphi}^{\prime}$
  from $\mathcal{H}^{p}$ to $\mathcal{A}_{\alpha}^{q}$. To this end, we define another integral operator as follows
$$
\mathrm{I\!V}_{\varphi, \alpha}^{p, q}(u)(a):=\int_{\mathbb{D}}\left(\frac{1-|a|^{2}}{|1-\bar{a} \varphi(z)|^{2}}\right)^{(1+p)q/p}\left|u(z) \varphi^{\prime}(z)\right|^{q} (1-|z|^{2})^{\alpha}\mathrm{~d} A(z).
$$

\begin{theorem}\label{3c}
Suppose that either $2 \leq p=q$ or $0<p <q<\infty$, and let $\alpha>-1$, $u \in H(\mathbb{D})$  and $\varphi \in S(\mathbb{D})$.
\begin{itemize}
\item[(i)] The operator $u C_{\varphi}^{\prime}:\mathcal H^p\to \mathcal A^q_\alpha$ is bounded if and only if
\begin{align}
\sup _{a \in \mathbb{D}} \mathrm{I\!V}_{\varphi, \alpha}^{p, q}(u)(a)<\infty.
\end{align}
\item[(ii)] The operator $u C_{\varphi}^{\prime}:\mathcal H^p\to \mathcal A^q_\alpha$ is compact if and only if $u C_{\varphi}^{\prime}$ is bounded and
\begin{align}
\lim _{|a| \rightarrow 1} \mathrm{I\!V}_{\varphi,\alpha}^{p, q}(u)(a)=0 .
\end{align}
\end{itemize}
\end{theorem}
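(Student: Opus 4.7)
The approach will be a direct adaptation of the proof of Theorem \ref{w}: $f$ is replaced throughout by $f'$, motivated by the identity $(f\circ\varphi)'(z)=\varphi'(z)f'(\varphi(z))$, and the Carleson characterization of $\mathcal{H}^p$ for \emph{derivatives} (Lemmas \ref{12} and \ref{2b} with $k=1$) is used in place of the characterization for function values.

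For (i), set $d\nu(z)=|u(z)\varphi'(z)|^q(1-|z|^2)^\alpha\,dA(z)$ and push it forward by $\mu:=\nu\circ\varphi^{-1}$; the definition of $uC_\varphi'$ gives
\[
\|uC_\varphi' f\|_{\mathcal{A}_\alpha^q}^q = \int_\mathbb{D}|f'(\varphi(z))|^q\,d\nu(z) = \int_\mathbb{D}|f'(w)|^q\,d\mu(w).
\]
Thus $uC_\varphi':\mathcal{H}^p\to\mathcal{A}_\alpha^q$ is bounded if and only if $\int_\mathbb{D}|f'|^q\,d\mu \lesssim \|f\|_{\mathcal{H}^p}^q$ for every $f\in\mathcal{H}^p$. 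By Lemma \ref{12} with $k=1$ (using $(1+kp)q/p=(1+p)q/p$), this is equivalent to $\mu$ being a $(1+p)q/p$-Carleson measure, and by \eqref{13} in turn equivalent to
\[
\sup_{a\in\mathbb{D}}\int_\mathbb{D}\left(\frac{1-|a|^{2}}{|1-\bar a w|^{2}}\right)^{(1+p)q/p}d\mu(w)<\infty.
\]
Reversing the change of variables rewrites this as $\sup_{a\in\mathbb{D}}\mathrm{I\!V}_{\varphi,\alpha}^{p,q}(u)(a)<\infty$, proving (i).

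For (ii), I repeat the same computation using Lemma \ref{2b} in place of Lemma \ref{12}: compactness of $uC_\varphi'$ reduces to $\mu$ being a vanishing $(1+p)q/p$-Carleson measure. The necessity is extracted from the test functions $f_{a}$ defined in \eqref{bm}, which are uniformly bounded in $\mathcal{H}^p$ and converge to $0$ on compact subsets as $|a|\to 1$, so that $\|uC_\varphi' f_a\|_{\mathcal{A}_\alpha^q}\to 0$ forces $\mathrm{I\!V}_{\varphi,\alpha}^{p,q}(u)(a)\to 0$ after the change of variables.

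The one delicate point is that Lemmas \ref{12} and \ref{2b} are stated only for $2\leq p=q$ or $p<q$, whereas the hypothesis $1<p\leq q$ in the theorem also allows the range $1<p=q<2$. I expect that borderline case to be the main obstacle and to need a separate argument — plausibly via a generalized area operator in the spirit of Lemma \ref{66} and Theorem \ref{77}, which would circumvent the reliance on Lemma \ref{12} at $p=q<2$. The remainder of the argument is a routine transcription of the proof of Theorem \ref{w}.
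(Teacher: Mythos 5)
Your proposal coincides with the paper's own proof: the paper likewise changes variables via $w=\varphi(z)$, pushes $\mathrm{d}\nu_{u,\varphi}(z)=|u(z)\varphi'(z)|^{q}(1-|z|^{2})^{\alpha}\,\mathrm{d}A(z)$ forward by $\varphi$, and invokes Lemma \ref{12} (resp.\ Lemma \ref{2b}) with $k=1$ together with \eqref{13} to arrive at the stated condition on $\mathrm{I\!V}_{\varphi,\alpha}^{p,q}(u)$. The ``delicate point'' you flag --- that Lemmas \ref{12} and \ref{2b} only cover $2\le p=q$ or $p<q$, while the hypothesis $1<p\le q$ also admits $1<p=q<2$ --- is a genuine observation, but the paper's proof cites those lemmas without comment and therefore shares exactly this limitation, so your write-up is at least as complete and rather more candid about the borderline case.
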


\begin{proof}
By definition, $u C_{\varphi}^{\prime}$ is bounded from $\mathcal{H}^{p}$ into $\mathcal{A}_{\alpha}^{q}$ if and only if
$$
\left\|\left(u C_{\varphi}^{\prime}\right) f\right\|_{\mathcal{A}_{\alpha}^{p}}^{q} \lesssim\|f\|_{\mathcal{H}^{p}}^{q},
$$  for all $f \in \mathcal{H}^{p}$.
Changing variables $w=\varphi(z)$ yields
\begin{align}\label{1a}
\int_{\mathbb{D}}|f^{\prime}(w)|^{q} \mathrm{~d} \mu_{u, \varphi}(w) \lesssim\|f\|_{\mathcal{H}^{p}}^{q},
\end{align}
where $\mu_{u, \varphi}=\nu_{u, \varphi} \circ \varphi^{-1}$ and $\mathrm{d} \nu_{u, \varphi}(z)
=\left|u(z) \varphi^{\prime}(z)\right|^{q} (1-|z|^{2})^{\alpha} \mathrm{~d} A(z)$. 
Hence by \eqref{13}, \eqref{1a} and Lemma \ref{12}, we have
$$
\sup _{a \in \mathbb{D}} \int_{\mathbb{D}}\left(\frac{1-|a|^{2}}{|1-\bar{a} w|^{2}}\right)^{(1+p)q/p} \mathrm{d} \mu_{u, \varphi}(w)<\infty.
$$
Changing the variable back to $z$ completes the proof of (i).

The proof (ii) is very similar to (i), and we just need Lemma \ref{2b}
to obtain the compactness of $u C_{\varphi}^{\prime}$ from $\mathcal{H}^{p}$ into $\mathcal{A}_{\alpha}^{q}$.
\end{proof}

By a direct calculation and Theorem \ref{3c}, the following corollary can be proved similarly to Corollary \ref{10}, and hence we omit the proof.

\begin{corollary}\label{4d}
Suppose that either $2 \leq p=q$ or $0<p <q<\infty$, and let $\alpha>-1$, $\varphi \in S(\mathbb{D})$, and $g, h\in H(\mathbb{D})$.
\begin{itemize}
\item[(i)]  $S_{\varphi, g, h}$ is a bounded operator from $\mathcal{H}^{p}$ into $\mathcal{A}_{\alpha}^{q}$ if and only if
\begin{align}
\sup _{a \in \mathbb{D}} \mathrm{I\!V}_{\varphi, q+\alpha}^{p, q}(g \circ \varphi-h)(a)<\infty .
\end{align}
\item[(ii)] $S_{\varphi, g, h}$ is a compact operator on $\mathcal{H}^{p}$ into $\mathcal{A}_{\alpha}^{q}$ if and only if $S_{\varphi, g}$ is bounded and
\begin{align}
\lim _{|a| \rightarrow 1} \mathrm{I\!V}_{\varphi, q+\alpha}^{p, q}(g \circ \varphi-h)(a)=0 .
\end{align}
\end{itemize}
\end{corollary}

\begin{proposition}\label{}
Suppose that either $2 \leq p=q$ or $0<p <q<\infty$, and let $g \in H(\mathbb{D}),-1<\alpha<\infty,$ and $\gamma=\frac{\alpha+2}{q}-\frac{1}{p}$. Then $I_{g}: \mathcal{H}^{p} \rightarrow \mathcal{A}_{\alpha}^{q}$ compactly intertwines all composition operators $C_{\varphi}$ which are bounded both on $\mathcal{H}^{p}$ and $\mathcal{A}_{\alpha}^{q}$ if and only if $g$ $\in$ $\mathcal{B}^{1+\gamma}_{0}$.
\end{proposition}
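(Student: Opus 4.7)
The plan is to parallel the proof of Theorem \ref{567} closely, substituting $I_g$ for $J_g$, $\mathcal{A}_\alpha^q$ for $\mathcal{H}^q$, Lemma \ref{7g} for Lemma \ref{181}, and Corollary \ref{4d} for Theorem \ref{yyy}. The sufficiency direction is immediate: if $g\in\mathcal{B}^{1+\gamma}_0$, Lemma \ref{7g}(ii) shows that $I_g:\mathcal{H}^p\to\mathcal{A}_\alpha^q$ is itself compact, so $C_\varphi I_g - I_g C_\varphi$ is compact for every bounded $C_\varphi$. For the converse, compact intertwining forces boundedness of $I_g$, hence $g\in\mathcal{B}^{1+\gamma}$ by Lemma \ref{7g}(i).

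For necessity proper, I would test with the rotation composition operators $\varphi_\theta(z)=e^{\i\theta}z$, which are bounded on both $\mathcal{H}^p$ and $\mathcal{A}_\alpha^q$. A direct computation (using $\varphi_\theta(0)=0$) gives $C_{\varphi_\theta}I_g - I_g C_{\varphi_\theta} = S_{\varphi_\theta,g,g}$, and Corollary \ref{4d}(ii) translates compactness into
$$\lim_{|a|\to 1}\int_{\mathbb{D}}\left(\frac{1-|a|^2}{|1-\bar a e^{\i\theta}z|^2}\right)^{(1+p)q/p}|g(e^{\i\theta}z)-g(z)|^q(1-|z|^2)^{q+\alpha}\,\d A(z)=0.$$
Setting $b=e^{-\i\theta}a$ (so $|b|=|a|$) and using the identity \eqref{b} recasts this as the vanishing of
$$\int_{\mathbb{D}}(1-|\psi_b(z)|^2)^{(1+p)q/p}|g(e^{\i\theta}z)-g(z)|^q(1-|z|^2)^{\alpha-q/p}\,\d A(z).$$

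The technical heart of the proof is to extract a pointwise conclusion at $a$ from this integral vanishing. I plan to invoke subharmonicity of $|g(e^{\i\theta}z)-g(z)|^q$ on the Euclidean disk $D(b,(1-|b|)/2)$, where $1-|\psi_b(z)|^2\gtrsim 1$ and $(1-|z|^2)\approx(1-|b|^2)$. Combined with $|D(b,(1-|b|)/2)|\approx(1-|b|^2)^2$ and the identity $q\gamma=\alpha+2-q/p$, this yields
$$|g(e^{\i\theta}a)-g(a)|^q(1-|a|^2)^{q\gamma}\lesssim \mathrm{I\!V}_{\varphi_\theta,q+\alpha}^{p,q}(g\circ\varphi_\theta-g)(a),$$
hence $\lim_{|a|\to 1}|g(e^{\i\theta}a)-g(a)|(1-|a|^2)^{\gamma}=0$ for each $\theta$. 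Getting the exponents to come out exactly right here is the main obstacle; this is the analogue of the estimate \eqref{19} in Theorem \ref{567} but at the level of $g$ rather than $g'$.

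To finish, I observe that the pointwise bound is dominated uniformly in $\theta$ by $2\|g\|_{\mathcal{H}^{\infty,\gamma}}<\infty$ (using the Remark identifying $\mathcal{H}^{\infty,\gamma}=\mathcal{B}^{1+\gamma}$; this requires $\gamma>0$, which should be the relevant regime). Dominated convergence lets me integrate in $\theta$, and expanding $g(z)=\sum_{n\ge 0}a_n z^n$, applying Fubini, the triangle inequality, and $\int_0^{2\pi}(e^{\i n\theta}-1)\,\d\theta=-2\pi$ for $n\ge 1$ yields, exactly as in \eqref{31},
$$0=\lim_{|a|\to 1}\int_0^{2\pi}|g(e^{\i\theta}a)-g(a)|(1-|a|^2)^{\gamma}\,\d\theta\;\gtrsim\;\lim_{|a|\to 1}|g(a)-g(0)|(1-|a|^2)^{\gamma}.$$
Thus $g\in\mathcal{H}^{\infty,\gamma}_0=\mathcal{B}^{1+\gamma}_0$ by the Remark, completing the proof.
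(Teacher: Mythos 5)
Your proposal is correct and follows exactly the route the paper intends: the paper omits this proof entirely, saying only that it is ``similar to the proof of Theorem \ref{567}'', and your argument is precisely that template (sufficiency from Lemma \ref{7g}, necessity via rotation composition operators, the compactness criterion of Corollary \ref{4d}, a subharmonicity estimate in place of the paper's citation of Lemmas 8 and 9 of \cite{ZC}, and the integration-in-$\theta$ power-series trick of \eqref{31}), with the exponents $q+\alpha-(1+p)q/p=\alpha-q/p$ and $\alpha-q/p+2=q\gamma$ checking out. The only caveats are ones you already flag and that the paper itself glosses over: the identification $\mathcal{H}^{\infty,\gamma}_{0}=\mathcal{B}^{1+\gamma}_{0}$ and the dominated-convergence bound need $\gamma>0$ (the statement as written admits $\gamma\le 0$, e.g.\ $p=1$, $q=4$, $\alpha=0$), and Corollary \ref{4d} rests on Theorem \ref{3c}, which is stated only for $1<p\le q$.
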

\begin{proof}
This is similar to the proof of Theorem \ref{567} and hence we omit the details.
\end{proof}

\section{Compact intertwining relation for $M_{g}$ and $C_{\varphi}$ from $\mathcal{H}^{p}$  to $\mathcal{A}_{\alpha}^{q}$}\label{section 5}

In this section, we discuss the compact intertwining relations for the
 multiplication operators $M_{g}$ and composition operators $C_{\varphi}$ from $\mathcal{H}^{p}$  to $\mathcal{A}_{\alpha}^{q}$.
To this end, we first characterize boundedness and compactness of
$$
N_{\varphi, g, h}:=C_{\varphi}M_{g}-M_{h}C_{\varphi},
$$
which is a linear operator from $\mathcal{H}^{p}$ to $\mathcal{A}_{\alpha}^{q}$.
\begin{corollary}\label{14}
Let $1<p \leq q<\infty$ and $\alpha>-1.$ Assume that $u \in H(\mathbb{D})$ and $\varphi \in S(\mathbb{D}).$
\begin{itemize}
\item[(i)] The multiplication operator $M_{u}$ is bounded from $\mathcal{H}^{p}$ into $\mathcal{A}_{\alpha}^{q}$ if and only if
\begin{align}
\sup _{a \in \mathbb{D}} \mathrm{I\!I}_{e^{\mathbf{i} \theta} z, \alpha}^{p, q}(u)(a) \approx \sup _{a \in \mathbb{D}}|u(a)|(1-|a|^{2})^{\gamma}<\infty .
\end{align}
\item[(ii)] The multiplication operator $M_{u}$ is compact from $\mathcal{H}^{p}$ into $\mathcal{A}_{\alpha}^{q}$ if and only if
\begin{align}
\limsup _{|a| \rightarrow 1} \mathrm{I\!I}_{e^{\mathbf{i} \theta} z, \alpha}^{p, q}(u)(a) \approx \limsup _{|a| \rightarrow 1}|u(a)|(1-|a|^{2})^{\gamma}=0,
\end{align}
where $\gamma=(\alpha+2)/q-1/p.$
\end{itemize}
\end{corollary}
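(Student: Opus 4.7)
The plan is to reduce to Theorem \ref{w} and then convert the integral quantity $\mathrm{I\!I}_{e^{\mathbf{i}\theta}z,\alpha}^{p,q}(u)(a)$ into the pointwise quantity $|u(a)|(1-|a|^2)^\gamma$. First, I would observe that $M_u = uC_\varphi$ when $\varphi(z)=z$, and, more generally, for any $\theta\in[0,2\pi]$ the rotation $C_{e^{\mathbf{i}\theta}z}$ is an isometric isomorphism on both $\mathcal{H}^p$ and $\mathcal{A}_\alpha^q$. Writing $M_u = (uC_{e^{\mathbf{i}\theta}z})\circ C_{e^{-\mathbf{i}\theta}z}$, I conclude that $M_u$ is bounded (resp.\ compact) if and only if $uC_{e^{\mathbf{i}\theta}z}$ is, so Theorem \ref{w} immediately yields the first characterization of boundedness (resp.\ compactness) in terms of $\sup_a$ (resp.\ $\limsup_{|a|\to 1}$) of $\mathrm{I\!I}_{e^{\mathbf{i}\theta}z,\alpha}^{p,q}(u)(a)$.

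The core of the argument is then the two-sided pointwise equivalence $\mathrm{I\!I}_{e^{\mathbf{i}\theta}z,\alpha}^{p,q}(u)(a) \asymp |u(a)|^q (1-|a|^2)^{q\gamma}$. By the change of variable $w = e^{\mathbf{i}\theta}z$, which preserves $\mathrm{d}A$ together with the moduli $|1-\bar a e^{\mathbf{i}\theta}z|$ and $1-|z|^2$, I may set $\theta=0$. For the $\lesssim$ direction, set $K = \sup_{z\in\mathbb{D}}|u(z)|(1-|z|^2)^\gamma$; the identity $\alpha - q\gamma = q/p - 2$ (from $\gamma = (\alpha+2)/q - 1/p$) allows me to insert the pointwise bound $|u(z)|^q \le K^q (1-|z|^2)^{-q\gamma}$ into the integrand and conclude
\[
\mathrm{I\!I}_{z,\alpha}^{p,q}(u)(a) \le K^q (1-|a|^2)^{q/p} \int_{\mathbb{D}} \frac{(1-|z|^2)^{q/p-2}}{|1-\bar a z|^{2q/p}}\,\mathrm{d}A(z) \lesssim K^q,
\]
the last step being the standard Forelli--Rudin estimate. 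For the $\gtrsim$ direction I would restrict the integration to the hyperbolic disk $D(a, r(1-|a|))$ for a fixed small $r\in(0,1)$: there $|1-\bar a z| \asymp 1-|a|^2 \asymp 1-|z|^2$, and subharmonicity of $|u|^q$ gives $|u(a)|^q (1-|a|)^2 \lesssim \int_{D(a,r(1-|a|))} |u(z)|^q\,\mathrm{d}A(z)$. Combining these comparisons with $\alpha - q/p + 2 = q\gamma$ yields $\mathrm{I\!I}_{z,\alpha}^{p,q}(u)(a) \gtrsim |u(a)|^q (1-|a|^2)^{q\gamma}$. Taking $\sup_a$ and $\limsup_{|a|\to 1}$ then produces (i) and (ii), respectively.

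The delicate point is the Forelli--Rudin integral: the exponent $q/p - 2$ of $(1-|z|^2)$ must exceed $-1$ for the standard estimate to apply, which holds precisely when $p<q$. At the diagonal $p=q$ the borderline is reached, the integral diverges and the $\lesssim$ direction of the pointwise equivalence actually fails---this is consistent with Lemma \ref{6f}(iii), where the correct $p=q$ condition involves the strictly stronger $\mathcal{BMOA}_p^{1+(\alpha+1)/p}$-type norm rather than a pointwise Bloch-type bound. Hence the main technical issue to sort out is whether the corollary is intended for $p<q$ only, or whether the case $p=q$ requires a separate treatment invoking $\mathcal{VMOA}_p^{1+(\alpha+1)/p}$.
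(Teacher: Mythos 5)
Your argument follows essentially the same route as the paper: identify $M_u=uC_{id}$, invoke Theorem \ref{w} (using that rotations are isometries on both spaces), and then reduce everything to the two-sided pointwise equivalence $\mathrm{I\!I}_{id,\alpha}^{p,q}(u)(a)\asymp |u(a)|^q(1-|a|^2)^{q\gamma}$. The only difference is that the paper obtains this equivalence by citing Lemmas 8 and 9 of \cite{ZC}, whereas you prove it directly via the Forelli--Rudin estimate (upper bound) and subharmonicity of $|u|^q$ on a hyperbolic disk (lower bound); your exponent bookkeeping ($\alpha-q\gamma=q/p-2$ and $\alpha+2-q/p=q\gamma$) is correct, so for $p<q$ your proof is complete and self-contained where the paper's is a citation.

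Your closing concern is well-founded and worth recording: the upper bound genuinely requires $q/p-2>-1$, i.e.\ $p<q$, and the equivalence fails on the diagonal. Indeed, at $p=q$ the condition $\sup_a\mathrm{I\!I}_{id,\alpha}^{p,p}(u)(a)<\infty$ says that $|u|^p(1-|z|^2)^\alpha\,\mathrm{d}A$ is a $1$-Carleson measure, which by Lemma \ref{6f}(iii) corresponds to the strictly smaller space $\mathcal{BMOA}_p^{1+(\alpha+1)/p}$ rather than to the growth condition $\sup_a|u(a)|(1-|a|^2)^{(\alpha+1)/p}<\infty$ (a lacunary series separates the two). The paper's own Theorem \ref{2345}(iii) treats $p=q=2$ via $\mathcal{VMOA}_2^{1+(\alpha+1)/2}$, implicitly conceding the point. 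So the corollary's displayed equivalence should be read as valid only for $p<q$; only the $\gtrsim$ half (hence one implication in each of (i) and (ii)) survives at $p=q$. This is a defect of the statement as printed, not of your proof.
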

\begin{proof}
By Lemma $8$ and Lemma $9$ in \cite{ZC}, we get 
$$\mathrm{I\!I}_{id, \alpha}^{p, q}(u)(a)=\mathrm{I\!I}_{e^{\mathbf{i} \theta} z, \alpha}^{p, q}(u)(a) 
\approx \sup _{a \in \mathbb{D}}|u(a)|(1-|a|^{2})^{\gamma}.$$ Note that
$M_{u}=uC_{id}$ where $id(z)\equiv z$ is the identity map of $\mathbb{D}$. Hence by setting $\varphi(z)=e^{\mathbf{i} \theta} z$ in Theorem \ref{w}, 
we can prove (i) and (ii).
\end{proof}

\begin{corollary}\label{15}
Let $1<p \leq q<\infty$ and $\alpha>-1$. Assume that $\varphi \in$ $S(\mathbb{D})$ and $g$, $h$ $\in$ $H(\mathbb{D})$.
\begin{itemize}
\item[(i)] The operator $N_{\varphi, g, h}$ is a bounded operator from $\mathcal{H}^{p}$ into $\mathcal{A}_{\alpha}^{q}$ if and only if
\begin{align}
\sup _{a \in \mathbb{D}} \mathrm{I\!I}_{\varphi, \alpha}^{p, q}\left(g \circ \varphi-h\right)(a)<\infty .
\end{align}
\item[(ii)] The operator $N_{\varphi, g, h}$ is a compact operator from $\mathcal{H}^{p}$ into $\mathcal{A}_{\alpha}^{q}$ if and only if $N_{\varphi, g, h}$ is bounded and
\begin{align}
\lim _{|a| \rightarrow 1} \mathrm{I\!I}_{\varphi, \alpha}^{p, q}\left(g \circ \varphi-h\right)(a)=0 .
\end{align}
\end{itemize}
\end{corollary}
\begin{proof}
We note that
$$
\left(C_{\varphi} M_{g}-M_{h} C_{\varphi}\right) f(z) = f(\varphi(z))(g(\varphi(z))-h(z)).
$$
Thus
$$
\begin{aligned}
\|N_{\varphi, g, h} f\|_{\mathcal{A}_{\alpha}^{q}}^{q}
&=  \int_{\mathbb{D}}\left|(g \circ \varphi-h)(z)\right|^{q}|f(\varphi(z))|^{q} \mathrm{~d} A_{\alpha}(z) \\
&=\left\|(g \circ \varphi-h) C_{\varphi}(f)\right\|_{\mathcal{A}_{\alpha}^{q}}^{q} .
\end{aligned}
$$
Hence, by Theorem \ref{w}, we have (i) and (ii).
\end{proof}

We can now state and prove the main result of this section.

\begin{theorem}\label{2345}
Let $1<p,q<\infty$, $g \in H(\mathbb{D}),-1<\alpha<\infty,$ and $\gamma=\frac{\alpha+2}{q}-\frac{1}{p}$. Then the following three assertions hold:
\begin{itemize}
\item[(i)]  If $p<q$ and $\gamma>0$, then $M_{g}: \mathcal{H}^{p} \rightarrow \mathcal{A}_{\alpha}^{q}$ 
compactly intertwines all composition operators $C_{\varphi}$ which are bounded both on $\mathcal{H}^{p}$
 and $\mathcal{A}_{\alpha}^{q}$ if and only if $g$ $\in$ $\mathcal{B}^{1+\gamma}_{0}$.
\item[(ii)] If $p<q$ and $\gamma=0$, then $M_{g}: \mathcal{H}^{p} \rightarrow \mathcal{A}_{\alpha}^{q}$ 
compactly intertwines all composition operators $C_{\varphi}$ which are bounded both on $\mathcal{H}^{p}$ 
and $\mathcal{A}_{\alpha}^{q}$ if and only if $g$ is a complex scalar.
    \item[(iii)] If $p=q=2$, then $M_{g}: \mathcal{H}^{2} \rightarrow \mathcal{A}_{\alpha}^{2}$ compactly
     intertwines all composition operators $C_{\varphi}$ which are bounded both on $\mathcal{H}^{2}$ and 
     $\mathcal{A}_{\alpha}^{2}$ if and only if $g$ $\in$$ \mathcal{V M O A}_{2}^{1+(\alpha+1) / 2}.$
\end{itemize}
\end{theorem}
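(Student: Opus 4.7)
The strategy is to follow the template of Theorem~\ref{567}, with Corollary~\ref{15} replacing Theorem~\ref{yyy} and Lemma~\ref{6f} replacing Lemma~\ref{181}. Sufficiency is immediate in each of (i)--(iii): if $g$ lies in the little-oh class named there, the corresponding clause of Lemma~\ref{6f} makes $M_g:\mathcal{H}^p\to\mathcal{A}_\alpha^q$ itself compact, and hence $C_\varphi M_g-M_g C_\varphi\in\mathcal{K}(\mathcal{H}^p,\mathcal{A}_\alpha^q)$ for every $C_\varphi$ bounded on both spaces.

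For necessity I specialize to the rotation composition operators $\varphi(z)=e^{\i\theta}z$, which act isometrically on both $\mathcal{H}^p$ and $\mathcal{A}_\alpha^q$ and are invertible with $C_{\varphi^{-1}}$ bounded. Writing $h_\theta:=g(e^{\i\theta}\cdot)-g$, the identity
$$
N_{\varphi,g}f(z)=(g(e^{\i\theta}z)-g(z))\,f(e^{\i\theta}z)=M_{h_\theta}C_\varphi f(z)
$$
converts the hypothesis into the assertion that $M_{h_\theta}:\mathcal{H}^p\to\mathcal{A}_\alpha^q$ is compact for every $\theta\in[0,2\pi]$. The boundedness of $M_g$ presupposed by the hypothesis also yields, via Lemma~\ref{6f}(i)--(iii), the preliminary membership $g\in\mathcal{B}^{1+\gamma}$, $g\in\mathcal{H}^\infty$, or $g\in\mathcal{BMOA}_2^{1+(\alpha+1)/2}$ in cases (i), (ii), (iii) respectively, which will furnish the uniform-in-$\theta$ dominating bounds below.

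Case~(ii) follows at once: by Lemma~\ref{6f}(v), $h_\theta\equiv 0$ for every $\theta$, so $g(e^{\i\theta}z)=g(z)$ on $\D$, forcing $g$ to be a complex scalar. For cases (i) and (iii), the bridge is the Fourier-averaging identity $-2\pi g'(z)=\int_0^{2\pi}h_\theta'(z)\,\mathrm d\theta$, obtained from $g(z)=\sum_{n\geq 0}a_nz^n$ together with $\int_0^{2\pi}(e^{\i n\theta}-1)\,\mathrm d\theta=-2\pi$ for $n\geq 1$. The triangle inequality gives
$$
|g'(z)|\lesssim\int_0^{2\pi}|h_\theta'(z)|\,\mathrm d\theta,
$$
and Cauchy--Schwarz gives
$$
|g'(z)|^2\lesssim\int_0^{2\pi}|h_\theta'(z)|^2\,\mathrm d\theta.
$$
In case~(i), Lemma~\ref{6f}(iv) delivers $\lim_{|z|\to 1}(1-|z|^2)^{1+\gamma}|h_\theta'(z)|=0$ for every $\theta$, with a $\theta$-independent dominating bound $\|h_\theta\|_{\mathcal{B}^{1+\gamma}}\leq 2\|g\|_{\mathcal{B}^{1+\gamma}}$ following from the rotation-invariance of the seminorm. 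Dominated convergence then yields $\lim_{|z|\to 1}(1-|z|^2)^{1+\gamma}|g'(z)|=0$, so $g\in\mathcal{B}_0^{1+\gamma}$. In case~(iii), Lemma~\ref{6f}(vi) gives
$$
\lim_{|a|\to 1}\int_\D|h_\theta'(z)|^2(1-|z|^2)^{\alpha+1}(1-|\psi_a(z)|^2)\,\mathrm dA(z)=0
$$
for every $\theta$, with the uniform bound $\|h_\theta\|_{\mathcal{BMOA}_2^{1+(\alpha+1)/2}}\leq 2\|g\|_{\mathcal{BMOA}_2^{1+(\alpha+1)/2}}$ again from the rotation-invariance of that seminorm. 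Multiplying the Cauchy--Schwarz bound by $(1-|z|^2)^{\alpha+1}(1-|\psi_a(z)|^2)$, integrating over $\D$, exchanging integrals by Fubini, and passing the limit $|a|\to 1$ inside via dominated convergence yields the corresponding little-oh statement for $g'$, so $g\in\mathcal{VMOA}_2^{1+(\alpha+1)/2}$.

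The main technical point is the verification of the uniform-in-$\theta$ dominating bounds needed to interchange limits with the $\theta$-integration; in each case this rests on the rotation-invariance of the appropriate ``big-O'' function space and the preliminary boundedness of $M_g$ deduced from the compactness hypothesis.
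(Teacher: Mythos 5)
Your proposal is correct in substance and follows the same skeleton as the paper's proof (which is itself mostly deferred to the argument of Theorem~\ref{567}): restrict to rotations $\varphi(z)=e^{\i\theta}z$, recover $g'$ from the rotated differences via $\int_0^{2\pi}(e^{\i n\theta}-1)\,\mathrm{d}\theta=-2\pi$, and pass the limit through the $\theta$-integral using the rotation-invariant big-O bound. Where you genuinely diverge is in how you extract information from the compactness of the commutator: the paper routes this through the integral-operator characterizations of Corollaries~\ref{14} and~\ref{15} (and, for (iii), Theorem~3.1 of \cite{LO}), whereas you observe that for a rotation $N_{\varphi,g}=M_{h_\theta}C_\varphi$ with $C_\varphi$ invertible and isometric on both spaces, so compactness of the commutator is \emph{equivalent} to compactness of $M_{h_\theta}:\mathcal H^p\to\mathcal A_\alpha^q$, which Lemma~\ref{6f} then converts directly into the little-oh condition on $h_\theta$. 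This is cleaner: it bypasses Corollaries~\ref{14}--\ref{15} and the external reference entirely, and in case (ii) it gives $h_\theta\equiv0$ outright from Lemma~\ref{6f}(v) instead of the paper's limit-plus-uniqueness-theorem argument. One slip to fix: your blanket claim that sufficiency in (i)--(iii) follows because ``the corresponding clause of Lemma~\ref{6f} makes $M_g$ itself compact'' fails in case (ii), since for a nonzero scalar $c$ the operator $M_c$ is \emph{not} compact from $\mathcal H^p$ to $\mathcal A_\alpha^q$ (Lemma~\ref{6f}(v) forces $g\equiv0$ for compactness); sufficiency there holds for the different and simpler reason that $C_\varphi M_c-M_cC_\varphi=0$ identically. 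With that one-line correction the proof is complete.
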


\begin{proof}
The proof of (i) is similar to the proof of Theorem \ref{567}. Next we consider (ii). Sufficiency is obvious, and we just prove the necessity.
Putting $\varphi(z)=e^{\mathbf{i} \theta} z$ for $\theta \in$ $[0,2 \pi]$, by Corollary \ref{15} (ii), we obtain
\begin{align}\label{}
\begin{aligned}
0=&\lim _{|a| \rightarrow 1} \mathrm{I\!I}_{\varphi, \alpha}^{p, q}\left(g \circ \varphi-g\right)(a)\\
=&\lim _{|a| \rightarrow 1}\int_{\mathbb{D}}\left(\frac{1-|a|^{2}}{|1-\bar{a} e^{\mathbf{i} \theta}
	z|^{2}}\right)^{q/p}\left|g(e^{\mathbf{i} \theta} z)-g(z)\right|^{q} (1-|z|^{2})^{\alpha}\mathrm{~d} A(z)\\
=&\lim _{|a| \rightarrow 1}\mathrm{I\!I}_{id, \alpha}^{p, q}\left(g
\circ \varphi-g\right)(a)\\
\approx &\lim _{|a| \rightarrow 1}\left|g(e^{\mathbf{i} \theta} a)-g(a)\right|(1-|a|^{2})^{0}
\end{aligned}
\end{align}
where the last approximation follows from the proof of Corollary \ref{14}. Then the uniqueness
theorem and $0 \approx \lim _{|a| \rightarrow 1}\left|g\left(e^{\mathbf{i} \theta} a\right)
-g(a)\right|$ give that $g \equiv$ constant.

Finally, we prove (iii). If $g$ $\in$ $\mathcal{VMOA}_2^{1+(\alpha+1)/2}$, we can see $M_{g}$ is compact from $\mathcal{H}^{2}$ to $\mathcal{A}^2_\alpha$ by Lemma \ref{6f} (vi). Hence
$$
\left.C_{\varphi}\right|_{\mathcal{A}^{2}_\alpha} \left.M_{g}\right|_{\mathcal{H}^{2} \rightarrow \mathcal{A}^{2}_{\alpha}}-\left.M_{g}\right|_{\mathcal{H}^{2} \rightarrow \mathcal{A}^{2}_\alpha} \left.C_{\varphi}\right|_{\mathcal{H}^{2}}
$$
is compact for every $C_{\varphi}$ bounded on $\mathcal{H}^{p}$ and  $\mathcal{A}^2_\alpha$.

Conversely, by (iii) of Lemma \ref{6f}, $M_{g}$ is bounded from $\mathcal{H}^{2}$ to $\mathcal{A}^2_\alpha$
if and only if $g$ $\in$ $\mathcal{BMOA}_2^{1+(\alpha+1)/2}$. Putting $\varphi(z)=e^{\mathbf{i} \theta} z$ for $ \theta \in$ $[0,2 \pi]$, by Corollary \ref{15} (ii), we have 
\begin{align}\label{up}
\begin{aligned}
0=&\lim _{|a| \rightarrow 1} \mathrm{I\!I}_{\varphi, \alpha}^{2, 2}\left(g \circ \varphi-g\right)(a)\\
=&\lim _{|a| \rightarrow 1}\int_{\mathbb{D}}\frac{1-|a|^{2}}{|1-\bar{a} e^{\mathbf{i} \theta}
	z|^{2}}\left|g(e^{\mathbf{i} \theta} z)-g(z)\right|^{2} (1-|z|^{2})^{\alpha}\mathrm{~d} A(z)\\
 =&\lim _{|a| \rightarrow 1}\int_{\mathbb{D}}(1-|\psi_{a}(z)|^{2})\left|g(e^{\mathbf{i} \theta} z)-g(z)\right|^{2}(1-|z|^{2})^{\alpha-1}  \mathrm{~d} A(z)\\
=&\lim _{|a| \rightarrow 1} \int_{\mathbb{D}}(1-|\psi_{a}(z)|^{2})\left|e^{\mathbf{i} \theta} g^{\prime}\left(e^{\mathbf{i} \theta} z\right)-g^{\prime}(z)\right|^{2}(1-|z|^{2})^{\alpha+1} \mathrm{~d} A(z),
\end{aligned}
\end{align} where the last identity follows from Theorem 3.3 (2) of \cite{JR2} (using $n=0,1$, $p=2$, $q=\alpha+1$, and $s=1$). To use the dominated convergence theorem below, we first need to obtain an upper bound for \eqref{up} as follows
$$
\begin{aligned}
&\int_{\mathbb{D}}(1-|\psi_{a}(z)|^{2})\left|e^{\mathbf{i} \theta} g^{\prime}\left(e^{\mathbf{i} \theta} z\right)-g^{\prime}(z)\right|^{2} (1-|z|^{2})^{\alpha+1}\mathrm{~d} A(z) \\
 \leq &\int_{\mathbb{D}}\left(1-|\psi_{a}(e^{\mathbf{i} \theta }z)|^{2}\right)\left| g^{\prime}\left(e^{\mathbf{i} \theta} z\right)\right|^{2}(1-|z|^{2})^{\alpha+1} \mathrm{~d} A(z) \\ &+\int_{\mathbb{D}}\left(1-|\psi_{a}(z)|^{2}\right)\left|g^{\prime}(z)\right|^{2}(1-|z|^{2})^{\alpha+1} \mathrm{~d} A(z)  \\
\leq& 2\|g\|_{\mathcal{B M O A}_2^{1+(\alpha+1)/2}}<\infty,
\end{aligned}
$$
where the last line follows from \eqref{40}. Hence, \eqref{up} has an upper bound independent of $\theta$.

We also write $g(z)=\sum_{n=0}^{\infty} a_{n} z^{n}$, and integrate the right-hand side of \eqref{up} with respect to $\theta$ from 0 to $2 \pi$ as follows
$$
\begin{aligned}
0 &=\int_{0}^{2 \pi} \lim _{|a| \rightarrow 1}\int_{\mathbb{D}}(1-|\psi_{a}(z)|^{2})\left|e^{\mathbf{i} \theta} g^{\prime}\left(e^{\mathbf{i} \theta} z\right)-g^{\prime}(z)\right|^{2}(1-|z|^{2})^{\alpha+1} \mathrm{~d} A(z) \mathrm{d} \theta \\
&=\lim _{|a| \rightarrow 1} \int_{0}^{2 \pi}\int_{\mathbb{D}}(1-|\psi_{a}(z)|^{2})\left|e^{\mathbf{i} \theta} g^{\prime}\left(e^{\mathbf{i} \theta} z\right)-g^{\prime}(z)\right|^{2}(1-|z|^{2})^{\alpha+1} \mathrm{~d} A(z) \mathrm{d} \theta \\
&=\lim _{|a| \rightarrow 1} \int_{\mathbb{D}}(1-|\psi_{a}(z)|^{2})\int_{0}^{2 \pi}\left|\sum_{n=1}^{\infty} n a_{n} z^{n-1}\left(e^{\mathrm{i} n \theta}-1\right)\right|^{2} \mathrm{d} \theta (1-|z|^{2})^{\alpha+1}\mathrm{~d} A(z)\\
& \geq \lim _{|a| \rightarrow 1} \int_{\mathbb{D}}(1-|\psi_{a}(z)|^{2})\left|\sum_{n=1}^{\infty} n a_{n} z^{n-1} \int_{0}^{2 \pi}\left(e^{\mathrm{i} n \theta}-1\right) \mathrm{d} \theta\right|^{2} (1-|z|^{2})^{\alpha+1}\mathrm{~d} A(z)\\
&\approx \lim _{|a| \rightarrow 1}\int_{\mathbb{D}}(1-|\psi_{a}(z)|^{2})\left|g^{\prime}(z)\right|^{2}(1-|z|^{2})^{\alpha+1}\mathrm{~d} A(z),
\end{aligned}
$$
where the dominated convergence theorem and Fubini's theorem are applied to the second and third lines, respectively. Thus, by \eqref{c1}, we obtain $g \in$ $\mathcal{V M O A}_2^{1+(\alpha+1)/2}$.

\end{proof}

\begin{remark}
From our proof, we observe that composition operators, especially the rotation composition operators, play a crucial role in the study of inner derivations, and we expect that they will be useful for their further study.
\end{remark}

\section*{Acknowledgments}

H. Arroussi was supported by the European Union’s Horizon 2022 research and innovation programme under the Marie Skłodowska-Curie Grant Agreement No.\ 101109510. C. Tong and Z. Yuan were supported in part by the National Natural Science Foundations of China (Grant Nos.\ 12171136 and 12411530045). J. Virtanen was supported in part by Engineering and Physical Sciences Research Council (EPSRC) grants EP/X024555/1 and EP/Y008375/1.


\begin{thebibliography}{99}
\bibitem{PJ1}
P. Ahern and J. Bruna, Maximal and area integral characterizations of Hardy-Sobolev spaces in the unit ball of $\mathbb{C}^{n}$. Rev. Mat. Iberoamericana 4(1), (1988), 123-153.
 
\bibitem{AJ}
A. Aleman and J. A. Cima, An integral operator on $\mathcal{H}^{p}$ and Hardy’s inequality, J. Anal. Math. 85 (2001), 157-176.

\bibitem{A} H. Arroussi, Weighted composition operators on {B}ergman spaces {$A^p_\omega$}, Math. Nachr. 295 (2022), no. 4, 631--656.

\bibitem{AGV} H. Arroussi, H. Gissy, and J. A. Virtanen,
Generalized Volterra type integral operators on large Bergman spaces,
Bull. Sci. Math. 182 (2023), Paper No.~103226, 45 pp.

\bibitem{ASX}
R. Aulaskari, D. Stegenga, and J. Xiao, Some subclasses of $\mathcal{BMOA}$ and their characterization in terms of Carleson measures, Rocky Mountain J. Math. 26 (1996), 485-506.




\bibitem{BJ}
O. Blasco and H. Jarchow, A note on Carleson measures for Hardy spaces, Acta Sci. Math.
(Szeged) 71(2005), 371-389.

\bibitem{AC}
A. Calder\'{o}n, Commutators of singular integral operators, Proc. Nat. Acad. Sci. USA, 53 (1965), 1092-1099.

\bibitem{Ca} J. W. Calkin, Two-sided ideals and congruences in the ring of bounded operator in Hilbert space, Ann. Math., 42(2)(1941),
839-873.

\bibitem{LC1}
L. Carleson, An interpolation problem for bounded analytic functions, Amer. J. Math. 80 (1958), 921-930.

\bibitem{LC2}
L. Carleson, Interpolations by bounded analytic functions and the corona problem, Ann.  Math. 76 (1962), 547-559.

\bibitem{Cowen}  C. C. Cowen and B. D. MacCluer, Composition Operators on Spaces of Analytic Functions (CRC Press, Boca Raton, FL, 1995).

\bibitem{ZC}
 \v{Z}. \v{C}u\v{c}kovi\'{c} and R. Zhao, Weighted composition operators between different weighted Bergman spaces and different Hardy spaces, Illinois J. Math. 51(2) (2007), 479-498.

\bibitem{DP}
P. Duren, Theory of $H^{p}$ spaces. Academic Press, New York(2000).

\bibitem{GJ}
 J. Garnett, Bounded analytic functions, Academic Press, New York, (1981).

\bibitem{GLW}
 M. Gong, Z. Lou, Z. Wu, Area operators from $\mathcal{H}^{p}$ spaces to $\mathcal{L}^{q}$ spaces. Sci. China Math. 53(2), (2010), 357-366.

\bibitem{BH}
B. Korenblum, H. Hedenmalm, K. Zhu, Theory of Bergman spaces, Springer, 2000.


\bibitem{LLR1}
X. Liu, Z. Lou, R. Zhao, A new characterization of Carleson measures on the unit ball of $\mathbb{C}^{n}$, Integr. Equ. Oper. Theory 93, 51 (2021). https://doi.org/10.1007/s00020-021-02667-z.

\bibitem{Lue}
D. Luecking, Forward and reverse Carleson inequalities for functions in Bergman spaces and their derivatives. Amer. J. Math. 107 (1985), no. 1, 85--111.

\bibitem{PL}
X. Lv, J. Pau, Tent Carleson measures for Hardy spaces, J. Funct. Anal., 287, (2024), 110459.

\bibitem{SJA}
S. Miihkinen, J. Pau, A. Per\"{a}l\"{a} and M. F. Wang, Volterra type integration operators from Bergman spaces to Hardy spaces, J. Funct. Anal. 279(32), (2020), 108564.

\bibitem{PPP}
J. Pel\'{a}ez, Compact embedding derivatives of Hardy spaces into Lebesgue spaces. Proc. Amer. Math. Soc.,  144(4), (2015), 187-190.

\bibitem{PR}
J. Pel\'{a}ez, J. R\"{a}tty\"{a} and K. Sierra, Embedding Bergman spaces into tent spaces, Math. Z., 281(2015), 1215-1237.
  
\bibitem{PXJ}
R. Peng, X. L. Xing, L. Y. Jiang, Pointwise multiplication operators from Hardy spaces to weighted Bergman spaces in the unit ball of $\mathbb C^n$. Act. Math. Sci, 39(4) (2019), 1003-1016.

\bibitem{JR2}
J. R\"{a}tty\"{a}, {$n$}-th derivative characterisations, mean growth of derivatives and {$F(p,q,s)$}. Bull. Austral. Math. Soc., 68(3), (2003), 405--421.

\bibitem{JR}
J. R\"{a}tty\"{a}, Integration operator acting on Hardy and weighted Bergman spaces. Bull. Australian Math. Soc, 75(3), (2007), 431--446.

\bibitem{JS}
J. H. Shapiro, Composition operators and classical function theory, Spriger-Verlag, 1993.

\bibitem{ST}
V. S. Shulman and Y. V. Turovskii, Topological radicals and joint
spectral radius, Funktsional. Anal. i Prilozhen. 46(2012), 61-82;
translation in Funct. Anal. Appl., 46(2), (2012), 287-304.

\bibitem{SW}
I. M. Singer and J. Wermer, Derivations on commutative normed
algebras, Math. Ann., 129(1955), 260-264.

\bibitem{AR}
A. Siskakis and R. Zhao, A Volterra type operator on spaces of analytic functions. Function spaces (Edwardsville, IL, 1998). Contemp. Math., 232(2), (1999), 299-311.

\bibitem{ES}
E. Stein and R. Shakarchi, Complex analysis. Princeton university Press, Princeton and Oxford(2007).

\bibitem{TYZ}C. Tong, C. Yuan, Z. Zhou, Compact intertwining relations for composition
operators on $\mathcal{H}^\infty$ and the Bloch space, New York J. Math., 24, (2018), 611-629.

\bibitem{TZ1} C. Tong, Z. Zhou, Intertwining relations for Volterra operators on the Bergman space,
  Illinois J. Math., 57(1), (2013), 195-211.

\bibitem{TZ2} C. Tong, Z. Zhou, Compact intertwining relations for composition operators
between the weighted Bergman spaces and the weighted Bloch spaces, J. Korean Math.
Soc., 51(1), (2014), 125-135.

\bibitem{ZW}
Z. Wu, Area operator on Bergman spaces, Sci. China Ser. A 49 (2006), 987-1008.

\bibitem{Z1}
R. Zhao, On a general family of function spaces. Ann. Acad. Sci. Fenn. Math. Diss. 105 (1996).

\bibitem{Z2}
R. Zhao, On $\alpha$-Bloch functions and $\mathcal{V M O A}$. Acta Math. Sci. 16 (1996), 349-360.

\bibitem{ZRH}
R. Zhao, Pointwise multipliers from weighted Bergman spaces and Hardy spaces to weighted Bergman spaces.  Ann. Acad. Sci. Fenn. Math, 29(1) (2004), 139-150.

\bibitem{Zhu1}
 K. Zhu, Bloch type spaces of analytic functions, Rocky Mountain J. Math. 23(3) (1993), 1143-1177.
 
\bibitem{Zhu}
K. Zhu, Spaces of Holomorphic Functions in the Unit Ball, Grad. Texts in Math., Springer-Verlag, New York, 2005.

\end{thebibliography}
\end{document}